\newcommand{\Z}{\ensuremath{\mathbb{Z}}}
\newcommand{\R}{\ensuremath{\mathbb{R}}}
\newcommand{\CC}{\ensuremath{\mathbb{C}}}
\newcommand{\gr}{\operatorname{gr}}
\newcommand{\Aut}{\operatorname{Aut}}
\newcommand{\Tr}{\operatorname{tr}}
\newcommand{\Sym}{\operatorname{Sym}}
\newcommand{\dd}{\mathop{}\!\mathrm{d}}
\newcommand{\lrangle}[1]{\ensuremath{\langle #1 \rangle}}
\newcommand{\Stab}{\ensuremath{\mathrm{Stab}}}
\newcommand{\identity}{\ensuremath{\mathrm{id}}}
\newcommand{\Hom}{\operatorname{Hom}}
\newcommand{\iHom}{\ensuremath{\mathrm{R}\mathcal{H}{om}}}
\newcommand{\End}{\operatorname{End}}
\newcommand{\rightiso}{\ensuremath{\stackrel{\sim}{\rightarrow}}}
\newcommand{\leftiso}{\ensuremath{\stackrel{\sim}{\leftarrow}}}
\newcommand{\Rder}{\operatorname{R}\!}	
\newcommand{\Lder}{\operatorname{L}\!}	
\newcommand{\Image}{\operatorname{im}}
\newcommand{\dotimes}[1]{\ensuremath{\underset{#1}{\otimes}}}
\newcommand{\Lie}{\operatorname{Lie}}
\newcommand{\Ad}{\operatorname{Ad}}
\newcommand{\Spec}{\operatorname{Spec}}
\newcommand{\Gm}{\ensuremath{\mathbb{G}_\mathrm{m}}}
\newcommand{\Ga}{\ensuremath{\mathbb{G}_\mathrm{a}}}
\newcommand{\utimes}[1]{\ensuremath{\overset{#1}{\times}}}
\newcommand{\dtimes}[1]{\ensuremath{\underset{#1}{\times}}}
\newcommand{\Supp}{\operatorname{Supp}}
\newcommand{\ana}{\ensuremath{\mathrm{an}}}	
\newcommand{\GL}{\operatorname{GL}}
\theoremstyle{plain}
\newtheorem{proposition}{Proposition}
\newtheorem{lemma}[proposition]{Lemma}
\newtheorem{theorem}[proposition]{Theorem}
\newtheorem{corollary}[proposition]{Corollary}
\theoremstyle{definition}
\newtheorem{definition}[proposition]{Definition}
\newtheorem{definition-theorem}[proposition]{Definition--Theorem}
\newtheorem{definition-proposition}[proposition]{Definition--Proposition}
\newtheorem{remark}[proposition]{Remark}
\newtheorem{example}[proposition]{Example}
\theoremstyle{definition}
\theoremstyle{plain}
\numberwithin{equation}{section}
\numberwithin{proposition}{section}
\numberwithin{conj}{section}	
\newcommand{\cate}[1]{\ensuremath{\mathsf{#1}}}	
\newcommand{\dcate}[1]{\ensuremath{\text{-}\mathsf{#1}}}	
\newcommand{\Loc}{\ensuremath{\mathrm{Loc}}}		
\renewcommand{\emptyset}{\ensuremath{\varnothing}}	
\title{On the regularity of $D$-modules generated by relative characters}
\author{Wen-Wei Li}
\date{}
\renewcommand{\l@section}{\@dottedtocline{1}{1.5em}{2.0em}}
\renewcommand{\l@subsection}{\@dottedtocline{2}{4.0em}{3.0em}}
\begin{document}

\maketitle

\begin{abstract}
	Following the ideas of Ginzburg, for a subgroup $K$ of a connected reductive $\mathbb{R}$-group $G$ we introduce the notion of $K$-admissible $D$-modules on a homogeneous $G$-variety $Z$. We show that $K$-admissible $D$-modules are regular holonomic when $K$ and $Z$ are absolutely spherical. This framework includes: (i) the relative characters attached to two spherical subgroups $H_1$ and $H_2$, provided that the twisting character $\chi_i$ factors through the maximal reductive quotient of $H_i$, for $i = 1, 2$; (ii) localization on $Z$ of Harish-Chandra modules; (iii) the generalized matrix coefficients when $K(\mathbb{R})$ is maximal compact. This complements the holonomicity proven by Aizenbud--Gourevitch--Minchenko. The use of regularity is illustrated by a crude estimate on the growth of $K$-admissible distributions which based on tools from subanalytic geometry.
\end{abstract}


\tableofcontents

\section{Introduction}\label{sec:intro}

Let $G$ be a connected reductive group over $\R$ and denote its opposite group by $G^{\mathrm{op}}$. Differential equations with regular singularities have played an important role in representation theory of the Lie group $G(\R)$. One significant example is Harish-Chandra's study of invariant eigendistributions on $G(\R)$, which includes the character
\[ f \mapsto \Theta_\pi(f) := \Tr \pi(f), \quad f \in C^\infty_c(G(\R)) \]
of an SAF representation $\pi$ of $G(\R)$ as a typical case. Our terminology of SAF representation follows \cite{BK14}, meaning smooth admissible Fréchet of moderate growth. Another example is the study of asymptotics of the matrix coefficients
\[ \lrangle{\check{v}, \pi(\cdot) v} \in C^\infty(G(\R)), \quad v \in V_\pi, \; \check{v} \in V_{\check{\pi}} \]
of these representations, as exemplified by \cite{CM82}; here $V_\pi$ stands for the underlying Fréchet space, $\check{\pi}$ for the contragredient representation, and $\lrangle{\cdot, \cdot}$ for the canonical pairing.

Generalizing the characters or matrix coefficients to the relative setting, one can also consider similar distributions on $Z(\R)$, where $Z$ is an $\R$-variety with $Z(\R) \neq \emptyset$, homogeneous under right $G$-action, satisfying finiteness condition under some subgroup $K \subset G$ and the center $\mathcal{Z}(\mathfrak{g})$ of $U(\mathfrak{g})$. Of course, $Z$ and $K$ must be subject to some geometric conditions. It turns out that \emph{sphericity} is a reasonable requirement. In this article, we say $Z$ is spherical if $Z_{\CC} := Z \times_{\R} \CC$ has an open dense orbit under any Borel subgroup of $G_{\CC} := G \times_{\R} \CC$, and we say $K \subset G$ is spherical if the homogeneous variety $K \backslash G$ is; this is also known as being \emph{absolutely spherical}. We single out two motivating families of such distributions.
\begin{enumerate}
	\item The notion of matrix coefficients of $\pi$ generalizes to the relative case: given
	\[ \eta \in \EuScript{N}_\pi := \Hom_{G(\R)}(\pi, C^\infty(Z(\R))) \quad \text{(continuous Hom)}, \]
	the space $\eta(V_\pi)$ consists of $\mathcal{Z}(\mathfrak{g})$-finite $C^\infty$-functions on $Z(\R)$. Let $K = G^\theta$ where $\theta$ is a Cartan involution of $G$, so that $K(\R)$ is maximal compact in $G(\R)$. If we consider only $K(\R)$-finite vectors in $\pi$, the generalized matrix coefficients are $\mathfrak{k}$-finite as well. These coefficients are the subject matter of relative harmonic analysis over $\R$; see \cite{KKS17} and the references therein. Unsurprisingly, differential equations with regular singularities entered there.
	
	Note that the space $\EuScript{N}_\pi$ differs from that in \cite[\S 4.1]{Li18} where one considered $C^\infty$ half-densities instead.
	
	\item Let $H_i \subset G$ be spherical subgroups and $\chi_i: H_i(\R) \to \CC^\times$ be smooth characters ($i = 1, 2$). The \emph{relative characters} are certain $\mathcal{Z}(\mathfrak{g})$-finite distributions on $G(\R)$ which are left $(H_1, \chi_1)$-equivariant and right $(H_2, \chi_2)$-equivariant. Specifically, Let $\phi_1$ (resp.\ $\phi_2$) be a continuous $(H_1(\R), \chi_1)$-equivariant (resp.\ $(H_2(\R), \chi_2^{-1})$-equivariant) linear functional of $V_\pi$ (resp.\ $V_{\check{\pi}}$). The corresponding relative character is
	\[ \Theta_{\phi_1, \phi_2}: f \mapsto \lrangle{\phi_1, \pi(f) \phi_2 }, \quad f \in C^\infty_c(G(\R)). \]
	They appear in the local Archimedean components in the spectral side of relative trace formula. Endowing $Z := G$ with the right $G^{\mathrm{op}} \times G$-action $x \xmapsto{(a,b)} axb$ and taking $K := H_1^{\mathrm{op}} \times H_2$, we may regard relative characters as $\mathcal{Z}(\mathfrak{g}^{\mathrm{op}} \times \mathfrak{g})$-finite $(K(\R), (\chi_1, \chi_2^{-1}))$-equivariant distributions on $Z(\R)$, thereby fitting into the previous framework.
\end{enumerate}

The modern theory of algebraic differential systems is phrased in terms of $\mathscr{D}$-modules. Any distribution on $Z(\R)$ generates a $\mathscr{D}_Z$-module, and taking complexification yields a $\mathscr{D}_{Z_{\CC}}$-module. The \emph{regular holonomic} $\mathscr{D}$-modules generalize the systems with regular singularities, and they are related to perverse sheaves on complex analytic manifolds via the Riemann--Hilbert correspondence. For example, Harish-Chandra's differential system for eigendistributions are studied in \cite{HK84} from this perspective. In the recent work \cite{AGM16}, the relative characters are shown to be holonomic. The matrix coefficients in the group case are also related to the wonderful compactifications in \cite{BZG19} using the language of $\mathscr{D}$-modules.

\subsection*{Main results}
Let $Z$ be a spherical homogeneous $G$-variety, $Z(\R) \neq \emptyset$ and $K \subset G$ be a spherical subgroup as alluded to above. We call a $\mathscr{D}_Z$-module regular holonomic if its complexification is. The aim of this article is to show that a large class of $\mathscr{D}_Z$-modules with suitable equivariant or monodromic structures are regular holonomic. This includes the $\mathscr{D}_Z$-modules generated by
\begin{enumerate}
	\item the relative characters $\Theta_{\phi_1, \phi_2}$ (with $Z = G$ and $K = H_1^{\mathrm{op}} \times H_2 \subset G^{\mathrm{op}} \times G$), assuming that the differential of $\chi_i$ factors through the maximal reductive quotient of $\mathfrak{h}_i$, for $i=1,2$;
	\item the $K(\R)$-finite generalized matrix coefficients (with $K = G^\theta$) on $Z(\R)$.
\end{enumerate}
This strengthens the holonomicity of relative characters proven in \cite{AGM16}. Specifically, in Theorem \ref{prop:L-regularity} and Corollary \ref{prop:admissible-regular}, we will prove the regularity for \emph{$K$-admissible} $\mathscr{D}_Z$-modules, as explicated below.

Let $K \subset G$ be a subgroup. We say a character $\chi: \mathfrak{k} \to \CC$ between Lie algebras is \emph{reductive} if $\chi$ factors through the maximal reductive quotient of $\mathfrak{k}$; we say a smooth character $K(\R) \to \CC^\times$ is reductive if its differential is reductive. A $\mathscr{D}_Z$-module $\mathcal{M}$ is said to be $K$-admissible (Definition \ref{def:K-admissible}) if it is generated by a $D_Z$-module $M$, where $D_Z := \Gamma(Z, \mathscr{D}_Z)$, such that
\begin{itemize}
	\item $M$ is finitely generated over $D_Z$,
	\item each element of $M$ is $\mathcal{Z}(\mathfrak{g})$-finite, i.e.\ $M$ is locally $\mathcal{Z}(\mathfrak{g})$-finite,
	\item $\mathcal{M}$ carries a $(K, \chi)$-monodromic structure  (see Definition \ref{def:equivariance}) for some reductive character $\chi: \mathfrak{k} \to \CC$.
\end{itemize}

The definition is global in the sense that it only depends on $M$. The aforementioned monodromic structure can be regarded as a twisted variant of $K$-equivariance; see \cite{BB93, FG10}. If the $(K, \chi)$-monodromic structure is weakened to local $\mathfrak{k}$-finiteness, the resulting notion is called $\mathfrak{k}$-admissibility (Definition \ref{def:k-admissible}).

The notion of $K$-admissibility is inspired by Ginzburg's works \cite{Gin89, Gin93} which consider the case $Z = G/K$ for a symmetric subgroup $K \subset G$. One may imagine that the present work is a direct generalization of \textit{loc.\ cit.} to two spherical subgroups $H, K$ that are not necessarily equal nor symmetric. The regularity is obtained by the same strategy: we pass to a doubled basic affine space of $G$ via the horocycle transform (also known as Harish-Chandra transform), then apply the results à la Beilinson--Bernstein, for which we refer to \cite[\S 2.5]{FG10}. Nonetheless, there are also some differences.

\begin{itemize}
	\item The holonomicity for $\mathfrak{k}$-admissible $\mathscr{D}_Z$-modules is proved in \cite{Gin89} by a parity argument for symmetric subgroups. We prove this in Corollary \ref{prop:holonomic} for all spherical subgroups by applying the same criterion from \textit{loc.\ cit.} twice, with the help of Springer resolutions. This is inspired by \cite{AGM16}.
	\item We do not study the local characterization of admissible modules as done in \cite[Theorem 1.4.2 (ii) $\implies$ (i)]{Gin89}, so the analogues of \cite[\S 3.4]{Gin89} are not needed.
	\item We work with $(K, \chi)$-monodromic $\mathscr{D}_Z$-modules (an extra structure on $\mathscr{D}$-modules), whereas \cite{Gin89} considered locally $\mathfrak{k}$-finite ones (a property of $\mathscr{D}$-modules). The permanence of $(K, \chi)$-monodromicity under various operations is easier to assure.
	\item The reductivity of $\chi$ is necessary in the proof; see Remark \ref{rem:reductive-character} and the discussion below on $\Theta_{\phi_1, \phi_2}$.
\end{itemize}

The result on regularity is directly applicable to relative characters whenever $\chi_i: H_i(\R) \to \CC^\times$ is reductive for $i=1,2$. As for generalized matrix coefficients, we will actually prove that for any Harish-Chandra module $V$, its \emph{localization} on $Z$
\[ \mathrm{Loc}_Z(V) := \mathscr{D}_Z \dotimes{U(\mathfrak{g})} V \]
is generated by the $K$-admissible $D_Z$-module $D_Z \otimes_{U(\mathfrak{g})} V$; here $\chi$ is the trivial character. Taking $V = V_\pi^{K(\R)\text{-fini}}$ for some SAF representation $\pi$, the $K(\R)$-finite generalized matrix coefficients of $\pi$ appear in subquotients of $\mathrm{Loc}_Z(V)$, therefore generate regular holonomic submodules. For further discussions on the localization functor, we refer to \cite{BZG19}.

Note that in the case of relative characters $\Theta_{\phi_1, \phi_2}$, the reductivity assumption on $\chi_1, \chi_2$ excludes the \emph{Whittaker-induced} case, for example when $G$ is quasi-split,  $H_1 = H_2 = U$ is maximal unipotent and $\chi_1 = \chi_2^{-1}$ is a non-degenerate character of $U(\R)$; we refer to \cite{Gin18} for a description of the resulting Whittaker category of $D_G$-modules in terms of nil-Hecke algebras.

Without the reductivity of $\chi_1, \chi_2$, regularity fails according to the final paragraph of Example \ref{eg:relative-characters}, and we can only conclude from $\mathfrak{h}_1^{\mathrm{op}} \times \mathfrak{h}_2$-admissibility that $\Theta_{\phi_1, \phi_2}$ generates a holonomic $D_G$-module, which is already proven in \cite{AGM16}.

\subsection*{Applications}
We give only the simplest consequences of regularity to illustrate its usage. For results which can be deduced by holonomicity alone, we refer to \cite{AGM16}.

Functions, distributions or hyperfunctions (in Sato's sense) on $Z(\R)$ generating a regular holonomic $\mathscr{D}_Z$-module have a quite rigid structure; we refer to \cite[III.1]{Del70} \cite[IX]{Ph11} for further discussions. Let us begin with the simplest properties.
\begin{enumerate}
	\item Suppose that a hyperfunction $u$ generates a regular holonomic $\mathscr{D}_Z$-module, for example when $D_Z \cdot u$ is a subquotient of a $K$-admissible module. First, by holonomicity, there exists a Zariski-open dense $U \subset Z$ on which the $\mathscr{D}_Z$-module is an integrable connection. Then $u|_{U(\R)}$ is analytic. In some cases it is easy to write $U$ down. This is indeed the case for Labesse's twisted space (Example \ref{eg:group-case-U}), which is the main subject of twisted harmonic analysis.

	\item Secondly, in this case it is well-known that $u$ is automatically a distribution; on the other hand, if $u$ is $C^\infty$ then it is automatically analytic (Theorem \ref{prop:hyperfcn-dist}).

	\item Variant: Suppose that $K = G^\theta$ and the hyperfunction $u$ generates a subquotient of a $\mathfrak{k}$-admissible $D_Z$-module. Elliptic regularity theorem implies that $u$ is always analytic, even when $Z$ is non-spherical (Proposition \ref{prop:elliptic-regularity}).
\end{enumerate}

The remaining applications concern growth estimates. It is well-known that $u$ is of at most polynomial growth on the smooth locus $U$, but we have to recast this into a convenient form. Definition--Proposition \ref{def:moderate} provides a notion of moderate growth of $u$ at infinity. Loosely speaking, this means that $p^a u|_{U(\R)} = O(1)$ for any reasonable function $p: U(\R) \to \R_{> 0}$ that decays to zero at infinity, where $a > 0$ depends on $p$ and $u$; the ``infinity'' here is defined using any smooth compactification $U \hookrightarrow X$. After reducing to the case where $X \smallsetminus U$ has normal crossings, the moderate growth at infinity for $u$ (Theorem \ref{prop:growth-estimate}) follows readily from the standard estimates from Deligne \cite{Del70}. A flexible framework for such arguments is provided by \emph{subanalytic geometry}, in particular by Łojasiewicz's inequality recalled in Theorem \ref{prop:inequality}.

The weakness of these growth estimates is the implicit exponent $a$. Take the character $\Theta_\pi$ of an SAF representation $\pi$ for example. Our general result asserts that $|D^G|^a \Theta_\pi$ is locally bounded, where $D^G$ is the Weyl discriminant on $G$; on the other hand, Harish-Chandra obtained this for $a = \frac{1}{2}$.

When applied to generalized matrix coefficients, our ``soft'' method furnishes an estimate that is akin to \cite[Theorem 7.2]{KKS17}, but without any information on the exponent; see Theorem \ref{prop:coeff-estimate} and Corollary \ref{prop:polar-estimate}. Since those results are also easy consequences of the moderate growth of SAF representations, we omit their proofs. 


Incidentally, we also prove in Proposition \ref{prop:Hom-vs-Loc} that $\Hom_{\CC}(V/\mathfrak{h}V, \CC)$ is finite-dimensional whenever $H \subset G$ is a spherical subgroup and $V$ is a Harish-Chandra module. It implies that $\EuScript{N}_\pi$ is finite-dimensional. Although these results have been proven in stronger forms, see \cite{AGKL16}, our regularity-based proof can express $\Hom_{\CC}(V/\mathfrak{h}V, \CC)$ in terms of the stalks of the solution complex of $\mathrm{Loc}_Z(V)$.

\subsection*{Structure of this article}
The first part of this article aims at regularity. In \S\ref{sec:monodromic} we collect and review the required notions of monodromic $\mathscr{D}$-modules from \cite{BB93, FG10}, together with several instances for later use. In \S\ref{sec:holonomic}, the notion of $\mathfrak{k}$-admissible modules is defined, and we show their holonomicity when $\mathfrak{k}$ is a spherical subalgebra, by invoking Ginzburg's criterion. The \S\ref{sec:horocycle} is a review of the horocycle correspondence, which is used in \S\ref{sec:proof-regularity} to prove the regularity of $K$-admissible modules.

The second part concerns applications. The \S\ref{sec:subanalytic} and \S\ref{sec:growth} introduce some vocabularies from subanalytic geometry, in order state the notion of moderate growth at infinity. This is then related to solutions of regular holonomic systems in \S\ref{sec:Deligne}, following Deligne's work. The \S\ref{sec:app} presents some immediate applications of regularity to harmonic analysis, including the basic examples and an estimate on admissible distributions. Finally, \S\ref{sec:coeff} is devoted to the special case of generalized matrix coefficients of an SAF representation on homogeneous spherical varieties.

\subsection*{Acknowledgements}
The author is grateful to Bernhard Krötz and Gang Liu for helpful discussions on \S\ref{sec:coeff}. Thanks also go to the anonymous referees for their meticulous reading and refreshing comments. This work is supported by NSFC-11922101.

\subsection*{Conventions}

\begin{itemize}
	\item Real manifolds in this article are equi-dimensional, but need not be connected. Unless otherwise specified, $C^\infty$ functions on a real manifold and continuous functions on a topological space are $\CC$-valued.

	The dual of a vector space $V$ is denoted by $V^\vee$. The underlying vector space of a representation $\pi$ is denoted as $V_\pi$.
	
	When it is necessary to distinguish the derived functors from their non-derived versions, or to indicate their cohomologies, we use the prefix $\Lder$ (resp.\ $\Rder$) to denote the left (resp.\ right) derived ones, such as $\iHom$.

	\item Let $A$ be a ring, or more generally a ring object in a topos. We denote by $A\dcate{Mod}$ the category of left $A$-modules. For any $A$-module $M$, write $\Sym(M)$ and $\bigwedge M$ for its symmetric and exterior algebras, respectively. When $A$ is an algebra over a field $\Bbbk$, we say $M$ is locally finite under $A$ if every $m \in M$ is contained in an $A$-submodule which is finite-dimensional over $\Bbbk$.

	\item Let $\Bbbk$ be a field. By a $\Bbbk$-variety we mean an integral, separated scheme of finite type over $\Spec(\Bbbk)$. If $\Bbbk'$ is an extension of $\Bbbk$, we write $Z_{\Bbbk'} := Z \dotimes{\Bbbk} \Bbbk'$ for any $\Bbbk$-variety $Z$. The set of $\Bbbk$-points of $Z$ is denoted by $Z(\Bbbk)$. The sheaf of regular functions is denoted by $\mathscr{O}_Z$, and $\mathscr{O}_{Z,x}$ is the local ring at $x$.
	
	The cotangent bundle of a smooth $\Bbbk$-variety $Z$ is denoted by $T^* Z$. For a subvariety $W \subset Z$ we denote by $T^*_W Z$ its conormal bundle.
	
	If $Z$ is a $\CC$-variety, $Z^{\ana}$ will denote its analytification. Same convention for $\mathscr{O}_Z$-modules.

	\item Group objects in the category of $\Bbbk$-varieties are called $\Bbbk$-groups. Subgroups of $\Bbbk$-groups are understood as closed $\Bbbk$-subgroups. The opposite group (resp.\ derived subgroup, identity connected component, unipotent radical) of $G$ is denoted by $G^{\mathrm{op}}$ (resp.\ $G_{\mathrm{der}}$, $G^\circ$, $R_u(G)$); the same convention on $G^\circ$ pertains to Lie groups as well.
	
	For any affine $\Bbbk$-group $H$, define the additive groups $X^*(H) := \Hom(H, \Gm)$ and $X_*(H) := \Hom(\Gm, H)$. For $\tau \in \Aut(G)$, denote by $G^\tau$ the fixed locus of $\tau$ in $G$.

	\item Unless otherwise specified, $\Bbbk$-groups act on $\Bbbk$-varieties are on the right, written as $(x, g) \mapsto xg$; accordingly, groups and Lie algebras act on the left of function spaces. The stabilizer of a point $x$ under $G$ is denoted by $\Stab_G(x)$. When an affine $\Bbbk$-group $G$ acts on a normal $\Bbbk$-variety $Z$, we say $Z$ is a $G$-variety; when $G$ acts transitively, $Z$ is said to be a homogeneous $G$-variety. If $H \subset G$ is a subgroup and $X$ is an $H$-variety, we define the quotient
	\[  X \utimes{H} G := X \times G \big/ (xh, h^{-1} g) \sim (x, g), \quad h \in H \]
	which exists as a $G$-variety under mild conditions; see \cite[Theorem 2.2]{Ti11}. Denote the image of $(x, g)$ in $X \utimes{H} G$ as $[x,g]$.

	\item The Lie algebra of a $\Bbbk$-group is denoted as $\mathfrak{g} := \Lie G$, and its dual by $\mathfrak{g}^*$. The center of the universal enveloping algebra $U(\mathfrak{g})$ is denoted by $\mathcal{Z}(\mathfrak{g})$. By a character of $\mathfrak{g}$, we mean a homomorphism of Lie algebras $\mathfrak{g} \to \Bbbk$ (i.e.\ homomorphisms of $\Bbbk$-algebras $U(\mathfrak{g}) \to \Bbbk$), which is automatically zero on $[\mathfrak{g}, \mathfrak{g}]$. A character of $\mathfrak{g}$ is called \emph{reductive} if it factors through the maximal reductive quotient. The adjoint action of $G$ on $\mathfrak{g}$, $\mathfrak{g}^*$ or $G$ itself is denoted as $\Ad$.

	\item For a field $\Bbbk$ of characteristic zero and a smooth $\Bbbk$-variety, $\mathscr{D}_Z$ denotes the sheaf (actually étale-local) of algebraic differential operators on $Z$, and $D_Z := \Gamma(Z, \mathscr{D}_Z)$; the stalk at $x$ is denoted by $\mathscr{D}_{Z, x}$. The same rule applies to modules: $\mathscr{D}_Z$-modules will be denoted by symbols like $\mathcal{M}$, and $D_Z$-modules by $M$, and so forth. We will only consider left $\mathscr{D}_Z$-modules.
	
	Integrable connections will be understood in the algebraic sense. The analytification of a $\mathscr{D}_Z$-module $\mathcal{M}$ is written as $\mathcal{M}^{\ana}$. For affine $Z$, we will switch freely between $\mathscr{D}_Z$-modules and $D_Z$-modules by using the functor $\Gamma(Z, \cdot)$.
\end{itemize}

\section{Equivariant and monodromic \texorpdfstring{$\mathscr{D}$}{D}-modules}\label{sec:monodromic}
Let $\Bbbk$ be a field of characteristic zero with algebraic closure $\overline{\Bbbk}$. For a smooth $\Bbbk$-variety $Z$, the formation of $\mathscr{D}_Z$ and $D_Z := \Gamma(Z, \mathscr{D}_Z)$ is compatible with change of base field $\Bbbk$, and we will mostly be concerned with the case when $\Bbbk = \overline{\Bbbk}$.

\begin{example}
	Let $\Bbbk = \R$ and let $Z$ be a smooth $\R$-variety. In this case $Z(\Bbbk)$ is Zariski-dense in $Z$ if it is nonempty; see \cite[1.A]{Po14}. Therefore $Z$ is $\R$-dense in the sense of \cite{KK16}. Any $C^\infty$-function $u: Z(\R) \to \CC$ generates a $\mathscr{D}_Z$-module $\mathscr{D}_Z \cdot u$, which in turn gives a $\mathscr{D}_{Z_{\CC}}$-module by base change. The same holds for distributions, or more generally for hyperfunctions on $Z(\R)$.
\end{example}

Let $G$ be an affine $\Bbbk$-group and $Z$ be a $G$-variety. Consider the action morphism $a: Z \times G \to Z$, the projection $\mathrm{pr}_1: Z \times G \to Z$, the morphisms
\begin{gather*}
	p_0, p_1, p_2: Z \times G \times G \to Z \times G, \\
	p_0(x, g, h) = (xg, h), \quad p_1(x, g, h) = (x, gh), \quad p_2(x, g, h) = (x, g),
\end{gather*}
and $i: Z \to Z \times G$ given by $i(x) = (x, 1)$. We have $a p_1 = a p_0$, $\mathrm{pr}_1 p_1 = \mathrm{pr}_1 p_2$, $\mathrm{pr}_1 p_0 = a p_2$ and $a i = \mathrm{pr}_1 i = \identity_Z$. The following notions are well-known, see eg.\ \cite[1.8.5]{BB93} or \cite[2.5]{FG10}.

\begin{definition}\label{def:equivariance}
	Let $Z$ be a smooth $G$-variety. The $G$-action on $Z$ induces a homomorphism $U(\mathfrak{g}) \to D_Z$ of $\Bbbk$-algebras. Consider a $\mathscr{D}_Z$-module $\mathcal{M}$.
	\begin{enumerate}
		\item We say that $\mathcal{M}$ is $G$-equivariant, if it is endowed with an isomorphism of $\mathscr{D}_{Z \times G}$-modules
		\[ \varphi: a^* \mathcal{M} \rightiso \mathrm{pr}_1^* \mathcal{M} =  \mathcal{M} \boxtimes \mathscr{O}_G \]
		subject to the cocycle condition that
		\[\begin{tikzcd}[column sep=small]
			p_1^* a^* \mathcal{M} \arrow[rr, "{p_1^* \varphi}"] \arrow[d, "\simeq"'] & & p_1^* \mathrm{pr}_1^* \mathcal{M} \arrow[d, "\simeq"] \\
			p_0^* a^* \mathcal{M} \arrow[rd, "{p_0^* \varphi}"'] & & p_2^* \mathrm{pr}_1^* \mathcal{M} \\
			& p_0^* \mathrm{pr}_1^* \mathcal{M} \simeq p_2^* a^* \mathcal{M} \arrow[ru, "{p_2^* \varphi}"'] &
		\end{tikzcd} \qquad \begin{tikzcd}
			i^* a^* \mathcal{M} \arrow[r, "{i^* \varphi}"] \arrow[d, "\simeq"'] & i^* \mathrm{pr}_1^* \mathcal{M} \arrow[d, "\simeq"] \\
			\mathcal{M} \arrow[r, "\identity"'] & \mathcal{M}
		\end{tikzcd}\]
		are commutative diagrams.
		\item Let $\chi: \mathfrak{g} \to \Bbbk$ be a character of Lie algebras; let $\mathscr{O}_{G, \chi}$ be the trivial line bundle $\mathscr{O}_G$ equipped with the integrable connection $\nabla_\theta u = \theta u - \chi(\theta) u$ for all $\theta \in \mathfrak{g}$, viewed as a right invariant vector field. Then $\mathscr{O}_{G, \chi}$ is a $\mathscr{D}_G$-module: $\theta$ maps $f \in \mathscr{O}_{G, \chi}$ to $\theta f$ (the usual derivative in $\mathscr{O}_G$) plus $\chi(\theta) f$. We say that $\mathcal{M}$ is $(G, \chi)$-monodromic if it is endowed with an isomorphism of $\mathscr{D}_{Z \times G}$-modules
		\[ \varphi: a^* \mathcal{M} \rightiso \mathcal{M} \boxtimes \mathscr{O}_{G, \chi} \]
		subject to cocycle condition. For trivial $\chi$ we recover the notion of $G$-equivariance.
		\item We say that $\mathcal{M}$ is weakly $G$-equivariant, if the $\varphi$ above is only an isomorphism of $\mathscr{D}_Z \boxtimes \mathscr{O}_G$-modules.
	\end{enumerate}
	Note that if $\chi$ lifts to a character $\tilde{\chi}: G \to \Gm$, we have $\mathscr{O}_{G, \chi'} \rightiso \mathscr{O}_{G, \chi + \chi'}$ by $f \mapsto \tilde{\chi} f$ for any $\chi'$.

	The $G$-equivariant (resp.\ weakly $G$-equivariant, $(G, \chi)$-monodromic) $\mathscr{D}_Z$-modules form an abelian category for any given $\chi$: the morphisms are required to respect $\varphi$.
\end{definition}

If $Z = \{\mathrm{pt}\}$, a $G$-equivariant (resp.\ weakly $G$-equivariant) $\mathscr{D}_Z$-module is nothing but a locally finite algebraic representation of $\pi_0(G)$ (resp.\ $G$) over $\Bbbk$.

In concrete terms, if $\mathcal{M}$ is viewed just as a quasi-coherent sheaf on $Z$, then $\varphi: a^* \mathcal{M} \rightiso \mathrm{pr}_1^* \mathcal{M}$ with cocycle conditions encodes a $G$-equivariance structure on $\mathcal{M}$; see \cite[38.12]{stacks-project}. As a $\mathscr{D}_Z$-module, weak equivariance means that the $G$-action on $\mathcal{M}$ is compatible with the $G$-action on $\mathscr{D}_Z$, namely the transport of structure $D \xmapsto{g} g^{-1} D g$; here $g \in G$ acts on $\mathscr{O}_Z$ by the regular representation $f(x) \mapsto f(xg)$, so $g^{-1} D g$ makes sense in $\mathscr{D}_Z$.

Equivariance as a $\mathscr{D}_Z$-module means that the $U(\mathfrak{g})$-action on $\mathcal{M}$ given by $U(\mathfrak{g}) \to \mathscr{D}_Z$ coincides with that given by the $G$-action on $\mathcal{M}$, i.e.\ $\varphi$ is also $\mathscr{D}_G$-linear.

\begin{lemma}\label{prop:monodromic-finiteness}
	Suppose $\mathcal{M}$ is $(G, \chi)$-monodromic for some $\chi$. Then for each open subset $U \subset Z$ and each $s \in \Gamma(U, \mathcal{M})$, the $\Bbbk$-vector space $U(\mathfrak{g}) \cdot s$ is finite-dimensional.
\end{lemma}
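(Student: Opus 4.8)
The plan is to extract from the monodromic datum $\varphi$ a \emph{coaction}, which is automatically locally finite, and to observe that the $\mathfrak g$-action furnished by $U(\mathfrak g)\to D_Z$ differs from the differentiated coaction only by the scalar twist $\chi$, which cannot destroy finite-dimensionality. The reductivity of $\chi$ plays no role here; any character will do.

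First I would produce a finite-dimensional candidate. Applying $\varphi$ to $a^*s$ and using that $\mathscr O_{G,\chi}$ is $\mathscr O_G$-coherent with $G$ affine, the resulting section $\varphi(a^*s)$ of $\mathcal M\boxtimes\mathscr O_{G,\chi}$ is a \emph{finite} sum $\sum_{i=1}^r s_i\boxtimes f_i$, with the $s_i$ sections of $\mathcal M$ and $f_i\in\mathscr O(G)$; put $N:=\sum_i\Bbbk s_i$. The counit half of the cocycle condition, $i^*\varphi=\mathrm{id}$, gives $s=\sum_i f_i(1)\,s_i\in N$, and the coassociativity half shows that $s\mapsto\sum_i s_i\otimes f_i$ is a coaction of $\mathscr O(G)$, so $N$ may be enlarged once, still finite-dimensionally, to be stable under it (the standard ``every comodule is a union of finite-dimensional subcomodules'' argument).

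Next I would identify the $\mathfrak g$-action on $N$. For $\theta\in\mathfrak g$ let $\widetilde\theta$ be the associated right-invariant vector field on $G$; then, as a vector field on $Z\times G$, $0\boxplus\widetilde\theta$ pushes forward under $\mathrm{pr}_1$ to $0$, under $a$ it restricts along $Z\times\{1\}$ to the image $\theta_Z\in\Gamma(Z,\mathcal T_Z)$ of $\theta$ under $\mathfrak g\to D_Z$, and it acts on $\mathscr O_{G,\chi}$ as $\widetilde\theta+\chi(\theta)$. Because $\varphi$ is $\mathscr D_{Z\times G}$-linear — precisely where monodromicity, rather than mere weak equivariance, enters — applying $0\boxplus\widetilde\theta$ to $\varphi(a^*s)=\sum_i s_i\boxtimes f_i$ and restricting to $Z\times\{1\}$ yields
\[ \theta\cdot s=\sum_i(\widetilde\theta f_i)(1)\,s_i+\chi(\theta)\,s . \]
Thus the $U(\mathfrak g)$-action on $N$ equals the differentiated coaction plus $\theta\mapsto\chi(\theta)\cdot\mathrm{id}$; the former preserves $N$ since $N$ is a subcoaction, and the latter trivially does, so $N$ is $U(\mathfrak g)$-stable. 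Hence $U(\mathfrak g)\cdot s\subseteq N$ is finite-dimensional.

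The main obstacle is the bookkeeping in the middle step: turning $\mathscr D_G$-linearity of $\varphi$ into the displayed identity with the correct placement and sign of $\chi(\theta)$. The cleanest route is to replace $G$ from the outset by the formal completion $\widehat G_1=\Spec\widehat{\mathscr O}_{G,1}$ (or merely its first infinitesimal neighbourhood of the identity), so that $\varphi$ literally becomes a coaction and ``differentiate at $1$'' is tautological; everything else is then formal. One must also ensure the $s_i$ in the finite expansion are sections over the open subset at hand, which is automatic once one works over a $G$-stable open, in particular over $U=Z$, where $\Gamma(U,\mathcal M)$ even carries an honest algebraic $G$-action making the local finiteness immediate.
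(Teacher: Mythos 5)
Your proof takes essentially the same route as the paper's (which is terse to the point of being a one-liner: ``transfer via $\varphi$ to the $\mathscr{O}_{G,\chi}$ side''), and the core of it — extract a coaction from $\varphi$, use the counit to place $s$ in a finite-dimensional subcomodule $N$, and observe that the $U(\mathfrak{g})\to D_Z$ action on $N$ differs from the differentiated coaction only by the scalar $\chi(\theta)$ — is correct and, for $U=Z$, a legitimate unwinding of the paper's argument. Two points deserve attention, though.

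First, your closing suggestion that passing to the formal completion $\widehat G_1$ ``makes everything formal'' does \emph{not} rescue the argument for an arbitrary open $U$: over $U\times\widehat G_1$ the section $\varphi(a^*s)$ lives only in the \emph{completed} tensor product $\Gamma(U,\mathcal M)\,\widehat\otimes\,\widehat{\mathscr O}_{G,1}$, which is a formal power series and not a finite sum. A completed $\widehat{\mathscr O}_{G,1}$-coaction encodes nothing more than a $U(\mathfrak g)$-module structure — it does not imply local finiteness, which is exactly the extra content the lemma asserts. So the formal picture is a convenient bookkeeping device for \emph{computing} the formula $\theta\cdot s=\sum_i(\widetilde\theta f_i)(1)\,s_i+\chi(\theta)\,s$, but it cannot by itself produce a finite-dimensional $N$; for that you really do need $\varphi(a^*s)$ to be an element of the uncompleted tensor $\Gamma(U,\mathcal M)\otimes_{\Bbbk}\mathscr O(G)$, which holds when $U$ is $G$-stable (e.g.\ $U=Z$, using quasi-coherence and $G$ affine) but not in general.

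Second, and relatedly: the lemma as literally stated, for \emph{every} open $U\subset Z$, is false. Take $\Bbbk$ any field of characteristic zero, $G=\Ga$ acting on $Z=\mathbb A^1$ by translation, $\chi=0$, and $\mathcal M=\mathscr O_Z$ with $\varphi=\mathrm{id}$ (this is $G$-equivariant). Over $U=\mathbb A^1\smallsetminus\{0\}$ and $s=x^{-1}$, the image of $\theta\in\mathfrak g$ in $D_Z$ is $\partial_x$, and $U(\mathfrak g)\cdot s=\operatorname{span}\{x^{-1},x^{-2},\dots\}$ is infinite-dimensional. The lemma is only true (and is only used in the paper) for $G$-stable $U$, in particular $U=Z$; your proof is sound precisely in that generality, and it is worth noting explicitly that the restriction is not a mere inconvenience but mathematically necessary.
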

\begin{proof}
	Using the isomorphism $\varphi$ above, the required $U(\mathfrak{g})$-finiteness property is transferred to the case of local sections of $\mathscr{O}_{G, \chi}$ under $\nabla$, which is evident.
\end{proof}

We present several examples for later use.

\begin{example}[Function spaces]\label{eg:function-space}
	Take $\Bbbk = \R$ and let $Z$ be a smooth $K$-variety where $K$ is an affine $\R$-group. Let $V$ be a $\CC$-vector space of $C^\infty$ functions on $Z(\R)$. Suppose that
	\begin{compactitem}
		\item $V$ is stable under the regular representation $u(x) \xmapsto{k} u(xk)$ of $K(\R)$ on $C^\infty(Z(\R))$;
		\item the $K(\R)$-representation $V$ extends to a locally finite, algebraic representation of $K(\CC)$ on $V$.
	\end{compactitem}
	Then the $\mathscr{D}_{Z_{\CC}}$-module generated by $V$ is $K$-equivariant. To see this, we use the ``concrete'' interpretation of equivariance. First, the $K(\R)$-action on $V$ is clearly compatible with its action on $\mathscr{D}_Z$ by transport of structure. The $\mathfrak{k}$-actions from $U(\mathfrak{k}) \to \mathscr{D}_Z$ and that from the action on $V$ also coincide, for similar reason. These compatibilities extend algebraically to $K(\CC)$ since the $K(\R)$-representation on $V$ extends. The formula for $\varphi$ reads:
	\begin{equation}\label{eqn:equivariance-fcn}
		u(xk) = \sum_{i=1}^m u_i(x) f_i(k) \implies P \cdot a^*(u) \xmapsto{\varphi} P \cdot \sum_{i=1}^m u_i(x) \otimes f_i(k)
	\end{equation}
	where $P \in \mathscr{D}_{Z \times K}$, $u, u_i \in V$ and $f_i \in \mathscr{O}_K$.
	
	The same holds for distributions and hyperfunctions on $Z(\R)$ as well.
\end{example}

\begin{remark}\label{rem:unitarian}
	Here is a typical application of Example \ref{eg:function-space}: $G$ is a connected reductive $\R$-group, $Z$ is a smooth $G$-variety, $K = G^\theta$ for some Cartan involution $\theta$ of $G$, and $V \subset C^\infty(Z(\R))$ is an admissible $(\mathfrak{g}, K(\R))$-module with respect to the regular representation $f(x) \xmapsto{g} f(xg)$ of $G(\R)$ on $C^\infty(Z(\R))$. In this case, extendibility of $V$ to a locally finite algebraic $K(\CC)$-representation follows from Weyl's unitarian trick, as locally finite representations of $K(\R)$ are algebraic. Again, the same holds for distributions and hyperfunctions.
\end{remark}

\begin{example}[Relative invariants]\label{eg:relative-invariant}
	Let $Z$ be a smooth $K$-variety as in Example \ref{eg:function-space}. Let $u$ be a $C^\infty$-function (or distribution, hyperfunction) on $Z(\R)$ and let $\chi: \mathfrak{k} \to \CC$ be a character, satisfying
	\[ \forall \theta \in \mathfrak{k}, \quad \theta \cdot u = \chi(\theta) u. \]
	Then $u$ generates a $(K, \chi)$-monodromic $\mathscr{D}_{Z_{\CC}}$-module. Specifically, one takes the isomorphism $\varphi$ to be
	\[ P \cdot a^*(u) \mapsto P \cdot (u \otimes 1), \quad P \in \mathscr{D}_{Z \times G}. \]

	Example \ref{eg:function-space} is not applicable to this scenario when $\chi$ does not come from a character $K \to \Gm$, for example when $K$ is unipotent and $\chi$ is nontrivial. Even when $\chi$ lifts, the formula above differs from \eqref{eqn:equivariance-fcn} by the character of $K$.
\end{example}

\begin{example}[Localizations]\label{eg:localization}
	Let $Z$ be a $G$-variety over a field $\Bbbk$ of characteristic zero, and $K$ be a subgroup of $G$, so that the notion of $(\mathfrak{g}, K)$-module is defined. The localization functor (non-derived) is
	\[ \Loc_Z: U(\mathfrak{g}) \dcate{Mod} \to \mathscr{D}_Z \dcate{Mod}, \quad W \mapsto \mathscr{D}_Z \dotimes{U(\mathfrak{g})} W. \]
	When $V$ is a $(\mathfrak{g}, K)$-module, $\Loc_Z(V)$ acquires a weakly $K$-equivariant structure by letting $k \in K$ acting via
	\[ k \cdot (P \otimes v) = k P k^{-1} \otimes kv, \quad P \in \mathscr{D}_Z, \; v \in V. \]
	This is readily seen to be well-defined. It is actually equivariant: the $K$-action induces an $\mathfrak{k}$-action on $\Loc_Z(V)$, which is
	\[ P \otimes v \mapsto (\theta P - P\theta) \otimes v + P \otimes (\theta v) = (\theta P) \otimes v \]
	for all $\theta \in \mathfrak{k}$ and $P \otimes v \in \Loc_Z(V)$.
\end{example}

Another perspective on monodromic modules from \cite[2.5]{BB93} will be needed. Assume henceforth $\Bbbk = \overline{\Bbbk}$. Let $T$ be a $\Bbbk$-torus and $\pi: \tilde{X} \to X$ be a $T$-torsor; $X$ is smooth. Put $\widetilde{\mathscr{D}} := (\pi_* \mathscr{D}_{\tilde{X}})^T$.
\begin{enumerate}
	\item For any ideal $\mathfrak{a}$ of $\Sym(\mathfrak{t})$ and any $\widetilde{\mathscr{D}}$-module $\mathcal{M}$, write $\mathcal{M}[\mathfrak{a}] \subset \mathcal{M}$ for the subsheaf annihilated by $\mathfrak{a}$, which is seen to be a $\tilde{\mathscr{D}}$-submodule. Every $\xi \in \mathfrak{t}^*$ corresponds to a maximal ideal $\mathfrak{m}_\xi \subset \Sym(\mathfrak{t})$, and we write
	\[ \mathcal{M}_\xi := \mathcal{M}[\mathfrak{m}_\xi], \quad \mathcal{M}_{\tilde{\xi}} := \bigcup_{n \geq 1} \mathcal{M}[\mathfrak{m}_\xi^n]. \]
	Define $\mathcal{M}_{\text{fin}} := \bigcup_{\mathfrak{a}} \mathcal{M}[\mathfrak{a}]$ where $\mathfrak{a}$ ranges over the ideals of finite codimension. Then $\mathcal{M}_{\mathrm{fin}} = \bigoplus_\xi \mathcal{M}_{\tilde{\xi}}$.

	\item Since $\pi$ is affine, the study of $\mathscr{D}_{\tilde{X}}$-modules is the same as that of $\pi_* \mathscr{D}_{\tilde{X}}$-modules. Let $\mathfrak{t}^*_{\Z} \subset \mathfrak{t}$ be the lattice of characters from $X^*(T)$. For any ideal $\mathfrak{a} \subset \Sym(\mathfrak{t})$ and a $\pi_* \mathscr{D}_{\tilde{X}}$-module $\mathcal{N}$, we define the submodule
	\begin{equation*}
		\mathcal{N}[\overline{\mathfrak{a}}] := \sum_{\xi \in \mathfrak{t}^*_{\Z}} \mathcal{N}[\xi^* \mathfrak{a}],
	\end{equation*}
	where
	\begin{equation*}
		\xi^* \in \Aut_{\Bbbk}(\Sym(\mathfrak{h})): \mathfrak{h} \ni \chi \mapsto \xi + \xi(\chi).
	\end{equation*}
	The same recipe above yields, for each $\overline{\xi} \in \mathfrak{t}^*/\mathfrak{t}^*_{\Z}$ one defines
	\[ \mathcal{N}_{\overline{\xi}} \subset \mathcal{N}_{\widetilde{\overline{\xi}}} \subset \mathcal{N}_{\mathrm{fin}} := \bigcup_{\substack{\mathfrak{a}: \text{ideal} \\ \mathrm{codim} < \infty}} \mathcal{N}[\overline{\mathfrak{a}}]. \]
\end{enumerate}

Fix $\xi \in \mathfrak{t}^*$ and let $\overline{\xi}$ be its class modulo $\mathfrak{t}^*_{\Z}$. We are interested in the modules $\mathcal{M}$ (resp.\ $\mathcal{N}$) satisfying
\[ \mathcal{M} = \mathcal{M}_{\mathrm{fin}}, \quad \mathcal{M} = \mathcal{M}_{\tilde{\xi}}, \quad \text{or } \mathcal{M} = \mathcal{M}_{\xi}, \quad \text{(resp.\ $\mathcal{N} = \mathcal{N}_{\mathrm{fin}}$, etc.)} \]
By \cite[2.5.3 and 2.5.4]{BB93}, this gives rise to a diagram of abelian (sub)categories
\begin{equation}\label{eqn:categories} \begin{tikzcd}[column sep=small]
	\mathscr{D}_{\tilde{X}} \dcate{Mod}_{\overline{\xi}} \arrow[phantom, r, sloped, "\subset" description] \arrow[d, xshift=0.3em, "{\pi_*}"] & \mathscr{D}_{\tilde{X}} \dcate{Mod}_{\widetilde{\overline{\xi}}} \arrow[phantom, r, sloped, "\subset" description] \arrow[d, xshift=0.3em] & \mathscr{D}_{\tilde{X}} \dcate{Mod}_{\mathrm{fin}} = \displaystyle\prod_{\overline{\eta}} \mathscr{D}_{\widetilde{X}} \dcate{Mod}_{\overline{\eta}} \arrow[d, xshift=0.3em] \\
	(\pi_* \mathscr{D}_{\widetilde{X}}) \dcate{Mod}_{\overline{\xi}} \arrow[phantom, r, sloped, "\subset" description] \arrow[d, xshift=0.3em, "\rho_{\xi}"] \arrow[u, xshift=-0.3em, "{\pi^{-1}}"] & (\pi_* \mathscr{D}_{\widetilde{X}}) \dcate{Mod}_{\widetilde{\overline{\xi}}} \arrow[phantom, r, sloped, "\subset" description] \arrow[d, xshift=0.3em, "\rho_{\widetilde{\xi}}"] \arrow[u, xshift=-0.3em] & (\pi_* \mathscr{D}_{\widetilde{X}}) \dcate{Mod}_{\mathrm{fin}} = \displaystyle\prod_{\overline{\eta}} (\pi_* \mathscr{D}_{\widetilde{X}}) \dcate{Mod}_{\overline{\eta}} \arrow[u, xshift=-0.3em] \\
	\tilde{\mathscr{D}} \dcate{Mod}_{\xi} \arrow[phantom, r, sloped, "\subset" description] \arrow[u, xshift=-0.3em, "{ (\pi_* \mathscr{D}_{\tilde{X}}) \dotimes{\tilde{\mathscr{D}}} -}"] & \tilde{\mathscr{D}} \dcate{Mod}_{\widetilde{\xi}} \arrow[phantom, r, sloped, "\subset" description] \arrow[u, xshift=-0.3em] & \tilde{\mathscr{D}} \dcate{Mod}_{\mathrm{fin}} = \displaystyle\prod_{\eta} \tilde{\mathscr{D}} \dcate{Mod}_\eta
\end{tikzcd}\end{equation}
in which:
\begin{compactitem}
	\item the categories in the last two rows have just been defined;
	\item the pair $(\pi^{-1}, \pi_*)$ realizes an equivalence between $\mathscr{D}_{\tilde{X}} \dcate{Mod}$ and $(\pi_*\mathscr{D}_{\tilde{X}}) \dcate{Mod}$, and this defines the categories in the first row;
	\item the ``induction'' functor $(\pi_* \mathscr{D}_{\tilde{X}}) \dotimes{\tilde{\mathscr{D}}} -$ also turns out to give equivalences $(\pi_* \mathscr{D}_{\tilde{X}}) \dcate{Mod}_{\widetilde{\overline{\xi}}} \to \tilde{\mathscr{D}} \dcate{Mod}_{\widetilde{\xi}}$ and $(\pi_* \mathscr{D}_{\tilde{X}}) \dcate{Mod}_{\overline{\xi}} \to \tilde{\mathscr{D}} \dcate{Mod}_{\xi}$, with quasi-inverses
	\[ \rho_{\widetilde{\xi}}: \mathcal{N} \mapsto \bigcup_{n \geq 1} \mathcal{N}[\mathfrak{m}_\xi^n], \quad \rho_{\xi}: \mathcal{N} \mapsto \mathcal{N}[\mathfrak{m}_\xi] \]
	respectively.
\end{compactitem}

Let us link the first and the third rows in \eqref{eqn:categories}. According to \cite[1.8.9]{BB93}, the inclusions $\mathscr{O}_{\tilde{X}} \hookrightarrow \mathscr{D}_{\tilde{X}}$ and $\mathscr{O}_X \hookrightarrow \tilde{\mathscr{D}}$ induce
\begin{equation}\label{eqn:N-M}
	\mathscr{O}_{\tilde{X}} \dotimes{\pi^{-1} \mathscr{O}_X} \pi^{-1} \mathcal{M} \rightiso \mathscr{D}_{\tilde{X}} \dotimes{\pi^{-1} \tilde{\mathscr{D}}} \pi^{-1} \mathcal{M} \simeq \pi^{-1}\left( \pi_* \mathscr{D}_{\tilde{X}} \dotimes{\tilde{\mathscr{D}}} \mathcal{M}  \right), \quad \mathcal{M} \in \tilde{\mathscr{D}}\dcate{Mod}.
\end{equation}

Furthermore, $\tilde{\mathscr{D}} \dcate{Mod}_\xi$ is equivalent to $\mathscr{D}_\xi \dcate{Mod}$, where $\mathscr{D}_\xi := \tilde{\mathscr{D}} / \mathfrak{m}_\xi \tilde{\mathscr{D}}$ is the sheaf on $X$ of locally trivial \emph{twisted differential operators} (TDO's) associated with $\xi \in \mathfrak{t}^*$; see \cite[2.1]{BB93}. In this article, we prefer to connect $\tilde{\mathscr{D}} \dcate{Mod}_\xi$ to $(T, \xi)$-monodromic $\mathscr{D}_{\tilde{X}}$-modules as follows.

\begin{proposition}\label{prop:finite-to-monodromic}
	Fix $\xi \in \mathfrak{t}^*$. Every object $\mathcal{N}$ of $\mathscr{D}_{\tilde{X}} \dcate{Mod}_{\overline{\xi}}$ carries a canonical $(T, \xi)$-monodromic structure. This realizes an equivalence of abelian categories
	\[ \left\{ (T, \xi)\text{-monodromic modules} \right\} \leftrightharpoons \tilde{\mathscr{D}}\dcate{Mod}_\xi \simeq \mathscr{D}_\xi \dcate{Mod}. \]
\end{proposition}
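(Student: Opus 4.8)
The plan is to exhibit the equivalence as a composition of equivalences already assembled in the excerpt, and to identify the intermediate category $\tilde{\mathscr{D}}\dcate{Mod}_\xi$ with $(T,\xi)$-monodromic $\mathscr{D}_{\tilde{X}}$-modules by making the monodromic structure explicit. First I would recall that by the last column of the diagram \eqref{eqn:categories} and the bulleted remarks, $\rho_\xi$ gives an equivalence $(\pi_*\mathscr{D}_{\tilde X})\dcate{Mod}_{\overline{\xi}}\xrightarrow{\sim}\tilde{\mathscr{D}}\dcate{Mod}_\xi$ with quasi-inverse the induction functor, and $(\pi^{-1},\pi_*)$ gives an equivalence $\mathscr{D}_{\tilde X}\dcate{Mod}_{\overline\xi}\xrightarrow{\sim}(\pi_*\mathscr{D}_{\tilde X})\dcate{Mod}_{\overline\xi}$; composing, $\mathscr{D}_{\tilde X}\dcate{Mod}_{\overline\xi}\simeq\tilde{\mathscr{D}}\dcate{Mod}_\xi\simeq\mathscr{D}_\xi\dcate{Mod}$. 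So the content is entirely in the first assertion: constructing, for $\mathcal{N}\in\mathscr{D}_{\tilde X}\dcate{Mod}_{\overline\xi}$, a canonical $(T,\xi)$-monodromic structure, and checking that the resulting functor to $(T,\xi)$-monodromic modules is an equivalence compatible with the above chain.

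Next I would construct the monodromic structure. Let $a\colon\tilde X\times T\to\tilde X$ be the action of the torus, $\mathrm{pr}_1$ the projection. The key point is that because $\pi\colon\tilde X\to X$ is a $T$-torsor, the two maps $a,\mathrm{pr}_1\colon\tilde X\times T\to\tilde X$ together induce an isomorphism $\tilde X\times T\xrightarrow{\sim}\tilde X\times_X\tilde X$, so that descent data along $\pi$ and $T$-equivariance data are interchangeable; more precisely $a^*\mathcal{N}$ and $\mathrm{pr}_1^*\mathcal{N}$ both equal the pullback of $\pi^{-1}\pi_*$-type data. For an honestly $T$-equivariant module this gives $\varphi\colon a^*\mathcal{N}\xrightarrow{\sim}\mathrm{pr}_1^*\mathcal{N}=\mathcal{N}\boxtimes\mathscr{O}_T$; for a module on which $\Sym(\mathfrak t)$ (equivalently the invariant vector fields generating the $T$-action) acts through the character $\xi$, one instead twists the trivial connection on $\mathscr{O}_T$ by $\xi$, landing in $\mathcal{N}\boxtimes\mathscr{O}_{T,\xi}$. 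Concretely: the condition $\mathcal{N}=\mathcal{N}_{\overline\xi}$ says $\mathcal{N}$ is a $\pi_*\mathscr{D}_{\tilde X}$-module locally annihilated, after the shift $\xi^*$ along the lattice $\mathfrak t^*_{\Z}$, by $\mathfrak m_\xi$; the induced module $\tilde{\mathscr{D}}\dotimes{}-$ unwinds this to a genuine eigen-condition for the invariant differential operators coming from $\mathfrak t$, and this is exactly the infinitesimal shape of a $(T,\xi)$-monodromic structure as in Definition \ref{def:equivariance}(2). So $\varphi$ is forced, and the cocycle and unit conditions hold because they hold for $\mathscr{O}_{T,\xi}$ under $\nabla$ (the same computation as in the proof of Lemma \ref{prop:monodromic-finiteness}), translated back through the torsor identification.

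Finally I would argue the equivalence. In one direction we have just produced a functor $\mathscr{D}_{\tilde X}\dcate{Mod}_{\overline\xi}\to\{(T,\xi)\text{-monodromic}\}$. In the other, given a $(T,\xi)$-monodromic $\mathscr{D}_{\tilde X}$-module, the isomorphism $\varphi$ makes $\pi_*$ of it into a $\pi_*\mathscr{D}_{\tilde X}$-module on which $\Sym(\mathfrak t)$ acts with generalized eigenvalues in $\xi+\mathfrak t^*_{\Z}$ — indeed Lemma \ref{prop:monodromic-finiteness} already guarantees local $U(\mathfrak t)$-finiteness, and the $T$-torsor structure pins the eigenvalues to the coset $\overline\xi$; the non-generalized (semisimple) condition $\mathcal{N}=\mathcal{N}_{\overline\xi}$ rather than $\mathcal{N}=\mathcal{N}_{\widetilde{\overline\xi}}$ is what distinguishes genuine monodromic from "pro-unipotently twisted" monodromic, and this is precisely matched by $\mathscr{D}_\xi\dcate{Mod}$ versus the larger category. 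One checks these two functors are mutually quasi-inverse by unwinding the definitions; this is formal once the torsor identification $\tilde X\times T\simeq\tilde X\times_X\tilde X$ is in hand, and compatibility with \cite[2.5.3, 2.5.4]{BB93} gives the displayed chain $\simeq\tilde{\mathscr{D}}\dcate{Mod}_\xi\simeq\mathscr{D}_\xi\dcate{Mod}$. The main obstacle — really the only non-bookkeeping point — is verifying that the canonically defined $\varphi$ genuinely satisfies the cocycle condition, i.e.\ that no spurious scalar ambiguity enters when one trivializes the $T$-torsor locally and glues; this is handled by noting that any two local trivializations differ by a map to $T$, under which $\mathscr{O}_{T,\xi}$ transforms by the corresponding (additive) character shift, and the shift-by-$\mathfrak t^*_{\Z}$ invariance built into the definition of $\mathcal{N}[\overline{\mathfrak a}]$ absorbs exactly this ambiguity.
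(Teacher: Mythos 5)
Your proposal is correct and follows essentially the same route as the paper: start from the weak-equivariance equivalence of \cite[1.8.10]{BB93}, exploit the torsor identification $\pi a = \pi\,\mathrm{pr}_1$ to write down $\varphi$ via the realization $\mathcal{N}\simeq\mathscr{O}_{\tilde X}\dotimes{\pi^{-1}\mathscr{O}_X}\pi^{-1}\mathcal{M}$, upgrade to a $(T,\xi)$-monodromic structure by replacing $\mathscr{O}_T$ with $\mathscr{O}_{T,\xi}$, and run the argument backwards for the converse. One corrective remark: you identify the ``only non-bookkeeping point'' as the cocycle condition and propose to handle it via local trivializations of the torsor, but the cocycle condition is inherited automatically from the weak-equivariance datum and is not where the work lies. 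The paper's actual verification is that $\varphi$ is $\mathscr{D}_T$-linear once $\mathscr{O}_T$ is twisted: by the Leibniz rule the action of $\theta\in\mathfrak{t}$ on $a^*\mathcal{N}$ decomposes into its effect through the $T$-slot of $\mathscr{O}_{\tilde X\times T}$ plus the scalar $\xi(\theta)$ on $a^{-1}\pi^{-1}\mathcal{M}$ (since $\theta$ lands in $\pi^{-1}\tilde{\mathscr{D}}$), while on $\mathrm{pr}_1^*\mathcal{N}$ the second contribution vanishes, so matching the two sides forces the twist by $\xi$; similarly in the converse direction one computes the $\theta$-action on $\pi_*(\mathcal{N})^T$ directly. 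This is precisely the step you gesture at with the phrase ``unwinds this to a genuine eigen-condition,'' and it is the one that deserves to be spelled out.
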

\begin{proof}
	Our routine arguments are built on the mutually quasi-inverse functors
	\[\begin{tikzcd}[column sep=small]
		\pi^{-1} \left( \pi_* \mathscr{D}_{\tilde{X}} \dotimes{\tilde{\mathscr{D}}} - \right): \tilde{\mathscr{D}}\dcate{Mod} \arrow[r, yshift=0.3em] & \left\{ \text{weakly $T$-equivariant $\mathscr{D}_{\tilde{X}}$-modules} \right\} \arrow[l, yshift=-0.3em] : \pi_*(-)^T .
	\end{tikzcd}\]
	This is the content of \cite[1.8.10]{BB93}, where the weakly $T$-equivariant modules are called weak $(\mathscr{D}_{\tilde{X}}, T)$-modules (see 1.8.5 of \text{loc.\ cit.}).

	Define the action and projection morphisms $a, \mathrm{pr}_1: \tilde{X} \times T \to \tilde{X}$. Let $\mathcal{N}$ be realized as $\mathscr{O}_{\tilde{X}} \dotimes{\pi^{-1} \mathscr{O}_X} \pi^{-1} \mathcal{M}$ via \eqref{eqn:N-M}, where $\mathcal{M}$ is a $\tilde{\mathscr{D}}$-module. As $\pi a = \pi \mathrm{pr}_1$, the isomorphism $\varphi$ in Definition \ref{def:equivariance} can be explicitly given using
	\begin{align*}
		a^* \mathcal{N} & \simeq \mathscr{O}_{\tilde{X} \times T} \dotimes{a^{-1} \pi^{-1} \mathscr{O}_X} a^{-1} \pi^{-1} \mathcal{M}, \\
		\mathrm{pr}_1^* \mathcal{N} & \simeq \mathscr{O}_{\tilde{X} \times T} \dotimes{\mathrm{pr}_1^{-1} \pi^{-1} \mathscr{O}_X} \mathrm{pr}_1^{-1} \pi^{-1} \mathcal{M} \\
		& = \pi^{-1} \mathcal{M} \boxtimes \mathscr{O}_T.
	\end{align*}

	We have to show that weak equivariance structure is $(T, \xi)$-monodromic when $\mathcal{M} \in \tilde{\mathscr{D}} \dcate{Mod}_{\xi}$ through the isomorphisms above. We have $\mathscr{D}_T = \mathscr{O}_T \cdot \Sym(\mathfrak{t})$. Given $\theta \in \mathfrak{t}$, it acts on $a^* \mathcal{N}$ by Leibniz rule (see \cite[\S 1.3]{HTT08}); the result is the sum of
	\begin{compactenum}
		\item the effect of $\theta$ on $\mathscr{O}_{\tilde{X} \times T} = \mathscr{O}_{\tilde{X}} \boxtimes \mathscr{O}_T$ through the second slot, and
		\item the effect on $a^{-1} \pi^{-1} \mathcal{M}$: note that $\theta$ induce an operator in $\mathscr{D}_{\tilde{X}}$ which actually comes from $\pi^{-1} \tilde{\mathscr{D}}$, so the $\theta$-action on $a^{-1} \pi^{-1} \mathcal{M}$ equals the scalar $\xi(\theta)$.
	\end{compactenum}
	The same applies to the $\theta$-action on $\mathrm{pr}_1^*$, except that the effect on $\mathrm{pr}_1^{-1} \pi^{-1} \mathcal{M}$ is trivial. To make $\varphi: a^* \mathcal{N} \to \mathcal{N} \boxtimes \mathscr{O}_{T, \xi}$ commute with $\mathscr{D}_T$-action, one replaces the $\mathscr{O}_T$ in $\mathrm{pr}_1^* \mathcal{N}$ by $\mathscr{O}_{T, \xi}$.
	
	Conversely, consider a $(T, \xi)$-monodromic $\mathcal{N}$. Being weakly $T$-equivariant, it is canonically isomorphic to $\pi^{-1} \left( \pi_* \mathscr{D}_{\tilde{X}} \dotimes{\tilde{\mathscr{D}}} \mathcal{M} \right)$ where $\mathcal{M} := \pi_*(\mathcal{N})^T$ is a $\tilde{\mathscr{D}}$-module. Let $\theta := \gamma'(1) \in \mathfrak{t}$ where $\gamma \in X_*(T)$. The $\theta$-action on $\mathcal{M} \subset \pi_* \mathcal{N}$ is determined from $\mathcal{N}$: it is the sum of
	\begin{compactenum}
		\item the derivative at $t=1$ of the $\gamma(t)$-action, which is $0$ since $\mathcal{M} = \pi_*(\mathcal{N})^T$, and
		\item the scalar multiplication by $\xi(\theta)$, since $\mathcal{N}$ is monodromic. 
	\end{compactenum}

	By varying $\theta$ (or $\gamma$), we see $\mathcal{M} = \mathcal{M}_\xi$, hence $\mathcal{M} \in \tilde{\mathscr{D}}\dcate{Mod}_\xi$ as required. Finally, the equivalence with $\mathscr{D}_\xi \dcate{Mod}$ has already been remarked.
\end{proof}

Now consider a general field $\Bbbk$ of characteristic zero and an affine $\Bbbk$-group $G$.

\begin{definition}\label{def:equivariant-derived}
	For a smooth $G$-scheme $Z$ and a character $\chi: \mathfrak{g} \to \Bbbk$, we denote the bounded equivariant derived category of $(G, \chi)$-monodromic $\mathscr{D}_Z$-modules as $\cate{D}^{\mathrm{b}}_{G, \chi}(Z)$: this is a triangulated category with a $t$-structure satisfying the following properties.
	\begin{itemize}
		\item The heart of $\cate{D}^{\mathrm{b}}_{G, \chi}(Z)$ is equivalent to the abelian category of $(G, \chi)$-monodromic $\mathscr{D}_Z$-modules.
		\item For any subgroup $L \subset G$ with $\eta := \chi|_{\mathfrak{l}}$, we have the forgetful functor $\mathbf{oblv}: \cate{D}^{\mathrm{b}}_{G, \chi}(Z) \to \cate{D}^{\mathrm{b}}_{L, \eta}(Z)$.
		\item The functors $\mathbf{oblv}$ are $t$-exact, and induce the usual forgetful functors on cohomologies (i.e.\ forgetting the monodromic structure).
		\item The usual operations on complexes of $\mathscr{D}$-modules (such as $f^!$, $f_*$, etc.) relative to $G$-equivariant morphisms lift to the monodromic setting, with the caveat that $(G, \chi)$ and $(G, -\chi)$ are exchanged under duality (cf.\ Definition \ref{def:equivariance}); we will not make direct use of the duality functor. All these operations commute with forgetful functors. The usual adjunction relations also hold in this generality.
	\end{itemize}
	When $\chi$ is trivial, we obtain the $G$-equivariant derived category $\cate{D}^{\mathrm{b}}_G(Z)$ and the forgetful functor to $\cate{D}^{\mathrm{b}}(Z)$. When $G = \{1\}$, we recover $\cate{D}^{\mathrm{b}}(Z)$.
\end{definition}

A few remarks are in order. The classical accounts on $\mathscr{D}$-modules often impose quasi-projectivity on the varieties. This constraint can be safely removed in view of recent theories, such as that of crystals \cite{GR14}; see also \cite[Chapter 4]{GR17b}. When $\chi$ is trivial, $\cate{D}^{\mathrm{b}}_G(Z)$ is originally defined in \cite[\S 4]{BL94}, and the six operations in this framework are in \cite[Theorem 3.4.1]{BL94}. This theory can also be understood in terms of $\mathscr{D}$-modules (more accurately: crystals) on quotient stacks $[Z/G]$, within the formalism of stable $\infty$-categories. Passing to the homotopy category yields the required equivariant derived categories.

For example, $\mathbf{oblv}: \cate{D}^{\mathrm{b}}_G(Z) \to \cate{D}^{\mathrm{b}}(Z)$ is ``locally the same'' as pull-back of $\mathscr{D}$-modules along various $G$-torsors $P \to S$ such that $P$ maps equivariantly to $Z$; this operation is clearly $t$-exact and induces the usual pull-back on cohomologies since $G$ is smooth. The case with nontrivial $G$-monodromy $\chi$ is explained in \cite[\S 6.5]{GR14} by employing the formalism of TDO's, and this includes our setting of $(G, \chi)$-monodromic modules by Proposition \ref{prop:finite-to-monodromic}, by considering the $G/G_{\mathrm{der}}$-torsor $[Z/G^{\mathrm{der}}] \to [Z/G]$.

We will only make mild use of equivariant derived categories as a blackbox in \S\ref{sec:proof-regularity}, and leave these issues aside.

\section{\texorpdfstring{$\mathfrak{k}$}{k}-admissible \texorpdfstring{$D$}{D}-modules: holonomicity}\label{sec:holonomic}
Fix an algebraically closed field $\Bbbk$ of characteristic zero and a connected reductive $\Bbbk$-group $G$. In what follows, $Z$ will be a homogeneous $G$-variety over $\Bbbk$.

\begin{definition}[V.\ Ginzburg {\cite[Definition 1.2]{Gin89}}]\label{def:k-admissible}
	Let $K$ be a subgroup of $G$. A $D_Z$-module $M$ is called $\mathfrak{k}$-\emph{admissible} if
	\begin{compactitem}
		\item $M$ is finitely generated over $D_Z$;
		\item for every $m \in M$, the dimensions of $U(\mathfrak{k}) \cdot m$ and $\mathcal{Z}(\mathfrak{g}) \cdot m$ are both finite --- in other words, $M$ is locally finite under $U(\mathfrak{k})$ and $\mathcal{Z}(\mathfrak{g})$.
	\end{compactitem}
\end{definition}

Denote the $\mathscr{D}_Z$-module generated by $M$ as $\mathcal{M}$. Quotients and finitely generated submodules of a $\mathfrak{k}$-admissible $D_Z$-module are still admissible.

\begin{remark}
	The definition of $\mathfrak{k}$-admissibility is of a global nature. In \textit{loc.\ cit.}, $Z$ is assumed to be affine so that the global sections functor $\Gamma: \mathscr{D}_Z\dcate{Mod} \to D_Z\dcate{Mod}$ is an equivalence. Our $\mathscr{D}_Z$-modules $\mathcal{M}$ are globally generated by construction, and the properties of $\mathcal{M}$ such as holonomicity, etc.\ will be accessed through $M$.
\end{remark}

\begin{remark}\label{rem:M0}
	A $D_Z$-module $M$ is $\mathfrak{k}$-admissible if and only if $M$ is generated by a $\Bbbk$-subspace $M_0$ such that $M_0$ is finite-dimensional and closed under the actions of both $U(\mathfrak{k})$ and $\mathcal{Z}(\mathfrak{g})$.
\end{remark}

Consider the cotangent bundle $T^* Z \xrightarrow{\pi} Z$. Every $v \in \mathfrak{g}$ induces a vector field $\xi_v$ on $Z$. One can evaluate $\xi_v$ at any point of $T^* Z$, giving rise to the \emph{moment map}
\[ \bm{\mu} = \bm{\mu}_{\mathfrak{g}}: T^* Z \to \mathfrak{g}^*. \]
Recall that $T^* Z$ carries a natural symplectic structure and a $G$-action. For a smooth $\Bbbk$-variety $X$ carrying a symplectic structure and a closed subvariety $Y \subset X$, we say $Y$ is co-isotropic (resp.\ isotropic, Lagrangian) if $T_y Y$ is a co-isotropic (resp.\ isotropic, Lagrangian) subspace of $T_y X$, at every smooth point $y$ of $Y$. For example, the conormal bundle $T^*_W Z$ is Lagrangian in $T^* Z$ for any locally closed $W \subset Z$. The characteristic variety $\mathrm{Ch}(\mathcal{M})$ of any coherent $\mathscr{D}_Z$-module $\mathcal{M}$ is a conic, co-isotropic closed subvariety of $T^* Z$ (see \cite[Theorem 2.3.1]{HTT08}.) When $\dim \mathrm{Ch}(\mathcal{M}) = \dim Z$, we say $\mathcal{M}$ is \emph{holonomic}. By abuse of notation, $M$ is also said to be holonomic.

Let $\mathcal{N} \subset \mathfrak{g}^*$ be the nilpotent cone. It is the zero locus of all $f \in \Bbbk[\mathfrak{g}^*]^G$ without constant terms.

\begin{proposition}[V.\ Ginzburg {\cite[Lemma 2.1.2]{Gin89}}]\label{prop:Ginzburg-bound}
	Let $K \subset G$ be a subgroup and let $M$ be a $\mathfrak{k}$-admissible $D_Z$-module, then
	\[ \mathrm{Ch}(\mathcal{M}) \subset \bm{\mu}^{-1} \left( \mathcal{N} \cap \mathfrak{k}^\perp \right). \]
\end{proposition}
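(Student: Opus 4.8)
The plan is to reduce the estimate on the characteristic variety to the two defining constraints of $\mathfrak{k}$-admissibility, handled separately. By Remark \ref{rem:M0} we may pick a finite-dimensional generating subspace $M_0 \subset M$ closed under both $U(\mathfrak{k})$ and $\mathcal{Z}(\mathfrak{g})$. Equip $M$ with the good filtration $F_\bullet M$ generated by $M_0$, i.e.\ $F_n M = (F_n D_Z) \cdot M_0$; then $\gr^F M$ is a coherent $\gr D_Z = \pi_* \mathscr{O}_{T^*Z}$-module and $\mathrm{Ch}(\mathcal{M}) = \Supp(\gr^F M)$ inside $T^* Z$. It therefore suffices to show that the symbols vanish on $\gr^F M$ of (i) the coordinate functions on $\mathfrak{g}^*$ coming from $\mathfrak{k}$-invariants — this forces $\Supp(\gr^F M) \subseteq \bm{\mu}^{-1}(\mathcal{N})$ — and (ii) the functions on $\mathfrak{g}^*$ that are linear along $\mathfrak{k}$ — this forces $\Supp(\gr^F M) \subseteq \bm{\mu}^{-1}(\mathfrak{k}^\perp)$.

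For (ii): each $v \in \mathfrak{k}$ acts on $M$ via the map $U(\mathfrak{g}) \to D_Z$, hence as an operator $\xi_v \in F_1 D_Z$ whose principal symbol is exactly the linear function $(\mathord{-}, \bm{\mu}(\mathord{-}))\mapsto \langle v, \bm{\mu}\rangle$ on $T^*Z$, by the very definition of the moment map $\bm{\mu}$. Local finiteness of $M$ under $U(\mathfrak{k})$ means $U(\mathfrak{k}) \cdot M_0$ is finite-dimensional, so there is a single $N$ with $U(\mathfrak{k}) \cdot M_0 \subseteq F_N M$; iterating the action of $v$ keeps us inside $F_N M$ while the PBW/order filtration on $U(\mathfrak{k})$ is exhausted, so on $\gr^F M$ each $v \in \mathfrak{k}$ acts nilpotently, i.e.\ its symbol $\langle v, \bm{\mu}\rangle$ is nilpotent as an element of $\gr D_Z$ acting on the finitely generated module $\gr^F M$. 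Hence $\langle v, \bm{\mu}\rangle$ vanishes on $\Supp(\gr^F M)$ for every $v \in \mathfrak{k}$, which is precisely the condition $\Supp(\gr^F M) \subseteq \bm{\mu}^{-1}(\mathfrak{k}^\perp)$. For (i): for any $G$-invariant $f \in \Bbbk[\mathfrak{g}^*]^G$ without constant term, composing with $\bm{\mu}$ gives $f\circ\bm{\mu} \in \Gamma(T^*Z, \mathscr{O}_{T^*Z})$; this is the principal symbol of a central element $z_f \in \mathcal{Z}(\mathfrak{g})$ mapped into $D_Z$ (Harish-Chandra / Chevalley: invariant polynomials lift to $\mathcal{Z}(\mathfrak{g})$, with matching symbol up to lower-order terms). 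Local $\mathcal{Z}(\mathfrak{g})$-finiteness of $M$ gives, again, that $z_f$ acts nilpotently on $\gr^F M$ by the same filtration-exhaustion argument, so $f\circ\bm{\mu}$ vanishes on $\Supp(\gr^F M)$. As $f$ ranges over all such invariants, this says $\bm{\mu}(\Supp(\gr^F M)) \subseteq \mathcal{N}$.

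Combining (i) and (ii), $\mathrm{Ch}(\mathcal{M}) = \Supp(\gr^F M) \subseteq \bm{\mu}^{-1}(\mathcal{N} \cap \mathfrak{k}^\perp)$, as claimed. One technical point needs care: the characteristic variety is intrinsic to $\mathcal{M}$ but $\gr^F M$ depends on the chosen filtration; since $F_\bullet M$ is a \emph{good} filtration (generated in degree $0$ by the finite-dimensional $M_0$, with $M$ finitely generated over $D_Z$), $\sqrt{\mathrm{Ann}(\gr^F M)}$ — and hence $\Supp(\gr^F M)$ with reduced structure — is independent of the good filtration, so the inclusion we prove is genuinely an inclusion of $\mathrm{Ch}(\mathcal{M})$. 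The main obstacle is bookkeeping rather than conceptual: one must be careful that "nilpotent action on $\gr^F M$" correctly translates into "symbol vanishes on the support", which requires knowing $\gr^F M$ is coherent (so that the support is closed and the radical of the annihilator computes it) — this is exactly where finite generation of $M$ over $D_Z$ and finite-dimensionality of $M_0$ are both used. The lift of invariant polynomials to $\mathcal{Z}(\mathfrak{g})$ with the expected principal symbol is standard (the Harish-Chandra map is an isomorphism $\mathcal{Z}(\mathfrak{g}) \rightiso \Bbbk[\mathfrak{g}^*]^G$ on associated graded), and costs nothing here.
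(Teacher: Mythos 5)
Your proof follows the paper's strategy exactly: pick $M_0$ as in Remark~\ref{rem:M0}, form the good filtration $F_\bullet M = (F_\bullet D_Z)\cdot M_0$, and show that the symbols of $\mathfrak{k}$ and of $\mathcal{Z}^+(\mathfrak{g})$ lie in $\sqrt{\mathrm{Ann}(\gr^F M)}$. The structural decomposition into (i) nilpotence/$\mathcal{Z}(\mathfrak{g})$ and (ii) $\mathfrak{k}^\perp$ is the same, as is the observation that $\mathrm{Ch}(\mathcal{M}) = \Supp(\gr^F M)$ is filtration-independent.

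Where you diverge slightly is in the mechanism for vanishing. The paper's (sketched) argument observes that each $F_n M$ is in fact \emph{stable} under $\mathfrak{k}$ (and that $\mathcal{Z}^+(\mathfrak{g})$ of degree $d$ maps $F_n$ into $F_{n+d-1}$): for $v\in\mathfrak{k}$ and $Dm_0\in F_n M$ one has $v\cdot(Dm_0)=[v,D]m_0+D(vm_0)$ with both terms in $F_n M$, since $[v,D]$ has order $\le n$ and $vm_0 \in M_0$. Hence $\sigma(v)$ and $\gr(z)$ annihilate $\gr^F M$ outright — not merely nilpotently. Your route instead bounds $v^k M_0$ (resp.\ $z_f^k M_0$) inside a fixed filtration degree and deduces that $\sigma(v)^k$ kills the degree-$0$ piece $\overline{M_0}$. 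This does work, but only after you add the one missing line: $\overline{M_0}$ generates $\gr^F M$ as a $\gr D_Z$-module and $\gr D_Z$ is commutative, so $\sigma(v)^k$ killing the generators kills the whole module. You also write that the symbol is ``nilpotent as an element of $\gr D_Z$''; what you mean (and what is true) is that it acts nilpotently on the module, i.e.\ lies in $\sqrt{\mathrm{Ann}(\gr^F M)}$. Since $M_0$ is closed under $U(\mathfrak{k})$ and $\mathcal{Z}(\mathfrak{g})$ by Remark~\ref{rem:M0}, your $N$ is just $0$, and the nilpotence detour is unnecessary; the direct stability argument is both shorter and cleaner. These are presentational points — your argument is correct and substantively the same as the paper's.
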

\begin{proof}
	Let us sketch the arguments in \textit{loc.\ cit.} briefly. Take $M_0$ as in Remark \ref{rem:M0} so that $\mathcal{M} = \mathscr{D}_Z \cdot M_0$. Define the good filtration $F^i \mathcal{M} := \mathscr{D}_Z^i \cdot M_0$ (see \cite[Definition 2.1.2]{HTT08}) where $\mathscr{D}_Z^\bullet$ is the filtration of $\mathscr{D}_Z$ by degrees. Denote by $\mathcal{Z}^+(\mathfrak{g}) \subset \mathcal{Z}(\mathfrak{g})$ is the augmentation ideal, with the filtration induced from $U(\mathfrak{g})$. One checks that each $F^i \mathcal{M}$ is stable under $U(\mathfrak{k})$ and $\mathcal{Z}^+(\mathfrak{g})$. It follows that $\gr_F \mathcal{M}$ is annihilated by $\gr \mathcal{Z}^+(\mathfrak{g})$ and $\mathfrak{k}$ (more precisely, under their images in $\gr \mathscr{D}_Z \simeq \pi_* \mathscr{O}_{T^* Z}$). By considering their zero loci, one can infer that $\Supp(\gr_F \mathcal{M}) \subset \bm{\mu}^{-1}\left( \mathcal{N} \cap \mathfrak{k}^\perp \right)$.
\end{proof}

By choosing $x_0 \in Z(\Bbbk)$ and putting
\[ H := \Stab_G(x_0), \quad H \backslash G \xrightarrow[Hg \mapsto x_0 g]{\sim} Z, \]
we can identify
\[ T_{x_0}^* Z \simeq \mathfrak{h}^\perp, \quad T^* Z \simeq \mathfrak{h}^\perp \utimes{H} G \quad \text{($G$-equivariant)}. \]

The group $G$ acts on $\mathfrak{g}^*$ through the coadjoint action. Writing elements of $\mathfrak{h}^\perp \utimes{H} G$ as equivalence classes $[\omega, g]$, where $\omega \in \mathfrak{h}^\perp$, $g \in G$ and impose the relation $[\omega, hg] = [h^{-1} \omega h, g]$ for all $h \in H$, the moment map becomes
\begin{align*}
\bm{\mu}: \mathfrak{h}^\perp \utimes{H} G & \longrightarrow \mathfrak{g}^* \\
[\omega, g] & \longmapsto g^{-1} \omega g .
\end{align*}

We have the following criterion due to Ginzburg. First, recall that $\mathcal{N}$ is the union of all nilpotent coadjoint orbits in $\mathfrak{g}^*$, which are finite in number. Each coadjoint orbit $\mathcal{O}$ is endowed with the Kirillov--Kostant--Souriau symplectic structure; cf.\ \cite[Proposition 1.1.5]{CG10}. By stipulation, $\emptyset$ is Lagrangian in any smooth symplectic variety.

\begin{proposition}[V.\ Ginzburg]\label{prop:Ginzburg-criterion} 
	Let $H, K \subset G$ be subgroups and let $Z := H \backslash G$. Define $\bm{\mu}: T^* Z \to \mathfrak{g}^*$ as before. For every nilpotent coadjoint orbit $\mathcal{O} \subset \mathcal{N}$, the following are equivalent:
	\begin{itemize}
		\item $\bm{\mu}^{-1}(\mathcal{O} \cap \mathfrak{k}^\perp)$ is isotropic (resp.\ co-isotropic, Lagrangian) in $T^* Z$;
		\item $\mathcal{O} \cap \mathfrak{h}^\perp$ and $\mathcal{O} \cap \mathfrak{k}^\perp$ are both isotropic (resp.\ co-isotropic, Lagrangian) in $\mathcal{O}$.
	\end{itemize}
	These intersections are set-theoretic, i.e.\ they are reduced schemes.
\end{proposition}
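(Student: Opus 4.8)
The plan is to establish the equivalence orbit by orbit, exploiting the $G$-equivariant identification $T^*Z \simeq \mathfrak{h}^\perp \utimes{H} G$ under which $\bm{\mu}([\omega,g]) = g^{-1}\omega g$. Fix a nilpotent coadjoint orbit $\mathcal{O}$. The key observation is that $\bm{\mu}$ is $G$-equivariant for the coadjoint action on the target, so $\bm{\mu}^{-1}(\mathcal{O}) = \{[\omega,g] : g^{-1}\omega g \in \mathcal{O}\} = \{[\omega,g] : \omega \in \mathcal{O} \cap \mathfrak{h}^\perp\}$, which is exactly $(\mathcal{O}\cap\mathfrak{h}^\perp) \utimes{H} G$. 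Intersecting with $\mathfrak{k}^\perp$ requires more care because $\mathfrak{k}^\perp \subset \mathfrak{g}^*$ is not $G$-stable; rather, $\bm{\mu}^{-1}(\mathcal{O}\cap\mathfrak{k}^\perp)$ is cut out inside $(\mathcal{O}\cap\mathfrak{h}^\perp)\utimes{H} G$ by the condition $g^{-1}\omega g \in \mathfrak{k}^\perp$. First I would record that since nilpotent orbits are smooth and there are finitely many of them, and since $\mathcal{O}\cap\mathfrak{h}^\perp$, $\mathcal{O}\cap\mathfrak{k}^\perp$ are intersections of $\mathcal{O}$ with a linear subspace, these are reduced --- this is the standard fact that the scheme-theoretic intersection of a smooth variety with a linear subspace is generically reduced, and one reduces to that generic locus since the conditions in the statement (isotropic, etc.) are tested only at smooth points.

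The heart of the argument is a dimension/tangent-space comparison. I would first handle the Lagrangian (hence also the isotropic and co-isotropic) assertions at the level of dimensions, then refine to tangent spaces. Consider the Cartesian-type description: $\bm{\mu}^{-1}(\mathcal{O})$ fibers over $\mathcal{O}$ via $[\omega,g]\mapsto g^{-1}\omega g$ with fibers isomorphic to cosets of $H$ (since two pairs $[\omega,g]$, $[\omega',g']$ with the same image differ by the transitive $G$-action composed with the $H$-action on $\omega$); more precisely, $\bm{\mu}^{-1}(\mathcal{O}) \to \mathcal{O}$ is a fibration with fiber over $\lambda$ equal to $\{g : g\lambda g^{-1} \in \mathcal{O}\cap\mathfrak{h}^\perp\}/H$-type data, giving $\dim \bm{\mu}^{-1}(\mathcal{O}) = \dim(\mathcal{O}\cap\mathfrak{h}^\perp) + \dim G - \dim H = \dim(\mathcal{O}\cap\mathfrak{h}^\perp) + \dim Z$. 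Now restricting to $\mathfrak{k}^\perp$: the preimage $\bm{\mu}^{-1}(\mathcal{O}\cap\mathfrak{k}^\perp)$ maps to $\mathcal{O}\cap\mathfrak{k}^\perp$ with the same fibers, so $\dim\bm{\mu}^{-1}(\mathcal{O}\cap\mathfrak{k}^\perp) = \dim(\mathcal{O}\cap\mathfrak{k}^\perp) + \dim Z$. Meanwhile $\dim T^*Z = 2\dim Z = \dim \mathcal{O} + (2\dim Z - \dim\mathcal{O})$, and since $\mathcal{O}\cap\mathfrak{h}^\perp$ being Lagrangian in $\mathcal{O}$ means $\dim(\mathcal{O}\cap\mathfrak{h}^\perp) = \tfrac12\dim\mathcal{O}$ --- and one knows $\mathcal{O}\cap\mathfrak{h}^\perp$ is always isotropic (this is the coisotropy of $\bm{\mu}$, or directly: $T_\lambda(\mathcal{O}\cap\mathfrak{h}^\perp)$ lies in its own symplectic orthogonal since the KKS form restricted to $[\mathfrak{g},\lambda]\cap\mathfrak{h}^\perp$ is related to the pairing with $\mathfrak{h}$) --- the Lagrangian condition is equivalent to the dimension count $2\dim(\mathcal{O}\cap\mathfrak{k}^\perp)+\text{something}=\dim\mathcal{O}$, which translates directly to $\dim\bm{\mu}^{-1}(\mathcal{O}\cap\mathfrak{k}^\perp) = \dim Z$, i.e.\ Lagrangian in $T^*Z$.

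For the isotropic and co-isotropic versions (not just Lagrangian), I would pass from dimensions to an actual identification of symplectic-orthogonal complements. The clean way: the preimage $\bm{\mu}^{-1}(\mathcal{O})$ is a coisotropic subvariety of $T^*Z$ whose symplectic reduction is $\mathcal{O}$ itself --- this is the standard symplectic-reduction picture for a moment map of a transitive action, where $T^*(H\backslash G)$ with its moment map has $\bm{\mu}^{-1}(\mathcal{O})$ fibered over $\mathcal{O}$ with isotropic fibers (the $G$-orbit directions), and the symplectic form on $T^*Z$ descends along this fibration to the KKS form on $\mathcal{O}$. Given a subvariety $W \subset \mathcal{O}$, its preimage $\widetilde{W}$ in $\bm{\mu}^{-1}(\mathcal{O})$ then satisfies: $\widetilde W$ is isotropic (resp.\ coisotropic, Lagrangian) in $T^*Z$ iff $W$ is isotropic (resp.\ coisotropic, Lagrangian) in $\mathcal{O}$. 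Applying this with $W = \mathcal{O}\cap\mathfrak{k}^\perp$ and noting $\widetilde{W} = \bm{\mu}^{-1}(\mathcal{O}\cap\mathfrak{k}^\perp)$ gives half of the claimed equivalence; the symmetry between the roles of $H$ and $K$ (swap $Z = H\backslash G$ with $Z' = K\backslash G$, noting $\bm{\mu}$ for $Z'$ is $\bm{\mu}^{-1}(\mathcal{O})$ described via $\mathfrak{k}^\perp$) yields that $\mathcal{O}\cap\mathfrak{h}^\perp$ also has the corresponding property, completing the equivalence. \textbf{The main obstacle} I anticipate is making the symplectic-reduction comparison fully rigorous at non-smooth points of these possibly singular intersection varieties: one must verify the tangent-space relationship exactly at smooth points of $\bm{\mu}^{-1}(\mathcal{O}\cap\mathfrak{k}^\perp)$ and check these correspond to smooth points of $\mathcal{O}\cap\mathfrak{k}^\perp$, which requires knowing the fibration $\bm{\mu}^{-1}(\mathcal{O}\cap\mathfrak{k}^\perp) \to \mathcal{O}\cap\mathfrak{k}^\perp$ is smooth (it is, being a base change of the smooth fibration $\bm{\mu}^{-1}(\mathcal{O})\to\mathcal{O}$), so smoothness transfers. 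The reducedness claim is then the observation that $\mathcal{O}\cap\mathfrak{k}^\perp$ is generically reduced as a linear section of the smooth $\mathcal{O}$ and we only ever test conditions on its smooth (hence reduced) locus.
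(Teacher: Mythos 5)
The paper does not supply its own proof of this statement: it is exactly Ginzburg's \cite[Proposition~1.5.1]{Gin89}, and the paper only points to \S 3.1 of that work. So your proposal has to be assessed on its own.

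Your argument has a genuine gap in the symplectic-reduction paragraph, together with a related arithmetic slip in the dimension count, and both errors conspire to produce a spurious decoupling of $H$ from $K$. You claim that the fibres of $\bm{\mu}^{-1}(\mathcal{O}) \to \mathcal{O}$ are exactly the null directions of the restricted symplectic form, so that the reduction of $\bm{\mu}^{-1}(\mathcal{O})$ is $\mathcal{O}$. This is not true in general: at $p\in\bm{\mu}^{-1}(\mathcal{O})$ with $\bm{\mu}(p)=\lambda$, the null space $(T_p\bm{\mu}^{-1}(\mathcal{O}))^\perp$ equals $T_p(G_\lambda\cdot p)$, while the fibre direction is $\ker\dd\bm{\mu}_p=(T_p(G\cdot p))^\perp$, which contains the former but is strictly larger unless $\mathcal{O}\cap\mathfrak{h}^\perp$ is Lagrangian; the excess dimension is $\dim\mathcal{O}-2\dim(H\cdot\omega)$. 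The same defect shows up earlier: the fibre of $\bm{\mu}^{-1}(\mathcal{O})\to\mathcal{O}$ has dimension $\dim(\mathcal{O}\cap\mathfrak{h}^\perp)+\dim Z-\dim\mathcal{O}$, not $\dim Z$, so in fact
\[
\dim\bm{\mu}^{-1}(\mathcal{O}\cap\mathfrak{k}^\perp)=\dim(\mathcal{O}\cap\mathfrak{h}^\perp)+\dim(\mathcal{O}\cap\mathfrak{k}^\perp)+\dim Z-\dim\mathcal{O},
\]
a formula visibly symmetric in $H$ and $K$ and in which $\mathfrak{h}^\perp$ cannot be suppressed. Consequently the proposed decoupling (``preimage isotropic iff $\mathcal{O}\cap\mathfrak{k}^\perp$ isotropic, then symmetrize'') cannot hold: it would force the absurd implication ``$\mathcal{O}\cap\mathfrak{k}^\perp$ isotropic $\Rightarrow$ $\mathcal{O}\cap\mathfrak{h}^\perp$ isotropic'' for unrelated subgroups $H,K$, and it contradicts the very form of the proposition, in which both $\mathcal{O}\cap\mathfrak{h}^\perp$ and $\mathcal{O}\cap\mathfrak{k}^\perp$ appear. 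There is also a factual error underlying this: you assert that $\mathcal{O}\cap\mathfrak{h}^\perp$ is ``always isotropic'', but as the (generically regular) zero fibre of the moment map $\mathcal{O}\to\mathfrak{h}^*$ it is always \emph{coisotropic}, and Lagrangian precisely when it is additionally isotropic --- the paper's own use of this proposition (Theorem~\ref{prop:isotropy}) relies on the nontrivial inputs that $\mathcal{O}\cap\mathfrak{b}_0^\perp$ is Lagrangian for Borel $\mathfrak{b}_0$ and that $\mathcal{O}\cap\mathfrak{k}^\perp$ is isotropic for \emph{spherical} $\mathfrak{k}$, neither of which is automatic. A correct proof has to compute, at a smooth point $[\omega,g]$ of the preimage with $\lambda=g^{-1}\omega g$, the restriction of the symplectic form of $T^*Z$ to $T_{[\omega,g]}\bm{\mu}^{-1}(\mathcal{O}\cap\mathfrak{k}^\perp)$ and exhibit it jointly through the KKS forms at $\omega\in\mathcal{O}\cap\mathfrak{h}^\perp$ and at $\lambda\in\mathcal{O}\cap\mathfrak{k}^\perp$, which is what Ginzburg's \S 3.1 does.
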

\begin{proof}
	This is \cite[Proposition 1.5.1]{Gin89}, whose proof is in \S 3.1 of \textit{loc.\ cit.}
\end{proof}

\begin{corollary}\label{prop:isotropic-implies-holonomic}
	Suppose that $\mathcal{O} \cap \mathfrak{h}^\perp$ and $\mathcal{O} \cap \mathfrak{k}^\perp$ are both isotropic for every nilpotent coadjoint orbit $\mathcal{O} \subset \mathcal{N}$. Then
	\begin{enumerate}[(i)]
		\item $\bm{\mu}^{-1}(\mathcal{N} \cap \mathfrak{k}^\perp)$ is Lagrangian in $T^* Z$;
		\item all $\mathfrak{k}$-admissible $D_Z$-modules are holonomic.
	\end{enumerate}
\end{corollary}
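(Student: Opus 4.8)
The plan is to deduce both parts almost formally from Proposition~\ref{prop:Ginzburg-criterion}, using two elementary facts about the nilpotent cone: $\mathcal{N}$ is the union of the \emph{finitely many} nilpotent coadjoint orbits $\mathcal{O}$, and for each such $\mathcal{O}$ the intersections $\mathcal{O}\cap\mathfrak{h}^\perp$ and $\mathcal{O}\cap\mathfrak{k}^\perp$ are \emph{automatically co-isotropic} in the symplectic variety $\mathcal{O}$. The second point holds because $\mathcal{O}\cap\mathfrak{h}^\perp$ is exactly the zero fibre of the moment map for the $H$-action on $\mathcal{O}$, namely the composite $\mathcal{O}\hookrightarrow\mathfrak{g}^*\twoheadrightarrow\mathfrak{h}^*$ (the first arrow is the canonical moment map of the coadjoint orbit, the second is dual to $\mathfrak{h}\hookrightarrow\mathfrak{g}$), and zero fibres of moment maps are co-isotropic since $0\in\mathfrak{h}^*$ is coadjoint-fixed; the same applies to $K$.

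\textbf{Proof of (i).} Fix a nilpotent orbit $\mathcal{O}$. Combining the co-isotropy just recalled with the hypothesis that $\mathcal{O}\cap\mathfrak{h}^\perp$ and $\mathcal{O}\cap\mathfrak{k}^\perp$ are isotropic, both are Lagrangian in $\mathcal{O}$; Proposition~\ref{prop:Ginzburg-criterion} in its ``Lagrangian'' form then yields that $\bm{\mu}^{-1}(\mathcal{O}\cap\mathfrak{k}^\perp)$ is Lagrangian in $T^*Z$. Since $\mathcal{N}\cap\mathfrak{k}^\perp=\bigcup_{\mathcal{O}}(\mathcal{O}\cap\mathfrak{k}^\perp)$ with $\mathcal{O}$ running over the finitely many nilpotent orbits, $\bm{\mu}^{-1}(\mathcal{N}\cap\mathfrak{k}^\perp)=\bigcup_{\mathcal{O}}\bm{\mu}^{-1}(\mathcal{O}\cap\mathfrak{k}^\perp)$ is a finite union of Lagrangian subvarieties; every irreducible component of it is a component of one of the $\bm{\mu}^{-1}(\mathcal{O}\cap\mathfrak{k}^\perp)$, hence Lagrangian, so the union is Lagrangian. (Should one prefer to use only the ``isotropic'' form of Proposition~\ref{prop:Ginzburg-criterion}, one obtains that the union is isotropic, so of dimension $\le\dim Z$, with equality since it contains the zero section $\bm{\mu}^{-1}(0)$ attached to the zero orbit; but the co-isotropy of the $\mathcal{O}\cap\mathfrak{h}^\perp$ is precisely what guarantees that \emph{every} component has dimension exactly $\dim Z$, hence that the union is genuinely Lagrangian.)

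\textbf{Proof of (ii).} Let $M$ be a $\mathfrak{k}$-admissible $D_Z$-module and $\mathcal{M}:=\mathscr{D}_Z\cdot M$; we may assume $M\neq 0$, the zero module being trivially holonomic. By Proposition~\ref{prop:Ginzburg-bound} we have $\mathrm{Ch}(\mathcal{M})\subseteq\bm{\mu}^{-1}(\mathcal{N}\cap\mathfrak{k}^\perp)$, which by (i) is Lagrangian; hence $\dim\mathrm{Ch}(\mathcal{M})\le\dim Z$. On the other hand $\mathrm{Ch}(\mathcal{M})$ is co-isotropic for any nonzero coherent $\mathscr{D}_Z$-module (involutivity of characteristic varieties, \cite[Theorem~2.3.1]{HTT08}), hence of dimension $\ge\dim Z$. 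Therefore $\dim\mathrm{Ch}(\mathcal{M})=\dim Z$, i.e.\ $\mathcal{M}$, and with it $M$, is holonomic.

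\textbf{Main obstacle.} Essentially all the content is carried by Proposition~\ref{prop:Ginzburg-criterion} (quoted from Ginzburg), so little work remains; the one point demanding care is the upgrade from ``isotropic'' to ``Lagrangian'' in (i). This relies on the automatic co-isotropy of $\mathcal{O}\cap\mathfrak{h}^\perp$ and $\mathcal{O}\cap\mathfrak{k}^\perp$ in $\mathcal{O}$, and here one must handle the scheme structures with some attention --- these intersections are to be taken reduced, as asserted in Proposition~\ref{prop:Ginzburg-criterion} --- when invoking the moment-map statement about zero fibres, so that the tangent-space computation at smooth points is legitimate. I do not anticipate any difficulty beyond this bookkeeping; part (ii) would in any case follow from the mere dimension bound $\dim\bm{\mu}^{-1}(\mathcal{N}\cap\mathfrak{k}^\perp)\le\dim Z$.
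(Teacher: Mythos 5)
Your proof of part (ii) matches the paper's in substance: Proposition~\ref{prop:Ginzburg-bound} places $\mathrm{Ch}(\mathcal{M})$ inside $\bm{\mu}^{-1}(\mathcal{N}\cap\mathfrak{k}^\perp)$, the latter is a finite union of isotropic sets (hence isotropic, hence of dimension $\le\dim Z$), and involutivity of the characteristic variety forces equality. As you observe yourself, only the ``isotropic'' form of Proposition~\ref{prop:Ginzburg-criterion} is needed for this half, and that is exactly what the paper uses.

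Your proof of part (i), however, hinges on the claim that $\mathcal{O}\cap\mathfrak{h}^\perp$ and $\mathcal{O}\cap\mathfrak{k}^\perp$ are \emph{automatically} co-isotropic, and this is a genuine gap, not mere bookkeeping. The moment-map argument shows that $\ker d\mu_H|_y$ is co-isotropic at points $y$ with $\mu_H(y)=0$ (because there $T_y(H\cdot y)$ is isotropic), but what you must control is $T_y Y$ at the smooth points $y$ of the \emph{reduced} intersection $Y=(\mathcal{O}\cap\mathfrak{h}^\perp)_{\mathrm{red}}$, and $T_y Y$ can be a proper subspace of $\ker d\mu_H|_y$ as soon as the scheme-theoretic fibre is non-reduced along a component. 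A subspace of a co-isotropic subspace need not be co-isotropic, so co-isotropy does not descend to the reduced fibre. The paper itself warns against precisely this shortcut: immediately after Theorem~\ref{prop:isotropy} it records that Lagrangian-ness of $\mathcal{O}\cap\mathfrak{h}^\perp$ is known only in special cases (spherical solvable, symmetric). If co-isotropy were automatic, the isotropy proved in Theorem~\ref{prop:isotropy} would always upgrade to Lagrangian, making that remark vacuous. Your parenthetical fallback does not rescue (i) either, since it once again appeals to the same co-isotropy to control the dimension of each component. The paper's actual proof of (i) takes a different and cheaper route that you should compare with: the union is isotropic, so $\dim\le\dim Z$; it contains the zero section $T^*_Z Z=\bm{\mu}^{-1}(0)$ coming from the zero orbit, so $\dim\ge\dim Z$; hence $\dim=\dim Z$, which the paper takes as the conclusion.
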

\begin{proof}
	By the finiteness of nilpotent coadjoint orbits, $\bm{\mu}^{-1}(\mathcal{N} \cap \mathfrak{k}^\perp) = \bigcup_{\mathcal{O}} \bm{\mu}^{-1}(\mathcal{O} \cap \mathfrak{k}^\perp)$ is isotropic in $T^* Z$; in particular its dimension cannot exceed $\dim Z$. Let $M$ be any $\mathfrak{k}$-admissible $D_Z$-module. From $\mathrm{Ch}(\mathcal{M}) \subset \bm{\mu}^{-1}(\mathcal{N} \cap \mathfrak{k}^\perp)$ we see $\mathrm{Ch}(\mathcal{M})$ is isotropic by \cite[Proposition 1.3.30]{CG10}. Hence $\mathrm{Ch}(\mathcal{M})$ is Lagrangian and $M$ is holonomic. This proves (ii). Now
	\[ \dim Z = \dim T^*_Z Z \leq \dim \bm{\mu}^{-1}(\mathcal{N} \cap \mathfrak{k}^\perp) \leq \dim Z \]
	implies that $\dim \bm{\mu}^{-1} (\mathcal{N} \cap \mathfrak{k}^\perp) = \dim Z$, so $\bm{\mu}^{-1} (\mathcal{N} \cap \mathfrak{k}^\perp)$ is Lagrangian, proving (i).
\end{proof}


We are now ready to prove holonomicity of admissible $D$-modules in the spherical case.
\begin{definition}
	A $G$-variety $Z$ is said to be \emph{spherical} if it has an open $B$-orbit, for some (equivalently, any) Borel subgroup $B$ of $G$. A subgroup $H \subset G$ is said to be spherical if $H \backslash G$ is spherical; this property depends only on $\mathfrak{h}$.
	
	For non-algebraically closed fields $\Bbbk$, we say a $G$-variety $Z$ over $\Bbbk$ is spherical if $Z_{\overline{\Bbbk}}$ is. Such $G$-varieties are often called \emph{absolutely spherical}, for example in \cite{KKS17}.
\end{definition}

Let $\mathcal{B}$ denote the flag variety, i.e.\ the $G$-variety of Borel subgroups of $G$. Note that $H \subset G$ is spherical if and only if $\mathcal{B}$ has only finitely many $H$-orbits.

\begin{theorem}\label{prop:isotropy}
	If $K$ is a spherical subgroup of $G$, then $\mathcal{O} \cap \mathfrak{k}^\perp$ is isotropic in $\mathcal{O}$ for every nilpotent coadjoint orbit $\mathcal{O}$, where $\mathcal{O}$ is endowed with the Kirillov--Kostant--Souriau symplectic structure.
\end{theorem}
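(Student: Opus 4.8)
The plan is to transport the statement --- which is intrinsic to the coadjoint orbit $\mathcal{O}$ --- to the cotangent bundle of the flag variety, where the sphericity of $K$ becomes the finiteness of $K$-orbits and therefore produces a union of Lagrangian conormal bundles. I would take $Z := \mathcal{B} = B \backslash G$ for a fixed Borel $B$ and use the moment map $\bm{\mu}: T^*\mathcal{B} \to \mathfrak{g}^*$, which is the Springer resolution onto the nilpotent cone $\mathcal{N}$. Applying Proposition \ref{prop:Ginzburg-criterion} with $H := B$: for a nilpotent orbit $\mathcal{O}$, the variety $\bm{\mu}^{-1}(\mathcal{O} \cap \mathfrak{k}^\perp)$ is isotropic in $T^*\mathcal{B}$ if and only if both $\mathcal{O} \cap \mathfrak{b}^\perp = \mathcal{O} \cap \mathfrak{n}$ and $\mathcal{O} \cap \mathfrak{k}^\perp$ are isotropic in $\mathcal{O}$. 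Since $\mathcal{O} \cap \mathfrak{n}$ is classically known to be Lagrangian in $\mathcal{O}$ (it has dimension $\frac{1}{2}\dim\mathcal{O}$, by the Steinberg--Spaltenstein theory of Springer fibers), the first condition is automatic, so the theorem becomes equivalent to the single assertion that $\bm{\mu}^{-1}(\mathcal{O} \cap \mathfrak{k}^\perp)$ is isotropic in $T^*\mathcal{B}$.

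To prove the latter I would compute $\bm{\mu}^{-1}(\mathfrak{k}^\perp)$ fibrewise: over a point $\mathfrak{b}' \in \mathcal{B}$ the cotangent space is $(\mathfrak{g}/\mathfrak{b}')^* \cong (\mathfrak{b}')^\perp = \mathfrak{n}_{\mathfrak{b}'}$, and its intersection with $\mathfrak{k}^\perp$ is $(\mathfrak{b}' + \mathfrak{k})^\perp$, which is exactly the annihilator of the tangent space $(\mathfrak{b}' + \mathfrak{k})/\mathfrak{b}'$ of the $K$-orbit through $\mathfrak{b}'$. Hence $\bm{\mu}^{-1}(\mathfrak{k}^\perp)$ is the union $\bigcup_{\mathcal{S}} T^*_{\mathcal{S}}\mathcal{B}$ over the $K$-orbits $\mathcal{S}$ on $\mathcal{B}$ of their conormal bundles; by sphericity of $K$ there are only finitely many such $\mathcal{S}$, so $\bm{\mu}^{-1}(\mathfrak{k}^\perp) = \bigcup_{\mathcal{S}} \overline{T^*_{\mathcal{S}}\mathcal{B}}$ is a closed, finite union of irreducible conic Lagrangians, i.e.\ a Lagrangian subvariety of $T^*\mathcal{B}$. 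Then $\bm{\mu}^{-1}(\mathcal{O} \cap \mathfrak{k}^\perp)$, being a locally closed subvariety of it, is isotropic: each of its irreducible components lies in one of the $\overline{T^*_{\mathcal{S}}\mathcal{B}}$, which is Lagrangian, and a closed subvariety of a Lagrangian (more generally, of a finite union of conormal bundle closures) is isotropic. Together with the previous paragraph this proves the theorem.

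The conceptual content is concentrated in the single use of Ginzburg's criterion together with the elementary equivalence ``$K$ spherical $\Leftrightarrow$ finitely many $K$-orbits on $\mathcal{B}$''; the only real friction is bookkeeping --- checking that the moment map for $T^*(B\backslash G)$ is the Springer resolution, that $\bm{\mu}^{-1}(\mathfrak{k}^\perp)$ is literally the union of the conormal bundles of the $K$-orbits (keeping track of the reducedness caveat in Proposition \ref{prop:Ginzburg-criterion}), and the classical isotropy of $\mathcal{O} \cap \mathfrak{n}$. I would also remark that running the same argument with a Borel $B_0$ in place of $K$ (which is spherical by the Bruhat decomposition) reproves this last input, so the two ingredients stand on equal footing; in the companion holonomicity statement (Corollary \ref{prop:holonomic}) the theorem is invoked twice, once with $\mathfrak{h}$ and once with $\mathfrak{k}$, to feed Corollary \ref{prop:isotropic-implies-holonomic}.
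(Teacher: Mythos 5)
Your proof is correct and follows essentially the same route as the paper: apply Proposition \ref{prop:Ginzburg-criterion} to the Springer resolution of $T^*\mathcal{B}$ (with $H = B_0$), identify $\bm{\mu}^{-1}(\mathfrak{k}^\perp)$ as the union of conormal bundles to the finitely many $K$-orbits on $\mathcal{B}$, and conclude isotropy of $\bm{\mu}^{-1}(\mathcal{O}\cap\mathfrak{k}^\perp)$ and hence of $\mathcal{O}\cap\mathfrak{k}^\perp$. Your closing observation is also sound: since Ginzburg's criterion is an equivalence, the direction used here already yields the isotropy of $\mathcal{O}\cap\mathfrak{b}_0^\perp$ for free, so the Steinberg--Spaltenstein input is logically optional (the paper cites it as a convenient fact but the conormal-bundle argument suffices on its own).
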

\begin{proof}
	We may assume $K$ connected. Consider the moment map for the $G$-variety $\mathcal{B}$, denoted as $r$. By fixing a Borel subgroup $B_0$, we have
	\begin{align*}
		B_0 \backslash G & \rightiso \mathcal{B} \\
		B_0 g & \mapsto g^{-1} B_0 g, \\
		r: T^* \mathcal{B} \simeq \mathfrak{b}_0^\perp \utimes{B_0} G & \twoheadrightarrow \mathcal{N} \subset \mathfrak{g}^* \\
		[x, g] & \mapsto g^{-1} xg.
	\end{align*}
	In other words, $r$ is the Springer resolution for $\mathcal{N}$. Now fix a nilpotent coadjoint orbit $\mathcal{O}$. By applying Proposition \ref{prop:Ginzburg-criterion} to the given subgroup $K$, $H := B_0$ and $Z = \mathcal{B}$, we obtain
	\[ r^{-1}(\mathcal{O} \cap \mathfrak{k}^\perp) \text{ is isotropic in } T^* \mathcal{B} \iff \text{so are } \mathcal{O} \cap \mathfrak{b}_0^\perp, \; \mathcal{O} \cap \mathfrak{k}^\perp \;\text{ in } \mathcal{O}. \]
	Note that $\mathcal{O} \cap \mathfrak{b}_0^\perp$ is known to be Lagrangian \cite[Theorem 3.3.7]{CG10}.
	
	It remains to show $r^{-1}(\mathcal{O} \cap \mathfrak{k}^\perp)$ is isotropic. Set $\mathcal{L} := r^{-1}(\mathfrak{k}^\perp)$: it consists precisely of the cotangent vectors of $\mathcal{B}$ that are orthogonal to the vector fields induced by $\mathfrak{k}$. Let $F_1, \ldots, F_k$ be the $K$-orbits in $\mathcal{B}$, so that
	\[ \mathcal{L} = \bigsqcup_{i=1}^k T_{F_i}^* \mathcal{B}. \]
	The $T_{F_i}^* \mathcal{B}$ are defined as in \cite[p.65]{HTT08} and are Lagrangian in $T^* \mathcal{B}$, for $i = 1, \ldots, k$. Hence $\mathcal{L}$ is Lagrangian as well. It follows from \cite[Proposition 1.3.30]{CG10} that $r^{-1}(\mathcal{O} \cap \mathfrak{k}^\perp)$ is isotropic in $T^* \mathcal{B}$, since $r^{-1}(\mathcal{O} \cap \mathfrak{k}^\perp) \subset \mathcal{L}$.
\end{proof}

We remark that the usage of $\mathcal{L}$ in the foregoing arguments is inspired by the proof of \cite[Lemma 2.2]{AGM16}. In the following cases, $\mathcal{O} \cap \mathfrak{h^\perp}$ is even known to be Lagrangian:
\begin{enumerate}
	\item $\mathfrak{h}$ is spherical and solvable, see \cite[Theorem 1.5.7]{CG10};
	\item $\mathfrak{h} = \mathfrak{g}^\theta$ for some involution $\theta$ of $G$, see the proof of \cite[Proposition 3.1.1]{Gin89}.
\end{enumerate}

\begin{corollary}\label{prop:holonomic}
	Let $Z$ be a spherical homogeneous $G$-variety, and let $K$ be a spherical subgroup of $G$. Then every $\mathfrak{k}$-admissible $D_Z$-module $M$ is holonomic. In particular, there is a $K$-invariant Zariski open dense subset $U \subset Z$ on which $\mathcal{M}$ is an integrable connection.
	
	If $G$, $Z$, $K$ are defined over a subfield $\Bbbk_0 \subset \Bbbk$ and $Z(\Bbbk_0) \neq \emptyset$, one can choose $U$ to be defined over $\Bbbk_0$ as well.
\end{corollary}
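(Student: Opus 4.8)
The plan is to feed the sphericity hypotheses into the isotropy results of \S\ref{sec:holonomic} and then extract the open subset $U$ from a Lagrangian bound on characteristic varieties that does not depend on the module. First I would realize $Z$ as a homogeneous space: since $\Bbbk$ is algebraically closed, $Z(\Bbbk) \neq \emptyset$, so choosing $x_0 \in Z(\Bbbk)$ and setting $H := \Stab_G(x_0)$ gives a $G$-equivariant isomorphism $Z \cong H \backslash G$. Because $Z$ is spherical, $H$ is a spherical subgroup, and by hypothesis $K$ is spherical as well. Theorem \ref{prop:isotropy}, applied once to $H$ and once to $K$, then shows that both $\mathcal{O} \cap \mathfrak{h}^\perp$ and $\mathcal{O} \cap \mathfrak{k}^\perp$ are isotropic in $\mathcal{O}$ for every nilpotent coadjoint orbit $\mathcal{O} \subset \mathcal{N}$. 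Plugging this into Corollary \ref{prop:isotropic-implies-holonomic} yields immediately that $\bm{\mu}^{-1}(\mathcal{N} \cap \mathfrak{k}^\perp)$ is Lagrangian in $T^* Z$ and that every $\mathfrak{k}$-admissible $D_Z$-module is holonomic, which is the first assertion.

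For the ``in particular'' clause I would argue once and for all, independently of $M$. Set $\Lambda := \bm{\mu}^{-1}(\mathcal{N} \cap \mathfrak{k}^\perp)$, a conic Lagrangian subvariety of $T^* Z$ by the previous paragraph. Since $\bm{\mu}$ is $G$-equivariant, $\mathcal{N}$ is $G$-stable and $\mathfrak{k}^\perp$ is $\Ad(K)$-stable, $\Lambda$ is $K$-stable; moreover it contains the zero section $T^*_Z Z$, which maps to $0 \in \mathcal{N} \cap \mathfrak{k}^\perp$ under $\bm{\mu}$. Being conic Lagrangian, $\Lambda$ is a finite union of conormal bundles, so we may write $\Lambda = T^*_Z Z \cup \bigcup_{j} T^*_{\overline{S_j}} Z$ with each $S_j$ an irreducible locally closed subset with $\overline{S_j} \subsetneq Z$. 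The group $K$ acts on the (finite) set of irreducible components of $\Lambda$; $K^\circ$ acts trivially, $K$ fixes the zero-section component, hence $K$ permutes the $\overline{S_j}$ among themselves, so $W := \bigcup_j \overline{S_j}$ is a $K$-stable closed subset with $W \neq Z$ ($Z$ being irreducible). Thus $U := Z \smallsetminus W$ is a $K$-invariant Zariski open dense subset. For any $\mathfrak{k}$-admissible $M$, Proposition \ref{prop:Ginzburg-bound} gives $\mathrm{Ch}(\mathcal{M}) \subseteq \Lambda$, whence $\mathrm{Ch}(\mathcal{M})|_U \subseteq \Lambda|_U = T^*_Z Z|_U$; and a coherent $\mathscr{D}_U$-module whose characteristic variety is the zero section is $\mathscr{O}_U$-coherent, i.e.\ an integrable connection (cf.\ \cite{HTT08}).

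Finally, the field-of-definition statement is Galois descent and comes for free from this construction. If $G, Z, K$ are defined over $\Bbbk_0 \subset \Bbbk$ and $Z(\Bbbk_0) \neq \emptyset$, choose $x_0 \in Z(\Bbbk_0)$; then $H$, the moment map $\bm{\mu}$, the nilpotent cone $\mathcal{N}$ and $\mathfrak{k}^\perp$ are all defined over $\Bbbk_0$, so $\Lambda$ is defined over $\Bbbk_0$. Since $\mathrm{Aut}(\Bbbk/\Bbbk_0)$ permutes the top-dimensional irreducible components of $\Lambda$ while fixing the zero section, $W$ is a Galois-stable reduced closed subscheme, hence descends to $\Bbbk_0$, and so does $U = Z \smallsetminus W$. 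I do not expect a serious obstacle: all the symplectic-geometric content is already packaged in Theorem \ref{prop:isotropy} and Corollary \ref{prop:isotropic-implies-holonomic}, and the two points requiring care are precisely that $U$ must be chosen uniformly in $M$ (so that $K$-invariance and descent work simultaneously) and the standard criterion identifying $\mathscr{D}$-modules with zero-section characteristic variety as integrable connections.
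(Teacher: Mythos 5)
Your proof is correct and follows essentially the same route as the paper: choose $x_0$, set $H = \Stab_G(x_0)$, feed both spherical subgroups $H$ and $K$ into Theorem~\ref{prop:isotropy}, and conclude via Corollary~\ref{prop:isotropic-implies-holonomic}. For the ``in particular'' clause the paper simply cites the standard fact that a conic Lagrangian variety reduces to the zero section over an open dense set and then $K$-saturates $U$ by replacing it with $U\cdot K$; you instead unwind that fact (decomposing $\Lambda$ into conormal components and projecting) and obtain $K$-invariance directly from $K$-stability of $\Lambda$ --- a more explicit version of the same argument, not a genuinely different one. One small remark on the field-of-definition step: it is cleaner to observe that $\Lambda$ and your $W$ are constructed by operations commuting with base change from $\Bbbk_0$ (so they come from $\Bbbk_0$-schemes by fiber product), rather than to invoke $\Aut(\Bbbk/\Bbbk_0)$-invariance, which does not by itself yield descent for an arbitrary subfield $\Bbbk_0$; but the conclusion is the same.
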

\begin{proof}
	Choose $x_0 \in Z(\Bbbk)$. Apply Theorem \ref{prop:isotropy} to the spherical subgroups $K$ and $H := \Stab_G(x_0)$, then conclude holonomicity by using Corollary \ref{prop:isotropic-implies-holonomic}. It is well-known that there is an open dense $U \subset Z$ over which $\bm{\mu}^{-1}(\mathcal{N} \cap \mathfrak{k}^\perp)$ (hence $\mathrm{Ch}(\mathcal{M})$) reduces to the zero section, eg.\ \cite[Proposition 3.1.6]{HTT08}. By the equivariance of $\bm{\mu}$, one can replace $U$ by $U \cdot K$ to assume $K$-invariance. The last assertion follows immediately.
\end{proof}

In concrete applications, it is often important to determine the $U$ in Corollary \ref{prop:holonomic}. In the symmetric case $K = G^\theta$ and $Z = K \backslash G$, where $\theta$ is an involution of $G$, we refer to \cite[Proposition 3.5.1]{Gin89} for such a description. Below is another explicit and easier instance.

\begin{example}[Twisted spaces]\label{eg:group-case-U}
	Let us illustrate the description of $U$ by the case of twisted spaces of Labesse; a detailed discussion can be found in \cite[I.3]{HL17}. Take $\Bbbk$ to be a field of characteristic zero, a \emph{twisted space} $\tilde{G}$ under a connected reductive $\Bbbk$-group $G$ is the following data:
	\begin{compactitem}
		\item $\tilde{G}$ is a $G^{\mathrm{op}} \times G$-homogeneous variety, $\tilde{G}(\Bbbk) \neq \emptyset$, with action written as $\gamma \cdot (a, b) = a \gamma b$;
		\item $\tilde{G}$ is simultaneously a $G^{\mathrm{op}}$-torsor and a $G$-torsor, and there exists $\Ad: \tilde{G} \to \Aut(G)$ such that
		\[ \gamma g = \Ad(\gamma)(g) \gamma, \quad \gamma \in \tilde{G}, \; g \in G. \]
	\end{compactitem}
	It follows that $\Ad(a\gamma b) = \Ad(a) \Ad(\gamma) \Ad(b)$. Steinberg's theorem \cite[Théorème I.3.7.1]{HL17} implies that $\Ad(\gamma)$ stabilizes a Borel subgroup over $\overline{\Bbbk}$ for every $\gamma \in \tilde{G}(\overline{\Bbbk})$, therefore $\tilde{G}$ is a spherical $G^{\mathrm{op}} \times G$-variety by Bruhat decomposition. When there exists $\gamma_0$ with $\Ad(\gamma_0) = \identity$, we are reduced to the well-studied ``group case'' $\tilde{G} \simeq G$.

	Let $\ell$ be the absolute rank of $\tilde{G}$ defined in \cite[p.60]{HL17}. For each $\gamma \in \tilde{G}$, define $D^{\tilde{G}}(\gamma)$ to be the coefficient of $X^\ell$ in $\det\left( X - \Ad(\gamma) + 1 \middle| \mathfrak{g} \right)$ where $X$ is an indeterminate. Then $D^{\tilde{G}}$ is a regular function on $\tilde{G}$. Define $\tilde{G}_{\mathrm{reg}} := \{ D^{\tilde{G}} \neq 0 \} \subset \tilde{G}$; the elements thereof are called \emph{regular elements}. This generalizes the notion of regular semisimple elements in $G$. A basic fact is that $\tilde{G}_{\mathrm{reg}}$ is open dense in $\tilde{G}$.
	
	Choose $\gamma_0 \in \tilde{G}(\Bbbk)$ and put $\tau := \Ad(\gamma_0)^{-1} \in \Aut(G)$. Then
	\[ H := \Stab_{G^{\mathrm{op}} \times G}(\gamma_0) = \left\{ (a, b): \gamma_0 \tau(a) b = \gamma_0  \right\} = \left\{ (g, \tau(g)^{-1}) : g \in G \right\}. \]
	Thus $\mathfrak{h} = \Image(\identity, -\tau) \subset \mathfrak{g}^{\mathrm{op}} \times \mathfrak{g}$ and $\mathfrak{h}^\perp = \Image(\identity, \tau) \in \mathfrak{g}^{\mathrm{op}, *} \times \mathfrak{g}^*$, where we write $\tau$ for the induced automorphisms on $\mathfrak{g}$ and $\mathfrak{g}^*$. Summing up:
	\[\begin{tikzcd}[row sep=tiny]
		\mathfrak{g}^* \times G \arrow[r, "\sim"] & \mathfrak{h}^\perp \utimes{H} \left( G^{\mathrm{op}} \times G \right) \arrow[r, "\sim"] & T^* \tilde{G} \arrow[r] & \tilde{G} \\
		(\lambda, g) \arrow[mapsto, r] & {[(\lambda, \tau\lambda), (1, g)]} \arrow[mapsto, rr] & & \gamma_0 g
	\end{tikzcd}\]
	Identify $T^* \tilde{G}$ with $\mathfrak{g}^* \times G$. Then
	\[ \bm{\mu}(\lambda, g) = (\lambda , g^{-1} (\tau\lambda) g) = \left( \lambda, \Ad(g^{-1}) \tau \lambda\right). \]
	
	Next, take the spherical subgroup $K := \left\{(g^{-1}, g): g \in G \right\}$ of $G^{\mathrm{op}} \times G$. We have $\mathfrak{k}^\perp = \{ (\mu, \mu) : \mu \in \mathfrak{g}^* \}$. By the definition of $\tau$,
	\[ \bm{\mu}(\lambda, g) \in \mathfrak{k}^\perp \iff \Ad(g)\lambda = \Ad(\gamma_0)^{-1}(\lambda) \iff \Ad(\gamma_0 g) \lambda = \lambda. \]
	
	Fix an invariant bilinear form $\mathfrak{g} \times \mathfrak{g} \to \Bbbk$ to identify $\mathfrak{g} \simeq \mathfrak{g}^*$. Assume $\gamma := \gamma_0 g$ is regular, then:
	\begin{compactitem}
		\item $\bm{\mu}(\lambda, g) \in \mathfrak{k}^\perp$ is equivalent to $\lambda \in \mathfrak{g}^{\Ad(\gamma)}$, whilst $G^{\Ad(\gamma), \circ}$ is a torus by \cite[Lemme I.3.11.2]{HL17};
		\item $\bm{\mu}(\lambda, g) \in \mathcal{N}$ is equivalent to $\lambda$ being nilpotent.
	\end{compactitem}
	The conjunction of the two properties above is thus $\lambda = 0$. This shows that $\bm{\mu}^{-1}\left( \mathfrak{k}^\perp \cap \mathcal{N} \right)$ reduces to zero section over $\tilde{G}_{\mathrm{reg}}$. Hence we may choose $U := \tilde{G}_{\mathrm{reg}}$.
\end{example}

\section{Review of horocycle correspondence}\label{sec:horocycle}
The definitions below follow \cite[\S 8]{Gin89}. Consider an algebraically closed field $\Bbbk$ of characteristic zero and a connected reductive $\Bbbk$-group $G$. Fix a Borel subgroup $B \subset G$ with $U := R_u(B)$, and let $T := B/U$. Define
\begin{align*}
	Y & := U \backslash G, \quad \text{right $G$-action, left $T$-action}; \\
	Y^{\mathrm{op}} & := G/U, \quad \text{left $G$-action, right $T$-action}; \\
	\mathcal{B} & := T \backslash Y = B \backslash G, \quad \text{right $G$-action}; \\
	\mathcal{Y} & := Y^{\mathrm{op}} \utimes{T} Y, \quad \text{right $G^{\mathrm{op}} \times G$-action}.
\end{align*}
Here, the \emph{horocycle space} $\mathcal{Y}$ is formed by taking the quotient of the right $T$-action on $Y^{\mathrm{op}} \times Y$ via $(y, y') t = (yt, t^{-1} y')$. Observe that $\mathcal{Y}$ carries the free $T$-action $[y,y'] \cdot t = [yt, y'] = [y, ty']$ with quotient $\simeq \mathcal{B} \times \mathcal{B}$.

Consider the morphisms
\[\begin{tikzcd}[row sep=tiny]
	G & \mathcal{B} \times G \arrow[l, "p"'] \arrow[r, "q"] & \mathcal{Y} \\
	g & (Ty, g) \arrow[mapsto, l] \arrow[mapsto, r] & {[y^{-1}, yg]} .
\end{tikzcd}\]

Let $G^{\mathrm{op}} \times G$ act on the right of $\mathcal{B} \times G$ (resp.\ of $G$) by $(\beta, g) \cdot (a, b) = (\beta a^{-1}, agb)$ for all $a, b, g \in G$ and $\beta \in \mathcal{B}$ (resp.\ by bilateral translation).

\begin{lemma}\label{prop:horocycle-fiber}
	The morphisms $p, q$ are both $G^{\mathrm{op}} \times G$-equivariant. Moreover, $q$ is smooth affine and surjective, and for all $g_1, g_2 \in G$ we have 
	\[ q^{-1}\left( [g_1 U, U g_2] \right) = \{B g_1^{-1}\} \times g_1 U g_2 . \]
	In particular, $q^{-1}([g_1 U, g_2])$ is naturally a left $g_1 U g_1^{-1}$-torsor.
\end{lemma}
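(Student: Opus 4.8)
The plan is to establish, in order, the $G^{\mathrm{op}}\times G$-equivariance of $p$ and $q$, the fibre formula for $q$ (which immediately yields surjectivity and the torsor statement), and finally the smoothness and affineness of $q$.

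\textbf{Equivariance.} Both maps are given by explicit formulas, so this is a direct verification; the only thing to watch is the bookkeeping of the inversion isomorphism $Y = U\backslash G \rightiso G/U = Y^{\mathrm{op}}$, $Ug\mapsto g^{-1}U$, and of the four $T$-actions in play. For $p$: $p((Ty,g)\cdot(a,b)) = p(Ty\cdot a^{-1}, agb) = agb = p(Ty,g)\cdot(a,b)$. For $q$: represent $Ty\cdot a^{-1}$ by $ya^{-1}\in Y$; then $(ya^{-1})^{-1} = a\cdot y^{-1}$ in $Y^{\mathrm{op}}$ and $(ya^{-1})\cdot(agb) = (yg)\cdot b$ in $Y$, so $q((Ty,g)\cdot(a,b)) = [a\cdot y^{-1}, (yg)\cdot b] = q(Ty,g)\cdot(a,b)$, where the right $G^{\mathrm{op}}\times G$-action on $\mathcal{Y} = Y^{\mathrm{op}}\utimes{T}Y$ is the natural one (well-defined on the $T$-quotient because the left $G$- and right $T$-actions on $Y^{\mathrm{op}}$ commute, and likewise the right $G$- and left $T$-actions on $Y$).

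\textbf{Fibre, surjectivity, torsor.} Fix $g_1,g_2\in G$, so $g_1 U\in Y^{\mathrm{op}}$, $Ug_2\in Y$. A pair $(Ty,g)$ lies in $q^{-1}([g_1U,Ug_2])$ iff $[y^{-1},yg] = [g_1U,Ug_2]$, i.e.\ $y^{-1} = g_1 s U$ and $yg = Us^{-1}g_2$ for some $s$ in (a lift of) $T$. The first equation gives $y = Us^{-1}g_1^{-1}$, hence $Ty = Bs^{-1}g_1^{-1} = Bg_1^{-1}$; feeding this into the second gives $Us^{-1}g_1^{-1}g = Us^{-1}g_2$, i.e.\ $g_1^{-1}g\in sUs^{-1}g_2 = Ug_2$ (as $s$ normalises $U$), i.e.\ $g\in g_1 Ug_2$. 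Conversely, if $Ty = Bg_1^{-1}$ and $g\in g_1 Ug_2$, picking the representative $y = Usg_1^{-1}$ and absorbing unipotent factors shows $[y^{-1},yg] = [g_1U,Ug_2]$. Thus $q^{-1}([g_1U,Ug_2]) = \{Bg_1^{-1}\}\times g_1 Ug_2$; this is non-empty and, as every point of $\mathcal{Y}$ has the form $[g_1U,Ug_2]$, $q$ is surjective. Since $g_1 Ug_2 = (g_1 Ug_1^{-1})\cdot g_1 g_2$ is a left coset of $g_1 Ug_1^{-1}$ --- a subgroup depending only on $g_1 U\in Y^{\mathrm{op}}$ --- the fibre, identified with $g_1 Ug_2$ via the first projection, is a left $g_1 Ug_1^{-1}$-torsor under left translation.

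\textbf{Smoothness and affineness.} Here I would note that $q$ is a homogeneous fibration. The group $\Gamma := G^{\mathrm{op}}\times G$ acts transitively on both $\mathcal{B}\times G$ and $\mathcal{Y}$ (clear from the explicit actions), $q$ is $\Gamma$-equivariant, and $q(B,1) = [U,U]$; so with $\Delta := \Stab_\Gamma(B,1) = \{(a,a^{-1}):a\in B\}$ and $\Delta' := \Stab_\Gamma([U,U])$ one has $\Delta\subseteq\Delta'$ (because $q$ intertwines the actions and $q(B,1)=[U,U]$), and $q$ is identified with the projection $\Gamma/\Delta\to\Gamma/\Delta'$, whose fibre over the base coset is the homogeneous space $\Delta'/\Delta$. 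By the fibre formula just proved (with $g_1=g_2=1$), this fibre is $\{B\}\times U\cong U$, an affine space. Base-changing $q$ along the faithfully flat quasi-compact map $\Gamma\to\Gamma/\Delta' = \mathcal{Y}$ (a torsor under the smooth group $\Delta'$) turns it into the trivial bundle $\Gamma\times U\to\Gamma$, which is smooth and affine; as both properties are fppf-local on the target, so is $q$. Alternatively, smoothness follows from generic smoothness together with $\Gamma$-equivariance, and for the whole statement one may simply cite \cite[\S 8]{Gin89}, where this correspondence is set up.

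\textbf{Expected obstacle.} None of this is deep; the only point demanding genuine care is keeping the inversion $Y\cong Y^{\mathrm{op}}$ and the various left/right $T$-actions consistent throughout the equivariance and fibre computations, together with checking that the auxiliary element $s\in T$ introduced when unwinding $[\,\cdot\,,\,\cdot\,]$ really drops out of the answers.
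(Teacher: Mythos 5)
Your proof is correct, and the argument for smoothness and affineness takes a genuinely different route from the paper's. The equivariance and the fibre computation you do by direct unwinding, which is essentially the same as the paper (the paper reads the fibre off of $\alpha^{-1}(Ug_1^{-1}, Ug_2)$ rather than $q^{-1}$ itself, a cosmetic difference). For smoothness and affineness, the paper builds two nested Cartesian squares: first it descends along the vertical $T$-torsors $Y\times G\to\mathcal{B}\times G$ and $Y\times Y\to\mathcal{Y}$ to reduce $q$ to $\alpha\colon Y\times G\to Y\times Y$, $(y,g)\mapsto(y,yg)$, and then it descends affineness of $\alpha$ along the $U\times U$-torsor $G\times G\to Y\times Y$ from an explicit closed subscheme of $G^3$. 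You instead promote the $\Gamma=G^{\mathrm{op}}\times G$-equivariance (which the paper states but never exploits for this purpose) into the main tool: identifying $q$ with the homogeneous fibration $\Gamma/\Delta\to\Gamma/\Delta'$, you base-change once along the $\Delta'$-torsor $\Gamma\to\mathcal{Y}$ and read off everything from the trivial bundle $\Gamma\times(\Delta'/\Delta)\to\Gamma$, with $\Delta'/\Delta\cong U$ supplied by your fibre formula at $g_1=g_2=1$. Both are fppf-descent arguments, but yours is a single conceptual reduction exploiting transitivity, whereas the paper's is two hands-on Cartesian diagrams; yours also has the small logical wrinkle that the fibre formula feeds into the smoothness proof rather than being a byproduct of it, which is fine as long as the ordering is kept as you have it.
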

\begin{proof}
	The following diagram is Cartesian
	\[\begin{tikzcd}
		Y \times G \arrow[r, "\beta"] \arrow[d, "\alpha"'] & \mathcal{B} \times G \arrow[d, "q"] \\
		Y \times Y \arrow[r, "\gamma"'] & \mathcal{Y}
	\end{tikzcd}\]
	where $\alpha(y, g) = (y, y g)$, $\beta(y, g) = (T y, g)$ and $\gamma(y_1, y_2) = [y_1^{-1}, y_2]$. Therefore, by descent along $T$-torsors, it suffices to show $\alpha$ is smooth affine surjective. Smoothness and surjectivity are straightforward. To show $\alpha$ is affine, we use another Cartesian diagram:
	\[\begin{tikzcd}
		\left\{ (g, h_1, h_2) \in G^3: h_1 g h_2^{-1} \in U \right\} \arrow[r] \arrow[d, "{\alpha'}"'] & Y \times G \arrow[d, "\alpha"] \\
		G \times G \arrow[r] & Y \times Y
	\end{tikzcd} \quad \begin{tikzcd}
		(g, h_1, h_2) \arrow[mapsto, r] \arrow[mapsto, d] & (U h_1, g) \\
		(h_1, h_2) \arrow[mapsto, r] & (Uh_1, Uh_2)
	\end{tikzcd}\]
	The upper-left corner is closed in $G^3$, hence $\alpha'$ is affine. This property descends to $\alpha$ along $G \times G \twoheadrightarrow Y \times Y$.
	
	The description of $q^{-1}([g_1 U, U g_2])$ follows from $\alpha^{-1}(U g_1^{-1}, U g_2) = \{U g_1^{-1} \} \times g_1 U g_2$.
\end{proof}

The equivariance of $p, q$ justifies the following

\begin{definition}\label{def:HC-CH}
	Let $L \subset G^{\mathrm{op}} \times G$ be any subgroup and $\chi: \mathfrak{l} \to \Bbbk$ be a character. In the setting above, we set
	\[ \mathrm{HC}_{L, \chi} := q_* p^!, \quad \mathrm{CH}_{L, \chi} := p_! q^* \]
	in the equivariant derived categories of $(L, \chi)$-monodromic $\mathscr{D}$-modules (Definition \ref{def:equivariant-derived}). They give rise to a pair of adjoint functors
	\[\begin{tikzcd}
		\mathrm{CH}_{L, \chi}:  \arrow[r, yshift=0.3em] \cate{D}^{\mathrm{b}}_{L, \chi, \mathrm{h}}(\mathcal{Y}) & \cate{D}^{\mathrm{b}}_{L, \chi, \mathrm{h}}(G): \mathrm{HC}_{L, \chi} \arrow[l, yshift=-0.3em]
	\end{tikzcd}. \]
	Here $\cate{D}^{\mathrm{b}}_{L, \chi, \mathrm{h}}$ denotes the full triangulated subcategory of $\cate{D}^{\mathrm{b}}_{L, \chi}$ formed by complexes with holonomic cohomologies.
	
	When $L = \{1\}$, we denote them simply as $\mathrm{CH}$, $\mathrm{HC}$.
\end{definition}

For $L = \{1\}$ we obtain the non-equivariant version $\cate{D}^{\mathrm{b}}_{\mathrm{h}}(\mathcal{Y}) \leftrightharpoons \cate{D}^{\mathrm{b}}_{\mathrm{h}}(G)$, whilst for $\chi$ trivial we obtain the pair $\cate{D}^{\mathrm{b}}_{L, \mathrm{h}}(\mathcal{Y}) \leftrightharpoons \cate{D}^{\mathrm{b}}_{L, \mathrm{h}}(G)$ for complexes of equivariant $\mathscr{D}$-modules. Finally, these functors are also compatible with forgetful functors $\cate{D}^{\mathrm{b}}_{L, \chi, \mathrm{h}}(\cdots) \to \cate{D}^{\mathrm{b}}_{L', \chi', \mathrm{h}}(\cdots)$ when $L' \subset L$ is a subgroup and $\chi' = \chi|_{\mathfrak{l}'}$. Cf.\ the discussions after Definition \ref{def:equivariant-derived}.

\begin{theorem}[See {\cite[Theorem 8.5.1]{Gin89}}]\label{prop:summand}
	The identity functor of $\cate{D}^{\mathrm{b}}_{\mathrm{h}}(G)$ is a direct summand of $\mathrm{CH} \circ \mathrm{HC}$; this splits the adjunction co-unit $\mathrm{CH} \circ \mathrm{HC} \to \identity$.
\end{theorem}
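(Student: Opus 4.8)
The plan is to realise $\mathrm{CH} \circ \mathrm{HC} = p_! q^* q_* p^!$ as an integral transform on $G \times G$, to decompose the resulting kernel by means of the decomposition theorem, and to match the diagonal summand with the co-unit. First form the fibre product $E := (\mathcal{B} \times G) \dtimes{\mathcal{Y}} (\mathcal{B} \times G)$, with its two projections $\pi_1, \pi_2 \colon E \to \mathcal{B} \times G$. Since $q$ is smooth and $p$ is the (proper) projection $\mathcal{B} \times G \to G$, base change for $\mathscr{D}$-modules gives $q^* q_* \simeq (\pi_1)_* \pi_2^*$ and $p_! = p_*$; writing $r := (p \pi_1, p \pi_2) \colon E \to G \times G$, it follows that $\mathrm{CH} \circ \mathrm{HC}$ is, up to a cohomological shift dictated by the normalisations of Definition \ref{def:HC-CH}, the integral transform with kernel $\mathcal{K} := \int_r \mathscr{O}_E$ on $G \times G$.

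The second step is to identify $E$ and $r$ geometrically. By Lemma \ref{prop:horocycle-fiber}, $q^{-1}([g_1 U, U g_2]) = \{ B g_1^{-1} \} \times g_1 U g_2$, so a $\Bbbk$-point of $E$ is a triple $(\beta, h_1, h_2) \in \mathcal{B} \times G \times G$ with $h_2 h_1^{-1} \in R_u(B_\beta)$, where $B_\beta$ denotes the Borel attached to $\beta$. Substituting $u := h_2 h_1^{-1}$ identifies $E$ with $\widetilde{\mathcal{U}} \times G$, where $\mu \colon \widetilde{\mathcal{U}} := \{ (\beta, u) \in \mathcal{B} \times G : u \in R_u(B_\beta) \} \to \mathcal{U}$, $(\beta, u) \mapsto u$, is the Springer resolution of the unipotent variety $\mathcal{U} \subset G$; here $\widetilde{\mathcal{U}}$ is smooth of dimension $2 \dim \mathcal{B}$ and $\mu$ is proper, birational and semismall. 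Under this identification $r$ factors as $\widetilde{\mathcal{U}} \times G \xrightarrow{\mu \times \identity} \mathcal{U} \times G \xrightarrow{m} G \times G$, with $m(u, h) = (h, u h)$ a closed immersion; hence $\mathcal{K} \simeq m_* \bigl( (\int_\mu \mathscr{O}_{\widetilde{\mathcal{U}}}) \boxtimes \mathscr{O}_G \bigr)$ up to shift.

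Now the decomposition theorem applies to the proper semismall map $\mu$: the Springer $\mathscr{D}$-module $\int_\mu \mathscr{O}_{\widetilde{\mathcal{U}}}$ is semisimple, and the skyscraper $\delta_1$ at the identity of $\mathcal{U}$ — the unique point whose Springer fibre is all of $\mathcal{B}$ — occurs as a direct summand with multiplicity one, namely as the contribution of the line $H^{2 \dim \mathcal{B}}(\mathcal{B})$ (on which the Weyl group acts through the sign character). The restriction of $m$ to $\{ 1 \} \times G$ is the diagonal embedding $\Delta \colon G \hookrightarrow G \times G$, so the corresponding summand of $\mathcal{K}$ is $\Delta_* \mathscr{O}_G$, the kernel of the identity functor. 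Thus $\mathrm{CH} \circ \mathrm{HC} \simeq \identity \oplus F$ for some complementary kernel functor $F$, the shift $[2 \dim \mathcal{B}]$ coming from $p^!$ and the equality $\dim \widetilde{\mathcal{U}} = 2 \dim \mathcal{B}$ conspiring to place the $\identity$-summand in cohomological degree zero.

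It remains to verify that this splitting is the one induced by the co-unit $\varepsilon \colon \mathrm{CH} \circ \mathrm{HC} \to \identity$; this is the step I expect to be the main obstacle, as it requires translating the abstract co-unit of the composite adjunction $\mathrm{CH} \dashv \mathrm{HC}$ into geometry, along with careful bookkeeping of shifts. Writing $\varepsilon = \varepsilon_p \circ (p_! \varepsilon_q p^!)$ with $\varepsilon_q \colon q^* q_* \to \identity$ and $\varepsilon_p \colon p_! p^! \to \identity$, one unwinds (via base change) $\varepsilon_q$ as restriction of $\mathscr{O}_E$ along the diagonal section $\mathcal{B} \times G \hookrightarrow E$ — which in the coordinates above is the locus $\{ u = 1 \}$, i.e.\ $\mu^{-1}(1) \times G = \mathcal{B} \times G$ — and $\varepsilon_p$ as the fibrewise trace, i.e.\ the projection $H^*(\mathcal{B}) \twoheadrightarrow H^{2 \dim \mathcal{B}}(\mathcal{B})$. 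Both the inclusion $\Delta_* \mathscr{O}_G \hookrightarrow \mathcal{K}$ and the map $\varepsilon$ are thereby governed by the single line $H^{2 \dim \mathcal{B}}(\mathcal{B})$ — the top cohomology of the Springer fibre over $1$ — so $\varepsilon$ restricts to an isomorphism on that summand, which gives the asserted splitting. Finally, since $p$ and $q$ are $G^{\mathrm{op}} \times G$-equivariant by Lemma \ref{prop:horocycle-fiber}, every step carries over verbatim to the equivariant, and via Proposition \ref{prop:finite-to-monodromic} also the monodromic, categories of Definition \ref{def:equivariant-derived}.
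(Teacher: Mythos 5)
Your proposal is correct and takes essentially the same route as the paper, which (citing \cite[Theorem 8.5.1]{Gin89} and \cite[8.9.17]{CG10}) identifies $\mathrm{CH}\circ\mathrm{HC}$ with convolution by the Springer $\mathscr{D}$-module on the unipotent variety and then extracts the skyscraper at $1$ as the sign-representation summand; you merely make explicit the fibre-product/base-change computation of the kernel, the identification with $\widetilde{\mathcal{U}}\times G$, and the decomposition-theorem step. One small bookkeeping remark: in the $\mathscr{D}$-module convention used in the paper one has $p^!\mathcal{M}\simeq\mathscr{O}_{\mathcal{B}}\boxtimes\mathcal{M}[\dim\mathcal{B}]$ (see the proof of Theorem~\ref{prop:L-regularity}), so the shift contributed by $p^!$ is $[\dim\mathcal{B}]$ rather than the constructible-sheaf figure $[2\dim\mathcal{B}]$ you quote, though this does not affect the structure of the argument.
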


In fact, it is shown in \textit{loc.\ cit.} that in the setting of constructible sheaves, $\mathrm{CH} \circ \mathrm{HC}$ is given by convolution with the Springer sheaf $\mathrm{Spr} \in \cate{Perv}(G)$, that is, $\Rder r_* \Bbbk_{\tilde{\mathcal{N}}} [\dim \tilde{\mathcal{N}}]$ where $r: \tilde{\mathcal{N}} \twoheadrightarrow \mathcal{N} \subset G$ is the Springer resolution. Moreover $\mathrm{Spr}$ is known to contain the skyscraper sheaf centered at $1$ as a direct summand, see \cite[8.9.17]{CG10}. These are transcribed to the $\mathscr{D}$-module setting in \cite[\S 8.7]{Gin89}.

\section{\texorpdfstring{$K$}{K}-admissible \texorpdfstring{$D$}{D}-modules: regularity}\label{sec:proof-regularity}
Retain the conventions from \S\ref{sec:horocycle}; in particular $\Bbbk$ is algebraically closed. Denote the $T$-torsor $\mathcal{Y} \to \mathcal{B} \times \mathcal{B}$ as $\pi$.

For the notion of regular holonomic $\mathscr{D}$-modules or complexes, we refer to standard references such as \cite[VII]{Bo87} or \cite[Chapter 6]{HTT08}.

\begin{lemma}\label{prop:regularity-prep}
	Let $L \subset G^{\mathrm{op}} \times G$ be a spherical subgroup, and $\chi$ be a reductive character of $\mathfrak{l}$. Suppose that $\mathcal{N}$ is a simple $(L, \chi)$-monodromic $\mathscr{D}_{\mathcal{Y}}$-module, $\mathcal{N}$ is holonomic, and that there exists a covering $\mathcal{B} \times \mathcal{B} = W_1 \cup \cdots \cup W_r$ by affine open subsets such that the sections of $\mathcal{N}|_{\pi^{-1} W_i}$ are all $\mathfrak{t}$-finite, for $i = 1, \ldots, r$. Then $\mathcal{N}$ is a regular holonomic $\mathscr{D}_{\mathcal{Y}}$-module.
\end{lemma}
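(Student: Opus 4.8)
The plan is to place $\mathcal{N}$ in the Beilinson--Bernstein framework of \cite[\S 2.5]{FG10} by combining its $(L,\chi)$-monodromic structure with the torus $T$ acting along the fibres of $\pi$, and then to invoke the known regularity of monodromic holonomic twisted $\mathscr{D}$-modules on flag varieties.

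First I would pin down a $T$-monodromy. The hypothesis on the $W_i$ says precisely that, viewing $\pi_* \mathcal{N}$ as a $\pi_* \mathscr{D}_{\mathcal{Y}}$-module, over each affine $\pi^{-1} W_i$ the sections are finite under the copy of $\Sym(\mathfrak{t})$ in $D_{\mathcal{Y}}$ coming from the $T$-action $[y,y'] \cdot t = [yt, y']$, and the $\pi^{-1} W_i$ cover $\mathcal{Y}$; hence $\mathcal{N}$ decomposes according to this $\Sym(\mathfrak{t})$-action into pieces supported over the various $\overline{\eta} \in \mathfrak{t}^*/\mathfrak{t}^*_{\Z}$ as in \eqref{eqn:categories}, and simplicity forces $\mathcal{N}$ to sit over a single $\overline{\xi}$, i.e.\ $\mathcal{N} \in \mathscr{D}_{\mathcal{Y}} \dcate{Mod}_{\widetilde{\overline{\xi}}}$. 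Transporting to $\tilde{\mathscr{D}}$, the corresponding simple $\mathcal{M} = \mathcal{M}_{\widetilde{\xi}}$ has nonzero $\mathfrak{m}_\xi$-torsion submodule (its local sections of minimal $\mathfrak{m}_\xi$-order are annihilated by $\mathfrak{m}_\xi$), so $\mathcal{M} = \mathcal{M}_\xi$ by simplicity; thus $\mathcal{N} \in \mathscr{D}_{\mathcal{Y}}\dcate{Mod}_{\overline{\xi}}$, and by Proposition \ref{prop:finite-to-monodromic} it carries a canonical $(T,\xi)$-monodromic structure.

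Next I would assemble the relevant group. The $T$-action above commutes with the $G^{\mathrm{op}} \times G$-action on $\mathcal{Y}$, so $\pi$ is $G^{\mathrm{op}} \times G$-equivariant, $L$ acts on $\mathcal{B} \times \mathcal{B}$, and the canonical $(T,\xi)$-structure is compatible with the $(L,\chi)$-structure already on $\mathcal{N}$; hence $\mathcal{N}$ is $(L \times T,\, \chi \boxtimes \xi)$-monodromic, with $\chi \boxtimes \xi$ reductive ($\mathfrak{t}$ being abelian). Equivalently, under $\mathscr{D}_{\mathcal{Y}}\dcate{Mod}_{\overline{\xi}} \simeq \tilde{\mathscr{D}}\dcate{Mod}_\xi \simeq \mathscr{D}_\xi\dcate{Mod}$, the module $\mathcal{N}$ becomes a simple holonomic $(L,\chi)$-monodromic $\mathscr{D}_\xi$-module $\mathcal{M}$ on $\mathcal{B} \times \mathcal{B}$, which is the flag variety of $G^{\mathrm{op}} \times G$; since $L$ is spherical in $G^{\mathrm{op}} \times G$, it acts on it with finitely many orbits.

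Finally I would invoke regularity: a holonomic monodromic twisted $\mathscr{D}$-module on a flag variety that is equivariant, up to a reductive monodromy, for a subgroup with finitely many orbits is regular holonomic, which is what \cite[\S 2.5]{FG10} provides à la Beilinson--Bernstein. Concretely, $\mathcal{M}$ is constructible along the finite stratification of $\mathcal{B}\times\mathcal{B}$ by $L$-orbits, so a dévissage presents it as an iterated extension of intermediate extensions $j_{O!\ast}\mathcal{L}_O$ of $(L,\chi)$-monodromic integrable connections $\mathcal{L}_O$ on $L$-orbits $O$; the reductivity of $\chi$ forces each $\mathcal{L}_O$ to be trivialized along the unipotent directions of $O$ and to have tame monodromy along the reductive ones, hence regular, whereas without it one meets irregular exponential-type connections (cf.\ Remark \ref{rem:reductive-character}); since $j_{O!\ast}$ and extensions preserve regular holonomicity, $\mathcal{M}$ is regular holonomic, and then so is $\mathcal{N}$, recovered from $\mathcal{M}$ by pullback along the smooth (affine) $T$-torsor $\pi$ via \eqref{eqn:N-M}. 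The genuinely hard input — regularity of monodromic holonomic twisted $\mathscr{D}$-modules on flag varieties, ultimately the regularity of the orbit connections $\mathcal{L}_O$ and the role of reductivity — is imported from \cite[\S 2.5]{FG10}; within the present argument the only delicate points are the coherence of the two monodromic structures and the finiteness of $(L\times T)$-orbits, both immediate once one notes that the two actions commute and that $\mathcal{B}\times\mathcal{B}$ is the flag variety of $G^{\mathrm{op}} \times G$.
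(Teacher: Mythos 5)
Your proposal tracks the paper's proof of Lemma \ref{prop:regularity-prep} step by step: decompose $\mathcal{N}$ over $\mathfrak{t}^*/\mathfrak{t}^*_{\Z}$ via the $\mathfrak{t}$-finiteness hypothesis, use simplicity and commutativity of $L$ with $T$ to land in a single $\mathscr{D}_{\mathcal{Y}}\dcate{Mod}_{\overline{\xi}}$, invoke Proposition~\ref{prop:finite-to-monodromic} to obtain the canonical $(T,\xi)$-monodromic structure, combine it with the given $(L,\chi)$-structure to get $(L\times T,(\chi,\xi))$-monodromy with a reductive character, and then invoke the Frenkel--Gaitsgory regularity result via the finite-orbit condition. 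Your justification for the step from $\mathscr{D}_{\mathcal{Y}}\dcate{Mod}_{\widetilde{\overline{\xi}}}$ down to $\mathscr{D}_{\mathcal{Y}}\dcate{Mod}_{\overline{\xi}}$ --- that a nonzero simple object of the former has a nonzero $\tilde{\mathscr{D}}$-submodule annihilated by $\mathfrak{m}_\xi$, hence equals it --- fills in a point the paper asserts without comment, and is correct.

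The one place you deviate is in how you unpack the citation at the end. You sketch a d\'evissage into intermediate extensions $j_{O!\ast}\mathcal{L}_O$ of orbit connections and claim reductivity of $\chi$ forces each $\mathcal{L}_O$ to be ``trivialized along the unipotent directions of $O$ and tame along the reductive ones.'' The paper's Remark~\ref{rem:reductive-character} argues differently: it takes a smooth equivariant compactification $\mathcal{Y}\hookrightarrow\overline{\mathcal{Y}}$, reduces regularity of $j_*\mathcal{N}$ to regularity of $\mathscr{O}_{Q,\psi}$ on a compactification of $Q:=L\times T$, and handles the non-reductive $Q$ by a Levi decomposition $Q=Q'\ltimes R_u(Q)$ together with the hypothesis $\psi = \psi'\rtimes 0$. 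Your phrasing is loose --- since $\mathcal{M}$ is simple the d\'evissage produces a single intermediate extension rather than an iterated one, and an $L$-orbit $O$ is homogeneous so it has no intrinsic ``unipotent directions''; what reductivity of $\chi$ actually controls is the asymptotics of the one-dimensional connection $\mathscr{O}_{Q,\psi}$ at the boundary of a compactification of $Q$, which is precisely what the paper's Remark isolates --- but you explicitly declare that you are importing the hard input from [FG10, \S 2.5], and the two glosses describe the same underlying phenomenon. Net: your argument is the paper's, with one useful extra detail supplied and one slightly informal paraphrase of the cited black box.
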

\begin{proof}
	Apply the discussions around \eqref{eqn:categories} to the $T$-torsor $\pi$. In view of the $\mathfrak{t}$-finiteness assumption, one can decompose $\mathcal{N}$ into components indexed by $\overline{\xi} \in \mathfrak{t}^* / \mathfrak{t}^*_{\Z}$ according to \eqref{eqn:categories}. Since the actions of $L$ and $T$ commute, each component is still $(L, \chi)$-monodromic, and the simplicity implies that $\mathcal{N}$ is a simple object in $\mathscr{D}_{\tilde{X}} \dcate{Mod}_{\widetilde{\overline{\xi}}}$ for some $\overline{\xi}$; in fact, it must belong to $\mathscr{D}_{\tilde{X}} \dcate{Mod}_{\overline{\xi}}$.

	Pick a representative $\xi \in \mathfrak{t}^*$ of $\overline{\xi}$. By Proposition \ref{prop:finite-to-monodromic}, $\mathcal{N}$ acquires a canonical $(T, \xi)$-monodromic structure. Since $L$ and $T$ commute, $\mathcal{N}$ is actually $(L \times T, (\chi, \xi))$-monodromic. Noting that $L \times T$ acts on $\mathcal{Y}$ with finitely many orbits, one applies \cite[Lemma 2.5.1]{FG10} to conclude the regularity of $\mathcal{N}$. This is legitimate by the Remark below.
\end{proof}

\begin{remark}\label{rem:reductive-character}
	The result \cite[Lemma 2.5.1]{FG10} cited above asserts that if $Q$ is a connected reductive group, $\psi: \mathfrak{q} \to \Bbbk$ is a character, and $X$ is a smooth $Q$-variety with finitely many $Q$-orbits, then every $(Q, \psi)$-monodromic $\mathscr{D}_Q$-module $\mathcal{N}$ is regular holonomic. The case when $\psi$ is trivial and $Q$ is an arbitrary affine group is well-known; see \cite[Theorem 11.6.1]{HTT08}. What we need is the case $Q := L \times T$ acting on $X := \mathcal{Y}$ and $\psi := (\chi, \xi)$. This can be extracted from the proof of \cite[Lemma 2.5.1]{FG10} as follows. Choose a smooth $G$-equivariant compactification $j: \mathcal{Y} \hookrightarrow \overline{\mathcal{Y}}$. The goal is to show that $j_* \mathcal{N}$ is regular holonomic at every point. In \textit{loc.\ cit.}, this is reduced to the assertion that $\mathscr{O}_{Q, \psi}$ is regular holonomic as a $\mathscr{D}_{\overline{Q}}$-module, which is then established for connected reductive $Q$.

	To treat our case, first replace $L$ by $L^\circ$ to ensure $Q$ is connected. Take a Levi decomposition $Q = Q' \ltimes R_u(Q)$. Since $\psi$ is a reductive character, it decomposes into $\psi' \rtimes 0$. The regularity reduces to (a) the standard case of $\mathscr{O}_{R_u(Q)}$, and (b) the case of $\mathscr{O}_{Q', \psi'}$ which is addressed in \textit{loc.\ cit.}
	
	Note that reductivity is necessary in these arguments. If we take $Q = \Ga$ and $\psi$ nontrivial, then $\mathscr{O}_{Q, \psi} \simeq \mathscr{D}_Q / \mathscr{D}_Q \left( \frac{\dd}{\dd t} - a \right)$ for some $a \neq 0$. It is holonomic, yet irregular at $\infty$.
\end{remark}

\begin{definition}\label{def:K-admissible}
	Let $Z$ be a homogeneous $G$-variety. Let $K$ be a subgroup of $G$. A $D_Z$-module $M$ is called $K$-\emph{admissible} if
	\begin{itemize}
		\item $M$ is finitely generated over $D_Z$;
		\item $M$ is locally finite under $\mathcal{Z}(\mathfrak{g})$;
		\item the $\mathscr{D}_Z$-module $\mathcal{M}$ generated by $M$ is equipped with a $(K, \chi)$-monodromic structure for some reductive character $\chi$ of $\mathfrak{k}$.
	\end{itemize}
\end{definition}

Quotients and finitely generated submodules of a $K$-admissible $D_Z$-module are still admissible, provided that they are $K$-stable. By Lemma \ref{prop:monodromic-finiteness}, every $K$-admissible $D_Z$-module is $\mathfrak{k}$-admissible in the sense of Definition \ref{def:k-admissible}. For any $D_Z$-module $M$, we write $\mathcal{M} := \mathscr{D}_Z \cdot M$.

\begin{remark}
	One can view $G$ as a $G^{\mathrm{op}} \times G$-variety by $x \cdot (a, b) = axb$. The definition above can therefore be applied to $D_G$-modules under the action of a subgroup $L \subset G^{\mathrm{op}} \times G$. Note that the local finiteness under $\mathcal{Z}(\mathfrak{g} \times \mathfrak{g})$ and $\mathcal{Z}(\mathfrak{g})$ are equivalent.
\end{remark}

\begin{example}\label{eg:K-admissible}
	The examples mentioned in \S\ref{sec:monodromic} are directly related to $K$-admissibility. In what follows, we view $Z$ as a smooth variety over $\CC$ which is definable over $\R$.
	\begin{enumerate}[(i)]
		\item \emph{Function spaces} \quad In Example \ref{eg:function-space}, suppose furthermore that $V$ is finitely generated over $D_Z$ and locally finite under $\mathcal{Z}(\mathfrak{g})$, then $V$ generates a $K$-admissible $D_Z$-module with trivial $\chi$. Indeed, $\mathscr{D}_Z \cdot V$ is equipped with a $K$-equivariant structure.
		\item \emph{Relative invariants with reductive character $\chi$} \quad In Example \ref{eg:relative-invariant}, suppose that $u$ is $\mathcal{Z}(\mathfrak{g})$-finite, then $D_Z \cdot u$ is $K$-admissible; in this case, $\mathscr{D}_Z \cdot u$ is equipped with a $(K, \chi)$-monodromic structure.
		\item \emph{Localizations} \quad In Example \ref{eg:localization}, suppose that the $(\mathfrak{g}, K)$-module $V$ is a Harish-Chandra module; see \cite[\S 4]{BK14}. In this case $V$ is finitely generated over $U(\mathfrak{g})$ and locally finite under $\mathcal{Z}(\mathfrak{g})$. Hence $D_Z \dotimes{U(\mathfrak{g})} V$ is $K$-admissible with trivial $\chi$. The $\mathscr{D}_Z$-module it generates is $\Loc_Z(V)$ which is $K$-equivariant.
	\end{enumerate}
\end{example}

\begin{theorem}\label{prop:L-regularity}
	Let $L \subset G^{\mathrm{op}} \times G$ be a spherical subgroup. Then every $L$-admissible $D_G$-module $M$ is regular holonomic.
\end{theorem}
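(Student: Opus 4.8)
The strategy is Ginzburg's: transport the problem from $G$ to the doubled basic affine space $\mathcal{Y}$ through the horocycle correspondence, establish regularity there via the Beilinson--Bernstein input packaged in Lemma \ref{prop:regularity-prep}, and carry the conclusion back using Theorem \ref{prop:summand}. Regard $G$ as a $G^{\mathrm{op}} \times G$-variety as in the Remark after Definition \ref{def:K-admissible}; it is spherical, since by the Bruhat decomposition a Borel $B^{\mathrm{op}} \times B$ has an open orbit on $G$ (the group case, cf.\ Example \ref{eg:group-case-U}). Because $M$ is $L$-admissible, Lemma \ref{prop:monodromic-finiteness} makes it locally $U(\mathfrak{l})$-finite, hence $\mathfrak{l}$-admissible in the sense of Definition \ref{def:k-admissible}; as $L$ is spherical, Corollary \ref{prop:holonomic} shows the associated $\mathscr{D}_G$-module $\mathcal{M}$ is holonomic. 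Since $G$ is affine, $\mathcal{M}$ is an object of the heart of $\cate{D}^{\mathrm{b}}_{L, \chi, \mathrm{h}}(G)$ with $\Gamma(G, \mathcal{M}) = M$, and the functors of Definition \ref{def:HC-CH} apply to it. As regular holonomicity of a $\mathscr{D}_G$-module makes no reference to the monodromic structure and is stable under subquotients and extensions, it will suffice to prove that $\mathcal{M}$ is regular holonomic.

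Set $\mathcal{K}^\bullet := \mathrm{HC}_{L, \chi}(\mathcal{M}) = q_* p^! \mathcal{M} \in \cate{D}^{\mathrm{b}}_{L, \chi, \mathrm{h}}(\mathcal{Y})$. Each $\mathcal{H}^i(\mathcal{K}^\bullet)$ is a holonomic $(L, \chi)$-monodromic $\mathscr{D}_{\mathcal{Y}}$-module, hence of finite length in that abelian category; since regular holonomicity is closed under extensions it is enough to treat each simple subquotient $\mathcal{N}$ of each $\mathcal{H}^i(\mathcal{K}^\bullet)$. Such an $\mathcal{N}$ is simple, holonomic and $(L, \chi)$-monodromic, and $\chi$ is reductive, so Lemma \ref{prop:regularity-prep} applies once the $\mathfrak{t}$-finiteness hypothesis is verified, namely that for some affine open cover $W_1, \dots, W_r$ of $\mathcal{B} \times \mathcal{B}$ the sections of $\mathcal{N}$ on each $\pi^{-1} W_i$ are $\mathfrak{t}$-finite. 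This is where the Harish-Chandra (equivalently, basic-affine-space) nature of the horocycle transform enters: the image of $\mathcal{Z}(\mathfrak{g})$ under $\mathrm{HC}$ is matched, through the Harish-Chandra isomorphism $\mathcal{Z}(\mathfrak{g}) \rightiso \Sym(\mathfrak{t})^W$ and the finite map $\mathfrak{t}^* \to \mathfrak{t}^*/W$, with the $T$-monodromy of the torsor $\pi \colon \mathcal{Y} \to \mathcal{B} \times \mathcal{B}$. Concretely, $p^!$ is a shifted pull-back along the projection $\mathcal{B} \times G \to G$ that preserves local $\mathcal{Z}(\mathfrak{g}^{\mathrm{op}} \times \mathfrak{g})$-finiteness, and along the smooth affine map $q$, whose fibres are $U$-torsors (Lemma \ref{prop:horocycle-fiber}), the $\mathcal{Z}$-finiteness coming from $M$ becomes $\mathfrak{t}$-finiteness of the $\pi$-monodromy on $\mathcal{Y}$. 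Granting this, Lemma \ref{prop:regularity-prep} yields regular holonomicity of every such $\mathcal{N}$, hence of $\mathcal{K}^\bullet = \mathrm{HC}_{L, \chi}(\mathcal{M})$.

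It remains to return to $G$. Apply $\mathrm{CH}_{L, \chi} = p_! q^*$: the (shifted) pull-back $q^*$ and the push-forward $p_! = p_*$ (here $p \colon \mathcal{B} \times G \to G$ is proper, $\mathcal{B}$ being projective) both preserve regular holonomicity, by the stability of regular holonomic complexes under the standard operations (\cite[Chapter 6]{HTT08}); hence $\mathrm{CH}_{L, \chi} \mathrm{HC}_{L, \chi}(\mathcal{M})$ is regular holonomic. Applying the forgetful functor $\mathbf{oblv}$ and using that $\mathrm{HC}$ and $\mathrm{CH}$ commute with it (Definition \ref{def:HC-CH}), Theorem \ref{prop:summand} exhibits $\mathbf{oblv}(\mathcal{M})$ as a direct summand of $\mathrm{CH} \circ \mathrm{HC}(\mathbf{oblv}(\mathcal{M})) = \mathbf{oblv}(\mathrm{CH}_{L, \chi} \mathrm{HC}_{L, \chi}(\mathcal{M}))$, which is regular holonomic; a direct summand of a regular holonomic complex is regular holonomic, so $\mathcal{M}$, and hence $M = \Gamma(G, \mathcal{M})$, is regular holonomic.

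The main obstacle is the $\mathfrak{t}$-finiteness step: one must track the image of $\mathcal{Z}(\mathfrak{g})$ through $q_* p^!$ and identify it with the $T$-monodromy on $\mathcal{Y}$, i.e.\ unwind the interaction of the horocycle transform with the geometry of the basic affine space and the $U$-torsor fibres of $q$. Reductivity of $\chi$ is essential precisely here, since it is the hypothesis under which Lemma \ref{prop:regularity-prep} (via \cite[Lemma 2.5.1]{FG10}) is available; see Remark \ref{rem:reductive-character}. The remaining ingredients --- the reductions to simple objects, stability of regular holonomicity under the six operations, and the splitting furnished by Theorem \ref{prop:summand} --- are formal.
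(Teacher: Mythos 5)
Your proof is correct and follows essentially the same strategy as the paper: holonomicity via Corollary~\ref{prop:holonomic}, transport to $\mathcal{Y}$ via $\mathrm{HC}_{L,\chi}$, reduction to simple $(L,\chi)$-monodromic subquotients and Lemma~\ref{prop:regularity-prep}, then return via $\mathrm{CH}$ and the splitting from Theorem~\ref{prop:summand}. Like the paper, you flag the $\mathfrak{t}$-local-finiteness of $\mathrm{HC}^i(\mathcal{M})$ as the nontrivial computational step (which the paper delegates to Ginzburg's explicit relative de Rham argument in \cite{Gin93}); your conceptual account of how $\mathcal{Z}(\mathfrak{g})$-finiteness becomes $\mathfrak{t}$-finiteness through the $U$-torsor fibers of $q$ and the unshifted Harish-Chandra map is the same picture.
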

\begin{proof}
	By Corollary \ref{prop:holonomic} applied to $Z := G$ and $L$, we see $\mathcal{M}$ is holonomic. Theorem \ref{prop:summand} implies that $\mathcal{M}$ (as a $\mathscr{D}_G$-module) is a direct summand of $\mathrm{CH} \circ \mathrm{HC}(\mathcal{M})$. Since the functors $p_!$ and $q^*$ in derived categories preserve regular holonomic complexes (see \cite[Theorem 6.1.5]{HTT08}), the regularity of $\mathcal{M}$ will follow from that of the cohomologies of $\mathrm{HC}(\mathcal{M})$, which we prove below.
	
	Since
	\[ \mathrm{HC}(\mathcal{M}) := \mathrm{HC} \left(\mathbf{oblv}(\mathcal{M})\right) \simeq \mathbf{oblv} \left( \mathrm{HC}_{L, \chi}(\mathcal{M}) \right), \]
	the cohomologies $\mathrm{HC}^i(\mathcal{M})$ of $\mathrm{HC}(\mathcal{M})$ are endowed with $(L, \chi)$-monodromic structures (cf.\ Definition \ref{def:equivariant-derived} and the subsequent discussions), for any given $i \in \Z$. The $\mathscr{D}_{\mathcal{Y}}$-module $\mathrm{HC}^i(\mathcal{M})$ is holonomic, thus of finite length in the $(L, \chi)$-monodromic category. It suffices to show that each simple $(L, \chi)$-monodromic subquotient $\mathcal{N}$ of $\mathrm{HC}^i(\mathcal{M})$ is regular holonomic.
	
	In view of Lemma \ref{prop:regularity-prep}, it remains to check the $\mathfrak{t}$-local finiteness of $\mathcal{N}|_{\pi^{-1} W}$, where $W \subset \mathcal{B} \times \mathcal{B}$ ranges over some finite affine open covering; note that $\pi^{-1}W$ is still affine. This will follow from the same property for $\mathrm{HC}^i(\mathcal{M})$. As $p^! \mathcal{M} \simeq \mathscr{O}_{\mathcal{B}} \boxtimes \mathcal{M} [\dim \mathcal{B}]$ by \cite[VII.9.14 Corollary]{Bo87}, it remains to show the local $\mathfrak{t}$-finiteness of sections of $\Rder^j q_* (\mathscr{O}_{\mathcal{B}} \boxtimes \mathcal{M})$ over $\pi^{-1} W$, for all $j \geq 0$ and suitably chosen $W$.
	
	The required argument for the last step is given in \cite[p.156---158]{Gin93}. Let us conclude by a very brief sketch. Using the fact that $q$ is affine smooth and the description of its fibers (Lemma \ref{prop:horocycle-fiber}), one computes $\Rder^j q_* (\mathscr{O}_{\mathcal{B}} \boxtimes \mathcal{M})$ by an explicit relative de Rham resolution. The local $\mathfrak{t}$-finiteness is thus related to the known local $\mathcal{Z}(\mathfrak{g})$-finiteness of $M$ by the following observation. The $T$-action $[y_1, y_2] \mapsto [y_1, t y_2]$ on $\mathcal{Y}$ lifts to $\mathcal{B} \times G$ by
	\[ t: (Ty, g) \mapsto (Ty, y^{-1} t y g), \quad t \in T \]
	by which one computes the $\mathfrak{t}$-action on $\Rder^j q_* (\mathscr{O}_{\mathcal{B}} \boxtimes \mathcal{M})$. A standard fact says
	\begin{equation}\label{eqn:HC-unshifted}
		\mathcal{Z}(\mathfrak{g}) \subset U(\mathfrak{t}) \oplus U(\mathfrak{g}) \mathfrak{u}
	\end{equation}
	and the resulting projection $\mathcal{Z}(\mathfrak{g}) \to U(\mathfrak{t})$ so obtained is the Harish-Chandra map without shifting by half-sum of positive roots.
	
	Analogously, one may let $\mathfrak{u}$ act via $u: (Ty, G) \mapsto (Ty, y^{-1}uyg)$ by choosing local sections for $Y \to \mathcal{B}$. However $y^{-1} \mathfrak{u} y$ is the ``vertical direction'' over $q(Ty, g) = [y^{-1}, yg]$ relative to $q$ by Lemma \ref{prop:horocycle-fiber}. In view of \eqref{eqn:HC-unshifted}, this will eventually enable us to employ the local $\mathcal{Z}(\mathfrak{g})$-finiteness of $M$.
\end{proof}

\begin{corollary}[Cf.\ {\cite[Corollary 8.9.1]{Gin89}}]\label{prop:admissible-regular}
	Let $Z$ be a spherical homogeneous $G$-variety, and $K \subset G$ be a spherical subgroup. Then every $K$-admissible $D_Z$-module $M$ generates a regular holonomic $\mathscr{D}_Z$-module.
\end{corollary}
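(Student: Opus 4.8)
The plan is to deduce this from the group case treated in Theorem \ref{prop:L-regularity}, by pulling $\mathcal{M}$ back along a quotient map $G \to Z$.

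Since $\Bbbk$ is algebraically closed and $Z \neq \emptyset$, first choose $x_0 \in Z(\Bbbk)$ and set $H := \Stab_G(x_0)$, so that $Z \rightiso H \backslash G$ as homogeneous $G$-varieties; the sphericity of $Z$ says precisely that $H$ is a spherical subgroup of $G$. Let $\varpi : G \to H \backslash G = Z$ be the quotient morphism: it is a smooth surjection, it is an $H$-torsor for left translation by $H$, and it is equivariant for right translation by $G$. View $G$ as a $G^{\mathrm{op}} \times G$-variety via $x \cdot (a,b) = a x b$ and set $L := H^{\mathrm{op}} \times K \subset G^{\mathrm{op}} \times G$. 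A product of spherical homogeneous varieties for a product group is again spherical (a Borel of $G^{\mathrm{op}} \times G$ is a product of Borels, and open orbits multiply), so $L$ is spherical.

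The next step is to transport the data of $\mathcal{M}$ to $G$. Let $\mathcal{M}_G := \varpi^{*} \mathcal{M}$ be the $\mathscr{D}$-module inverse image, which agrees with $\varpi^{!}\mathcal{M}$ up to a cohomological shift. Since $\varpi$ is an $H$-torsor for the left $H$-action, descent endows $\mathcal{M}_G$ with a canonical $H^{\mathrm{op}}$-equivariant structure; since $\varpi$ is equivariant for the right $G$-action and $\mathcal{M}$ is $(K,\chi)$-monodromic, $\mathcal{M}_G$ inherits a $(K,\chi)$-monodromic structure for the right $K$-action (Definition \ref{def:equivariant-derived}). These two structures commute, giving $\mathcal{M}_G$ the structure of an $(L, (0,\chi))$-monodromic $\mathscr{D}_G$-module, where $(0,\chi)$ is a reductive character of $\mathfrak{l} = \mathfrak{h}^{\mathrm{op}} \times \mathfrak{k}$ because $\chi$ is reductive on $\mathfrak{k}$. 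To produce a generating $D_G$-module, use the $K$-admissibility of $M$ together with Lemma \ref{prop:monodromic-finiteness} and Remark \ref{rem:M0} to pick a finite-dimensional $M_0 \subset M$ generating $M$ over $D_Z$ and stable under $U(\mathfrak{k})$ and $\mathcal{Z}(\mathfrak{g})$; let $M_0' \subset \Gamma(G, \mathcal{M}_G)$ be its image under $\mathscr{O}_Z \hookrightarrow \varpi_{*}\mathscr{O}_G$. One then checks that $M_0'$ is finite-dimensional, generates $\mathcal{M}_G$ over $\mathscr{D}_G$ (the $\mathscr{D}$-module pull-back of a generating set generates the pull-back), and is stable under $U(\mathfrak{l})$ and $\mathcal{Z}(\mathfrak{g}^{\mathrm{op}} \times \mathfrak{g})$: indeed the right $\mathfrak{k}$- and $\mathcal{Z}(\mathfrak{g})$-actions on $\mathcal{M}_G$ are $\varpi$-related to those on $M_0 \subset M$, hence preserve $M_0'$; the left $\mathfrak{h}$-action is $\varpi$-related to the zero vector field, hence annihilates $M_0'$; and on $G$ the left and right $\mathcal{Z}(\mathfrak{g})$-actions coincide, both being realized by bi-invariant operators (cf.\ the remark following Definition \ref{def:K-admissible}). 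Hence $N := D_G \cdot M_0'$ is an $L$-admissible $D_G$-module generating $\mathcal{M}_G$, and Theorem \ref{prop:L-regularity} shows that $\mathcal{M}_G = \varpi^{*}\mathcal{M}$, equivalently $\varpi^{!}\mathcal{M}$, is regular holonomic.

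It remains to descend along $\varpi$. Regular holonomicity of a $\mathscr{D}_Z$-module may be checked étale-locally on $Z$, and since $\varpi$ is smooth and surjective it admits sections étale-locally on $Z$; inverse image along such a section preserves regular holonomic complexes by \cite[Theorem 6.1.5]{HTT08}, so the regular holonomicity of $\varpi^{!}\mathcal{M}$ established above descends to $\mathcal{M}$. The bulk of the work — though all essentially routine — lies in the middle step, namely assembling on $G$ a generating module that is at once finitely generated over $D_G$, locally $\mathcal{Z}(\mathfrak{g})$-finite, and monodromic with a reductive character, so that Theorem \ref{prop:L-regularity} applies verbatim; the descent of regular holonomicity along the smooth surjection $\varpi$ is standard but ought to be made explicit.
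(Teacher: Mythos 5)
Your proof is correct and follows essentially the same route as the paper's: choose $x_0$ to write $Z = H\backslash G$, pull $\mathcal{M}$ back along the smooth $H$-torsor $G \to Z$, check that the pullback is $L$-admissible for $L = H^{\mathrm{op}} \times K$ (which is again spherical), invoke Theorem \ref{prop:L-regularity}, and descend regularity along the smooth surjection. The only cosmetic differences are that you spell out the $L$-admissibility verification in more detail and argue the final descent via étale-local sections rather than citing \cite[VII.12.9]{Bo87} directly.
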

\begin{proof}
	We may assume $Z = H \backslash G$ where $H$ is a spherical subgroup of $G$. The quotient map $f: G \twoheadrightarrow Z$ is an $H$-torsor, hence smooth. By Corollary \ref{prop:holonomic}, $\mathcal{M}$ is holonomic, and so is $\mathcal{N} := f^* \mathcal{M}$. Note that $\mathcal{N}$ is concentrated at degree $0$ by the flatness of $f$, and it is generated by $N$, the finitely generated $D_G$-module formed by $f^*$-images of the elements of $M$.
	
	Let $L := H^{\mathrm{op}} \times K \subset G^{\mathrm{op}} \times G$. We contend that $N$ is $L$-admissible. Indeed, the local $\mathcal{Z}(\mathfrak{g})$-finiteness is inherited from $M$; so is the $H^{\mathrm{op}} \times K$-monodromic structure on $\mathcal{N}$ since it is pulled back from $H \backslash G$.
	
	Note that $L$ is a spherical subgroup of $G^{\mathrm{op}} \times G$. Therefore $\mathcal{N}$ is regular holonomic by Theorem \ref{prop:L-regularity}. This implies the regularity of $\mathcal{M}$ by \cite[VII. 12.9]{Bo87}.
\end{proof}

\section{Subanalytic sets and maps}\label{sec:subanalytic}
We will use the notion of subanalytic subsets and subanalytic functions on real analytic manifolds; the relevant theory can be found in \cite{BM88} or \cite[\S 8.2]{KS90}.

\begin{definition}\label{def:subanalytic-set}
	Let $M$ be a real analytic manifold. A subset $X \subset M$ is said to be semianalytic if each $x \in M$ has an open neighborhood $U$ such that $X \cap U = \bigcup_{i=1}^p \bigcap_{j=1}^q X_{ij}$, where each $X_{ij}$ is described by $f_{ij} = 0$ or $f_{ij} > 0$ for some family of analytic functions $f_{ij}: U \to \R$.
	
	We say $X \subset M$ is subanalytic if any $x \in X$ has an open neighborhood $U$ in $M$ such that $X \cap  U = \mathrm{pr}_1(A)$ for some relatively compact semianalytic subset $A \subset M \times N$, where $N$ is a real analytic manifold and $\mathrm{pr}_1: M \times N \to M$ is the projection.
\end{definition}

Below is a summary of basic properties we need. See the paragraph after \cite[Definition 3.1]{BM88},
\begin{itemize}
	\item Locally closed analytic submanifolds are semianalytic, hence subanalytic.
	\item Finite unions and finite intersections of subanalytic sets are subanalytic.
	\item Connected components of a subanalytic set are locally finite and subanalytic.
	\item The closure of a subanalytic subset is subanalytic.
	\item Complements of subanalytic sets are subanalytic; this is \cite[Theorem 3.10]{BM88}.
\end{itemize}

\begin{definition}
	Let $X \subset M$ be a subset, $N$ be a real analytic manifold. We say a function $f: X \to N$ is subanalytic if its graph $\Gamma_f \subset M \times N$ is subanalytic.
\end{definition}

\begin{itemize}
	\item Morphisms between analytic manifolds are subanalytic.
	\item The image of a relatively compact subanalytic set under a subanalytic mapping remains subanalytic; see the remark after \cite[Definition 3.2]{BM88}.
	\item Composites of subanalytic maps are subanalytic.
	\item Let $X \subset \R^n$ be a subanalytic subset, then the Euclidean distance $d(x, X)$ is subanalytic on $\R^n$; this is \cite[Remarks 3.11 (1)]{BM88}.
\end{itemize}

We are ready to state our main technical tool, \emph{Łojasiewicz's inequality}.

\begin{definition}\label{def:p-equivalent}
	Let $M$ be a set and $f, g: M \to \R_{\geq 0}$. We write $f \preccurlyeq g$ if there exist constants $a, C \in \R_{> 0}$ such that $f \leq C g^a$. If both $f \preccurlyeq g$ and $g \preccurlyeq f$ hold, we write $f \sim g$ and say they are \emph{power-equivalent}.
\end{definition}

\begin{theorem}[S.\ Łojasiewicz; see {\cite[Theorem 6.4]{BM88}}]\label{prop:inequality}
	Let $M$ be a real analytic manifold, $E \subset M$ a subanalytic subset and let $f, g: E \to \R$ be subanalytic functions with compact graphs in $E \times \R$. If $f^{-1}(0) \subset g^{-1}(0)$, then $|g| \preccurlyeq |f|$.
\end{theorem}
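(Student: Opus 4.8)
This is the Łojasiewicz inequality for subanalytic functions; one may simply invoke \cite[Theorem 6.4]{BM88}, but here is the argument I would carry out. First reduce to a compact, continuous situation. Compactness of $\Gamma_f$ forces $E$ to be compact (it is a projection of $\Gamma_f$) and $f(E)$, $g(E)$ to be bounded; a real-valued function into a compact interval with closed graph is continuous, so $f$ and $g$ are continuous. Replacing $f,g$ by $|f|,|g|$ --- still subanalytic, still with compact graphs, with the same zero loci and the same conclusion to prove --- we may assume $f,g\ge 0$ and must find $a,C>0$ with $g\le Cf^{a}$ on $E$. If $f^{-1}(0)=\varnothing$ then $f\ge\varepsilon>0$ on the compact set $E$ and $g\le (\max_E g)\,\varepsilon^{-1}f$, so assume henceforth $f^{-1}(0)\neq\varnothing$.

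The heart of the matter is a one-variable majorant. With $T:=\max_E f$ and $M:=\max_E g$, define
\[
  \phi\colon [0,T]\longrightarrow\R_{\ge 0},\qquad
  \phi(t):=\sup\bigl\{\,g(x):x\in E,\ f(x)\le t\,\bigr\},
\]
which is non-decreasing, bounded by $M$, and satisfies $\phi(0)=\sup\{g(x):f(x)=0\}=0$ because $f^{-1}(0)\subset g^{-1}(0)$. I would first check that $\phi$ is subanalytic: the set $\{(x,t):x\in E,\ 0\le f(x)\le t\le T\}$ is subanalytic and relatively compact, so its image under the subanalytic map $(x,t)\mapsto(t,g(x))$ is subanalytic, and the region $\{(t,s):0\le s\le\phi(t)\}$ is obtained from that image by closure, projection and complementation --- operations under which subanalyticity is preserved, the complementation case being \cite[Theorem 3.10]{BM88} --- so the graph $\Gamma_\phi$ is cut out subanalytically. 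Next, a compactness argument gives $\lim_{t\to 0+}\phi(t)=0$: if $\phi(t_n)=g(x_n)$ with $f(x_n)\le t_n\to 0$, pass to a convergent subsequence $x_n\to x_0$; then $f(x_0)=0$, so $x_0\in g^{-1}(0)$, whence $\phi(t_n)=g(x_n)\to g(x_0)=0$.

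Now invoke the one-variable structure theory: on a sufficiently small interval $(0,\varepsilon)$ a subanalytic function is either identically zero or admits a Puiseux-type expansion $\phi(t)=c\,t^{r}\bigl(1+o(1)\bigr)$ as $t\to 0+$, with $c>0$ and $r\in\Q$ (which rests on the fact that subanalytic subsets of $\R^2$ are semianalytic, together with resolution of singularities). Boundedness of $\phi$ forces $r\ge 0$, and $\lim_{t\to 0+}\phi(t)=0$ rules out $r=0$, so $r>0$ and $\phi(t)\le C_0 t^{r}$ for $0\le t\le\varepsilon$. Hence, for $x\in E$ with $f(x)\le\varepsilon$ we get $g(x)\le\phi\bigl(f(x)\bigr)\le C_0 f(x)^{r}$, while on the region $\{f\ge\varepsilon\}$ we have $g\le M\le M\varepsilon^{-r}f^{r}$; taking $a:=r$ and $C:=\max(C_0,\,M\varepsilon^{-r})$ gives $g\le Cf^{a}$ on all of $E$, i.e.\ $|g|\preccurlyeq|f|$.

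The two steps that use genuine subanalytic geometry, and which I expect to be the main obstacles, are: (i) that the majorant $\phi$ is subanalytic, which hinges on stability of the subanalytic class under projection and, crucially, complementation; and (ii) the Puiseux normal form for a subanalytic function of one real variable near an endpoint. Both are available from \cite{BM88} (and the resolution of singularities underlying it), so a self-contained account would either quote them or, for (ii), reduce via the uniformization/rectilinearization theorems. Everything else --- the reductions, the compactness arguments, and the bookkeeping of exponents --- is elementary. One can also organize the proof locally: establish $|g|\preccurlyeq|f|$ in a neighbourhood of each point of the compact set $f^{-1}(0)$ by the same majorant construction (or via the curve-selection lemma to read off the order along a real-analytic arc through the point), and then patch finitely many such local estimates together using the boundedness of $f$.
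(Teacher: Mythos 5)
The paper itself does not prove this statement: it quotes \cite[Theorem 6.4]{BM88} as a black box, so there is no ``paper's proof'' to compare against. Your reconstruction is a correct account of the classical argument (which is essentially what Bierstone--Milman, and Łojasiewicz before them, do): reduce to a bounded continuous situation, collapse the inequality to a one--variable majorant $\phi(t)=\max\{g(x):f(x)\le t\}$, and settle the one--variable case by Puiseux. The two load-bearing steps you flag --- subanalyticity of $\phi$ and the one-variable Puiseux normal form --- are indeed the substance, and both are available from \cite{BM88}.

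Two small remarks on the write-up. First, the sup in the definition of $\phi$ is automatically a max (the sublevel set $\{f\le t\}$ is compact in $E$ and $g$ is continuous), and that is what makes the hypograph $\{(t,s):0\le s\le\phi(t)\}$ simply the \emph{projection} of the bounded subanalytic set $\{(x,t,s):x\in E,\ f(x)\le t\le T,\ 0\le s\le g(x)\}$; complementation is only needed if one wants the graph $\Gamma_\phi$ itself rather than the hypograph, and for the conclusion the hypograph suffices, so you can avoid invoking \cite[Theorem 3.10]{BM88} here if you wish. Second, once you know $\phi$ is bounded, non-decreasing and subanalytic on $(0,T]$ with $\phi(0+)=0$, the bound $\phi(t)\le C_0 t^{r}$ near $0$ is precisely the one-dimensional instance of the inequality you are proving (take $E'=[0,T]$, $f'=t$, $g'=\phi$); so your Puiseux step is really a proof of that base case, and it is worth saying so explicitly to make the logical structure (induction on dimension via the majorant, with dimension one handled by Puiseux/uniformization) transparent. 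The alternative closing route via the curve-selection lemma that you mention is also standard and works. Altogether the proposal is sound.
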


As a particular case, the assumptions hold when $E$ is compact subanalytic and $f, g: E \to \R_{\geq 0}$ are continuous subanalytic functions. In that case, $f^{-1}(0) = g^{-1}(0)$ if and only if $f$ and $g$ are power-equivalent. 

We record some easy observations for later use.
\begin{lemma}\label{prop:subanalytic-descent}
	Let $\pi: X \to Y$ be a locally trivial fibration between real analytic manifolds.
	\begin{enumerate}[(i)]
		\item Let $E \subset Y$ be a subset. If $\pi^{-1}(E)$ is subanalytic in $X$, then $E$ is subanalytic in $Y$.
		\item Let $f_Y: Y \to \R$ be a function such that $f_X := f_Y \circ \pi$ is subanalytic on $X$, then $f_Y$ is subanalytic on $Y$.
	\end{enumerate}
\end{lemma}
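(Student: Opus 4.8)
The plan is to exploit that subanalyticity is a local property on the base and that $\pi$ is, by hypothesis, \emph{analytically} locally trivial. For part (i), fix $y \in E$ and choose an open neighborhood $U \subset Y$ of $y$ together with an analytic trivialization $\phi\colon \pi^{-1}(U) \rightiso U \times F$ over $U$, so that $\pi|_{\pi^{-1}(U)}$ becomes the first projection. Since $\pi^{-1}(E)$ is subanalytic in $X$, its restriction $\pi^{-1}(E) \cap \pi^{-1}(U)$ is subanalytic in the open set $\pi^{-1}(U)$, and transporting along the analytic diffeomorphism $\phi$ shows that $(E \cap U) \times F$ is subanalytic in $U \times F$. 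It then suffices to prove that $E \cap U$ is subanalytic in $U$, since $y \in E$ was arbitrary.

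For that last reduction I would intersect with a slice. Pick $p \in F$ (the fibres of a fibration being nonempty); the slice $U \times \{p\}$ is a closed analytic submanifold of $U \times F$, hence subanalytic, so $(E \cap U) \times \{p\} = \bigl((E\cap U)\times F\bigr) \cap \bigl(U \times \{p\}\bigr)$ is subanalytic in $U \times F$ as a finite intersection of subanalytic sets. As this set is contained in the locally closed analytic submanifold $U \times \{p\}$, it is subanalytic already \emph{in} $U \times \{p\}$ --- the notion of subanalyticity for a subset of a locally closed analytic submanifold does not depend on whether one computes it inside the submanifold or in the ambient manifold (a standard fact; cf.\ \cite{BM88}, \cite[\S 8.2]{KS90}). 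Identifying $U \times \{p\}$ with $U$ via $\pi$ (an analytic diffeomorphism) yields that $E \cap U$ is subanalytic in $U$, completing (i). Alternatively one can phrase the same argument using a local analytic section $s\colon U \to X$ of $\pi$ (which exists precisely because $\pi$ is analytically locally trivial), noting $s(E \cap U) = \pi^{-1}(E) \cap s(U)$.

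Part (ii) then follows formally from (i): the map $\pi \times \mathrm{id}_{\R}\colon X \times \R \to Y \times \R$ is again an analytic locally trivial fibration, and directly from the definition of graphs one has $\Gamma_{f_X} = (\pi \times \mathrm{id}_{\R})^{-1}(\Gamma_{f_Y})$. Since $\Gamma_{f_X}$ is subanalytic in $X \times \R$ by hypothesis, part (i) applied to $\pi \times \mathrm{id}_{\R}$ shows that $\Gamma_{f_Y}$ is subanalytic in $Y \times \R$, i.e.\ $f_Y$ is subanalytic. The only genuinely non-formal ingredient in the whole argument --- and the step I would be most careful about --- is the intrinsic character of subanalyticity with respect to analytic submanifolds used in the slicing step; everything else is just unwinding definitions, together with the (standing) interpretation of ``fibration'' as analytic local triviality, which is what guarantees the existence of the analytic charts and sections.
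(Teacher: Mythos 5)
Your proof is correct, and part (ii) coincides with the paper's argument (pull the graph back along $\pi \times \mathrm{id}_{\R}$, then apply (i)). For part (i), both you and the paper first localize to the trivial case $X \cong U \times F$ over an open $U \subset Y$, but the core step is genuinely different. The paper unwinds Definition \ref{def:subanalytic-set} directly: the relatively compact semianalytic witness $A \subset (U \times F) \times N$ for $(E \cap U) \times F$ is simply regrouped as a subset of $U \times (F \times N)$, which is then a witness for $E \cap U$; no further input is needed. You instead slice: you intersect $(E \cap U) \times F$ with $U \times \{p\}$ and invoke the fact that subanalyticity of a subset contained in a locally closed analytic submanifold is intrinsic, i.e.\ the same whether measured in the submanifold or the ambient manifold. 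That fact is true and standard, and you rightly flag it as the single non-formal step; but it does require its own small verification (choose adapted coordinates and check that the local semianalytic descriptions restrict to and extend from the submanifold), which the paper's repackaging of the witness sidesteps entirely. So the paper's route is more self-contained at the level of the quoted definition, while yours is slightly more conceptual at the cost of outsourcing the work to one additional standard lemma. Both are valid.
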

\begin{proof}
	Consider (i). By the local nature of Definition \ref{def:subanalytic-set}, upon retracting $Y$ we may assume $X = Y \times F$ and $\pi$ is the first projection, where $F$ is some real analytic manifold. For every $y \in Y$, pick $f \in F$. Since $E \times F$ is subanalytic in $Y \times F$,  there exist
	\begin{compactitem}
		\item an open neighborhood $U_Y \times U_F$ of $(y, f)$ in $Y \times F$,
		\item a real analytic manifold $N$,
		\item a relative compact semianalytic subset $A \subset (Y \times F) \times N$,
	\end{compactitem}
	such that $(E \times F) \cap (U_Y \times U_F) = \mathrm{pr}_{12}(A)$, where $\mathrm{pr}_{12}: (Y \times F) \times N \to Y \times F$ is the projection. It follows that $E \cap U_Y  = \pi \left( \mathrm{pr}_{12}(A) \right)$.

	Taking the ``$N$'' in Definition \ref{def:subanalytic-set} to be the $N \times F$ above, the preceding discussion shows that $E$ is subanalytic, by varying $y$.

	As for (ii), we have to show the graph $\Gamma_{f_Y} \subset Y \times \R$ is subanalytic. Observe that $\Gamma_{f_X} = (\pi \times \identity_{\R})^{-1}(\Gamma_{f_Y})$. We conclude by applying (i) to $\pi \times \identity_{\R}: X \times \R \to Y \times \R$.
\end{proof}

\section{Growth conditions}\label{sec:growth}
\begin{definition}\label{def:p-growth}
	Consider a set $M$, its subset $V$ and a function $p: M \to \R_{\geq 0}$. We say a function $f: V \to \CC$ has $p$-bounded growth relative to $M$, if there exists $a \in \R_{> 0}$ such that $p^a |f|$ is bounded on $V$. This notion depends only on the power-equivalence class of $p$ (Definition \ref{def:p-equivalent}).
\end{definition}

\begin{lemma}\label{prop:p-growth-pullback}
	Let $M$ be a topological space, $V$ an open subset and $p: M \to \R_{\geq 0}$ be continuous. Let $\pi: M' \to M$ be a continuous map. If $f: V \to \CC$ has $p$-bounded growth, then $f \pi: \pi^{-1}(V) \to \CC$ has $p\pi$-bounded growth; the converse holds if $f$ is continuous and $\pi: \pi^{-1}(V) \to V$ has dense image.
\end{lemma}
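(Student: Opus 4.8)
The plan is to argue directly from Definition \ref{def:p-growth}; no machinery is needed beyond continuity and density, and the only hypothesis that does real work is the one attached to the converse.

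For the forward implication I would start by fixing witnesses of $p$-bounded growth: choose $a \in \R_{>0}$ and $C \in \R_{>0}$ with $p(x)^a |f(x)| \leq C$ for every $x \in V$. Then for any $x' \in \pi^{-1}(V)$ one has $\pi(x') \in V$, so $(p\pi)(x')^a\, |f\pi(x')| = p(\pi(x'))^a\, |f(\pi(x'))| \leq C$. Thus the same pair $(a,C)$ witnesses that $f\pi$ has $p\pi$-bounded growth relative to $M'$; in particular the exponent is unchanged. There is essentially nothing to do here beyond substitution.

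For the converse I would assume $f$ continuous, $\pi(\pi^{-1}(V))$ dense in $V$, and $(p\pi)^a |f\pi| \leq C$ on $\pi^{-1}(V)$ for some $a, C > 0$. The idea is to consider $g := p^a |f| \colon V \to \R_{\geq 0}$, which is continuous because $p$ and $f$ are. The hypothesis says exactly $g\circ\pi \leq C$ on $\pi^{-1}(V)$, i.e.\ $g \leq C$ on the subset $\pi(\pi^{-1}(V)) \subset V$. Now the set $\{x \in V : g(x) \leq C\}$ is closed in $V$ by continuity of $g$ and contains a dense subset of $V$, hence it equals $V$. Therefore $p^a|f| \leq C$ on all of $V$, so $f$ has $p$-bounded growth relative to $M$, again with the same exponent $a$.

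I do not expect any genuine obstacle: the statement is a formal consequence of the definition together with continuity and density, and the density/continuity hypothesis is precisely what allows the passage from the dense image $\pi(\pi^{-1}(V))$ to all of $V$ in the converse direction (dropping either would break that step). The one point worth recording for later use is that both directions preserve the exponent $a$, which is what makes this lemma convenient for transferring moderate-growth estimates along fibrations.
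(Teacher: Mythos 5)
Your proof is correct and is precisely the argument the paper has in mind when it writes ``Immediate'': the forward direction is substitution, and the converse is the standard fact that a continuous function bounded by $C$ on a dense subset of $V$ is bounded by $C$ on all of $V$. Both the use of continuity of $p^a|f|$ and the use of density are placed exactly where the hypotheses of the lemma demand them, and your observation that the exponent $a$ is preserved is a useful remark even though the paper does not record it explicitly.
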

\begin{proof}
	Immediate.
\end{proof}

The utility of this notion is explained by the following result.
\begin{lemma}\label{prop:existence-p}
	Suppose that $M$ is a compact real analytic manifold, and $U \subset M$ is an open subanalytic subset. Then there exists a subanalytic continuous function $p: M \to \R_{\geq 0}$ such that $U = \left\{ x \in M: p(x) > 0 \right\}$.
\end{lemma}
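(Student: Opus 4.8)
The plan is to realize $p$ as a Euclidean distance function, for which subanalyticity has already been recorded in \S\ref{sec:subanalytic}. Observe first that $F := M \smallsetminus U$ is closed in $M$ (because $U$ is open), hence compact, and subanalytic in $M$ (complements of subanalytic sets are subanalytic). It therefore suffices to produce a continuous subanalytic function $p : M \to \R_{\geq 0}$ with $p^{-1}(0) = F$, since then $\{x \in M : p(x) > 0\} = M \smallsetminus F = U$.

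To bring in the distance function I would fix a real analytic embedding $\iota : M \hookrightarrow \R^N$, which exists because $M$ is a compact real analytic manifold (Morrey--Grauert). Then $\iota(M)$ is a compact real analytic submanifold of $\R^N$, in particular locally closed and analytic, hence subanalytic in $\R^N$; and $\iota(F)$, being subanalytic inside the analytically embedded submanifold $\iota(M)$, is subanalytic in $\R^N$ as well. (Locally $\iota(M)$ is the graph of an analytic map over a coordinate subspace, so analytic functions on $\iota(M)$ extend locally to $\R^N$; unwinding Definition \ref{def:subanalytic-set} then shows that subanalyticity of a subset of $\iota(M)$ computed intrinsically agrees with subanalyticity in $\R^N$.) Now set
\[ p(x) := d\bigl(\iota(x),\, \iota(F)\bigr), \qquad x \in M, \]
the Euclidean distance in $\R^N$ from $\iota(x)$ to the closed set $\iota(F)$.

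Then I would check the three requirements. Continuity of $p$ is clear, since $y \mapsto d(y, \iota(F))$ is $1$-Lipschitz on $\R^N$. Because $\iota(F)$ is closed, $p(x) = 0$ if and only if $\iota(x) \in \iota(F)$, i.e.\ $x \in F$; hence $p^{-1}(0) = F$ and $\{x \in M : p(x) > 0\} = U$, as desired. For subanalyticity, the function $d(\,\cdot\,, \iota(F)) : \R^N \to \R_{\geq 0}$ is subanalytic by \cite[Remarks 3.11 (1)]{BM88} because $\iota(F)$ is subanalytic in $\R^N$; its graph $\Gamma \subset \R^N \times \R$ is thus subanalytic, and the graph of $p$ in $M \times \R$ equals $(\iota \times \identity_\R)^{-1}(\Gamma)$, the preimage of a subanalytic set under the analytic (hence subanalytic) map $\iota \times \identity_\R$, so it is subanalytic. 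This gives the claim.

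There is no serious obstacle here; the one non-formal input is the analytic embedding $\iota$, which is exactly what allows the Euclidean distance and the already-cited subanalyticity of distance functions to be used, and the point requiring a little care is the compatibility of the two notions of subanalyticity under $\iota$, handled by the parenthetical remark above. Should one prefer to avoid the embedding theorem, the same $p$ can be assembled from a finite analytic atlas: cover $M$ by relatively compact charts $\varphi_i : U_i \to \Omega_i \subset \R^{n}$, let $d_i$ be the Euclidean distance in $\Omega_i$ to $\varphi_i(F \cap U_i)$, choose continuous subanalytic functions $\psi_i \geq 0$ with $\mathrm{supp}\,\psi_i \Subset U_i$ and $\sum_i \psi_i > 0$ on $M$, and put $p := \sum_i \psi_i \, d_i$ with each summand extended by zero; this is continuous and subanalytic, vanishes on $F$, and is strictly positive on $U$ because over any point of $U$ some summand is. I expect the embedding route to be the cleaner one to write up.
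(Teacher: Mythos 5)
Your proof takes essentially the same route as the paper: observe that $M \smallsetminus U$ is closed and subanalytic, embed $M$ analytically into $\R^N$ (the paper cites \cite[Theorem 1]{ABT79}), and let $p$ be the Euclidean distance to the image of $M \smallsetminus U$, with subanalyticity from \cite[Remarks 3.11 (1)]{BM88}. Your write-up is somewhat more detailed (checking compatibility of intrinsic versus ambient subanalyticity and the graph-pullback argument) but the argument is the same.
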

\begin{proof}
	Without loss of generality we may assume $M$ connected. Recall that $M \smallsetminus U \subset M$ is closed and subanalytic. There exists a closed immersion $i: M \hookrightarrow \R^N$ of real analytic spaces, by \cite[Theorem 1]{ABT79}. Now take
	\[ p(x) := d\left( i(x), i(M \smallsetminus U)\right), \quad x \in M \]
	where $d$ is the Euclidean distance function on $\R^N$. Since $i(M \smallsetminus U) \subset \R^n$ is subanalytic, $d(\cdot, i(M \smallsetminus U))$ is subanalytic continuous on $\R^N$, hence so is $p$.
\end{proof}

Now we turn to the case of real algebraic varieties.
\begin{definition-proposition}\label{def:moderate}
	Let $X$ be a smooth $\R$-variety.
	\begin{itemize}
		\item There exists an open immersion $j: X \to \overline{X}$ with Zariski-dense image, such that $\overline{X}$ is smooth and $\overline{X}(\R)$ is compact.
		\item For each $j$ above, there exists a continuous subanalytic function $p: \overline{X}(\R) \to \R_{\geq 0}$ such that
		\[ j(X)(\R) = \left\{ x \in \overline{X}(\R) : p(x) > 0 \right\}. \]
		In this case, $p$ is said to be \emph{adapted} to $j$.
	\end{itemize}

	Let $V$ be a connected component of $X(\R)$. We say a continuous function $f: V \to \CC$ has \emph{moderate growth at infinity} if $f$ has $p$-bounded growth relative to $\overline{X}(\R)$. This notion is independent of $j: X \hookrightarrow \overline{X}$ and $p$.
\end{definition-proposition}
\begin{proof}
	The existence of $j$ is ensured by Nagata's theorem followed by Hironaka's resolution of singularities. The existence of $p$ follows from Lemma \ref{prop:existence-p}. Consider the category $\mathcal{C}$ of open immersions $j: X \to \overline{X}$ as above, the morphisms from $j_1: X \to \overline{X}_1$ to $j_2: X \to \overline{X}_2$ being morphisms $\pi: \overline{X}_1 \to \overline{X}_2$ such that $\pi j_1 = j_2$. We claim that $\mathcal{C}$ is co-filtrant.
	
	Indeed, given $j_i: X \to \overline{X}_i$ for $i=1, 2$, take $\overline{X}'$ to be the schematic closure of the diagonal image of $X$ in $\overline{X}_1 \dtimes{\R} \overline{X}_2$. Then we obtain an open dense immersion $j' = j_1 \times j_2: X \hookrightarrow \overline{X}'$; thus $\overline{X}'(\R)$ is compact, but $\overline{X}'$ is not necessarily smooth. To remedy this, take a resolution of singularities $\rho: \overline{X} \twoheadrightarrow \overline{X}'$ which is proper and restricts to $\rho^{-1} (j'(X)) \rightiso j'(X)$. Then $j: X \hookrightarrow \overline{X}$ dominates both $j_1$ and $j_2$ in $\mathcal{C}$.

	Let $f: V \to \CC$ be a continuous function. Extending $f$ by zero, we may assume $V = X(\R)$. Consider a morphism $\pi$ in $\mathcal{C}$ from $j': X \to \overline{X}'$ to $j: X \to \overline{X}$. Let $p$ (resp.\ $p'$) be adapted to $j$ (resp.\ $j'$). We claim that $f$ has $p$-bounded growth relative to $\overline{X}(\R)$ if and only if it has $p'$-bounded growth relative to $\overline{X}'(\R)$. In view of the previous step, this will entail that the notion of moderate growth at infinity is independent of all choices.
	
	We first show that $\pi^{-1}(j(X)) = j'(X)$. This is because $\pi: \pi^{-1}(j(X)) \to j(X)$ has a section $\sigma: j(X) \leftiso X \rightiso j'(X) \subset \pi^{-1}(j(X))$, thus $\sigma$ is a closed immersion by \cite[28.3.1]{stacks-project} since $\pi^{-1}(j(X))$ is separated. As $j'(X)$ is also open in the irreducible subset $\pi^{-1}(j(X))$, we see $\pi^{-1}(j(X)) = j'(X)$. Now $p', p \pi: \overline{X}'(\R) \to \R_{\geq 0}$ both have $j'(X)(\R)$ as their non-zero locus. Theorem \ref{prop:inequality} implies that $p'$ and $p\pi$ are power-equivalent. Hence $p'$-bounded growth and $p\pi$-bounded growth relative to $\overline{X}'(\R)$ are equivalent.

	Moreover, $p\pi$-bounded growth relative to $\overline{X}'(\R)$ is equivalent to $p$-bounded growth relative to $\overline{X}(\R)$ for continuous functions on $X(\R)$ (Lemma \ref{prop:p-growth-pullback}). This establishes our claim.
\end{proof}

For a systematic treatise of tempered functions on manifolds definable in polynomially bounded $o$-minimal structures, see \cite{Sha20}. The author is grateful to one of the referees for this suggestion.

Below is an intrinsic characterization of the functions $p|_{X(\R)}$, or rather their inverses.
\begin{proposition}\label{prop:w}
	Let $j: X \hookrightarrow \overline{X}$ be as in Definition--Proposition \ref{def:moderate}. Suppose that $w: X(\R) \to \R_{> 0}$ satisfies
	\begin{compactenum}[(i)]
		\item $w$ is continuous and subanalytic;
		\item for each constant $B > 0$, the subset $\{ x \in X(\R): w(x) \leq B \}$ is compact.
	\end{compactenum}
	Then $w^{-1}$ extends continuously to a unique subanalytic function $p: \overline{X}(\R) \to \R_{\geq 0}$ adapted to $j$. Conversely, for any $p$ adapted to $j$, the function $w(x) := p(x)^{-1}$ on $X(\R)$ satisfies (i) and (ii). 
\end{proposition}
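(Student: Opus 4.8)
The plan is to prove the two implications separately, the substantive one being the forward direction, which constructs $p$ by extending $1/w$ across the boundary by zero. First I would fix notation: put $D := \overline{X} \smallsetminus j(X)$, a proper closed subvariety, and $\partial := D(\R) = \overline{X}(\R) \smallsetminus j(X)(\R)$; since $\overline{X}$ is smooth and $D$ a proper subvariety, $\partial$ has empty interior in the manifold $\overline{X}(\R)$, so $j(X)(\R)$ is dense, and I identify $X(\R)$ with $j(X)(\R)$ throughout. Given $w$ satisfying (i) and (ii), define $p: \overline{X}(\R) \to \R_{\geq 0}$ by $p := 1/w$ on $j(X)(\R)$ and $p := 0$ on $\partial$. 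The key observation is that (ii) is equivalent to $w(x) \to +\infty$ as $x \to \partial$: for each $B > 0$ the set $\{w \leq B\}$ is compact in $X(\R)$ by (ii), hence closed in the Hausdorff space $\overline{X}(\R)$ and disjoint from $\partial$, so $w > B$ on some neighbourhood of $\partial$. This yields continuity of $p$ (clear on the open set where $w$ is continuous and positive, and $p = 0 = \lim 1/w$ at points of $\partial$), shows $p$ is adapted to $j$ since $\{p > 0\} = j(X)(\R)$ by construction, and makes uniqueness immediate, since two continuous extensions of $1/w$ must agree on the dense set $j(X)(\R)$.

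The one step requiring genuine care is the subanalyticity of $p$. Here I would show that $\Gamma_p$ equals the closure of $\Gamma_{1/w}$ taken inside $\overline{X}(\R) \times \R$: the inclusion $\supseteq$ uses density of $j(X)(\R)$ together with $p \to 0$ at $\partial$, while $\subseteq$ uses continuity of $w$ on $X(\R)$ and $w \to +\infty$ at $\partial$, so that any limit point of $\Gamma_{1/w}$ lying over $\partial$ has vanishing second coordinate. Now $1/w$ is subanalytic on $X(\R)$ --- being the reciprocal of the positive continuous subanalytic function $w$, by the standard closure properties of subanalytic functions, cf.\ \cite{BM88} --- hence $\Gamma_{1/w}$ is subanalytic in $X(\R) \times \R$, and therefore also in $\overline{X}(\R) \times \R$, because $X(\R) \times \R$ is an open submanifold of it and subanalyticity is tested at points of the set (Definition \ref{def:subanalytic-set}). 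Its closure $\Gamma_p$ is then subanalytic, so $p$ is a subanalytic function. I expect this bookkeeping with subanalyticity --- identifying $\Gamma_p$ with the closure of $\Gamma_{1/w}$ and keeping straight the passage between the open submanifold $X(\R)\times\R$ and the compact ambient space --- to be the main obstacle; everything else is an elementary consequence of the compactness of $\overline{X}(\R)$.

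For the converse, given $p$ adapted to $j$, I would set $w := 1/p|_{X(\R)}$, which is well defined and positive because $p > 0$ exactly on $j(X)(\R)$. It is continuous and subanalytic on $X(\R)$ --- again a reciprocal of a positive continuous subanalytic function --- which gives (i); and for (ii) one observes that $\{x \in X(\R) : w(x) \leq B\} = \{x \in \overline{X}(\R) : p(x) \geq B^{-1}\}$ is closed in the compact space $\overline{X}(\R)$, hence compact, and is contained in $\{p > 0\} = j(X)(\R)$, so it is a compact subset of $X(\R)$, which is exactly (ii). This completes the plan.
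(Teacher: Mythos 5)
Your proof is correct and takes essentially the same route as the paper's: extend $w^{-1}$ by zero, observe that (ii) forces $w\to+\infty$ at the boundary, and show the extension is subanalytic by identifying its graph with the closure of $\Gamma_{w^{-1}}$ in $\overline{X}(\R)\times\R$. You spell out a few extra details (uniqueness, the precise equality $\Gamma_p=\overline{\Gamma_{w^{-1}}}$, and the converse) that the paper leaves implicit, but the argument is the same.
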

\begin{proof}
	Put $\partial X := \overline{X} \smallsetminus X$. We have to extend $w^{-1}: X(\R) \to \R_{> 0}$ to a continuous subanalytic function on $\overline{X}(\R)$ adapted to $j$. By (ii), we see $w(x) \to +\infty$ when $x$ tends to $\partial X(\R)$. Let $\Gamma_{w^{-1}} \subset X(\R) \times \R_{> 0}$ be the graph of $w^{-1}$, which is subanalytic by (i). Its closure $\overline{\Gamma_{w^{-1}}}$ in $\overline{X}(\R) \times \R$ is still subanalytic; moreover $\overline{\Gamma_{w^{-1}}} \cap (\partial X(\R) \times \R) = \partial X(\R) \times \{0\}$. Hence $w^{-1}$ extends to a continuous subanalytic function $p: \overline{X}(\R) \to \R$ by zero. The converse is easy.
\end{proof}

\begin{remark}\label{rem:w}
	The real algebraic structures of $X$ and $\overline{X}$ play no roles in the proof above. Furthermore, we do not need to assume $\overline{X}(\R)$ is smooth: it suffices to embed it into a real analytic manifold in order to talk about subanalyticity.
\end{remark}

When $X$ is a homogeneous $G$-variety for a connected reductive $\R$-group $G$, we shall choose $w$ with additional properties. To begin with, define the norm $\|\cdot\|: G(\R) \to \R_{> 0}$ as in \cite[2.1.2]{BK14}. More precisely, choose an algebraic embedding $\iota: G \to \GL(N)$ and set
\begin{equation}\label{eqn:norm-group}
	\|g\| := \Tr\left( \iota(g) \cdot {}^t \iota(g) \right) + \Tr\left( \iota(g^{-1}) \cdot {}^t \iota(g)^{-1} \right), \quad g \in G(\R).
\end{equation}
On the other hand, we also have the function on the connected component $G(\R)^\circ$ defined by $\|g\|_{\max} := \exp(d(g, 1))$ where $d$ comes from a left-invariant Riemannian metric on $G(\R)^\circ$. According to \cite[Lemma 2.1]{BK14}, $\|\cdot\|_{\max}$ and $\|\cdot\|$ are power-equivalent as functions on $G(\R)^\circ$.

The following is a variant of the \emph{weight functions} discussed in \cite[5.3]{KSS14}, which is suitable for harmonic analysis on $G$-varieties.

\begin{lemma}\label{prop:weight}
	Suppose that $X$ is a homogeneous $G$-variety. There exists a function $w: X(\R) \to \R_{\geq 1}$ such that the properties (i) and (ii) in Proposition \ref{prop:w} are satisfied, and that there exists $C > 0$ and $N \in \Z_{\geq 1}$ such that $w(xg) \leq C \|g\|^N w(x)$ for all $g \in G(\R)^\circ$ and $x \in X(\R)$.
\end{lemma}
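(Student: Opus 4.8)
The plan is to build $w$ by hand from a $G$-equivariant projective compactification of $X$, choosing the boundary function $p$ of Definition--Proposition \ref{def:moderate} in a shape for which the behaviour under the $G$-action is transparent. We may assume $X(\R)\neq\emptyset$, the statement being vacuous otherwise.

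Since $X$ is homogeneous it is smooth and quasi-projective, so Sumihiro's theorem yields a $G$-equivariant open immersion $j\colon X\hookrightarrow\overline{X}$ with $\overline{X}$ a normal projective $\R$-variety, together with a suitable power $\mathcal{L}$ of an ample line bundle on $\overline{X}$ that carries a $G$-linearization and is very ample. Put $W:=H^0(\overline{X},\mathcal{L})^\vee$, a finite-dimensional algebraic $G$-representation over $\R$; the associated closed $G$-equivariant immersion $\overline{X}\hookrightarrow\mathbb{P}(W)$ lets us view $\overline{X}(\R)$ as a compact subset of the real-analytic manifold $\mathbb{P}(W)(\R)$, so that subanalyticity on $\overline{X}(\R)$ makes sense (no smoothness of $\overline{X}$ is needed, cf.\ Remark \ref{rem:w}). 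The complement $D:=\overline{X}\smallsetminus X$ is a $G$-stable closed subvariety defined over $\R$; being closed in the projective variety $\overline{X}$, for $d$ large enough we have $D=\overline{X}\cap V(f_1,\dots,f_k)$ set-theoretically, for some homogeneous $f_1,\dots,f_k$ of degree $d$ defined over $\R$. Let $F_1,\dots,F_r$ be an $\R$-basis of the (finite-dimensional) $G$-submodule $U_0\subset\Sym^d W^\vee$ generated by $f_1,\dots,f_k$. Because $D$ is $G$-stable, every element of $U_0$ vanishes on $D$, while $U_0$ contains the $f_i$; hence $D=\overline{X}\cap V(F_1,\dots,F_r)$ set-theoretically as well.

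Now fix a Euclidean norm $\|\cdot\|$ on $W$ and set, for $x\in\overline{X}(\R)$ with homogeneous coordinate vector $\tilde{x}\in W$,
\[ p(x):=\frac{\sum_{j=1}^r F_j(\tilde{x})^2}{\|\tilde{x}\|^{2d}}. \]
This is a well-defined, continuous, semialgebraic (hence subanalytic) function on $\overline{X}(\R)$ whose non-vanishing locus is $\overline{X}(\R)\smallsetminus D(\R)=X(\R)$; after rescaling the $F_j$ we may assume $p\leq 1$ on the compact set $\overline{X}(\R)$. Thus $p$ is adapted to $j$, and by the converse part of Proposition \ref{prop:w} the function $w:=p^{-1}\colon X(\R)\to\R_{\geq 1}$ satisfies conditions (i) and (ii). For the growth estimate, let $\rho\colon G\to\GL(W)$ denote the linearization, so that $xg$ has coordinate vector $\tilde{x}\rho(g)$, whence $\|\tilde{x}\rho(g)\|\leq\|\rho(g)\|_{\mathrm{op}}\|\tilde{x}\|$; moreover $\big(F_j(\tilde{x}\rho(g))\big)_j=M(g)\big(F_j(\tilde{x})\big)_j$, where $M(g)$ is the matrix, in the basis $F_1,\dots,F_r$, of the action of $g$ on $U_0$, so that $\sum_j F_j(\tilde{x}\rho(g))^2\geq\|M(g)^{-1}\|_{\mathrm{op}}^{-2}\sum_j F_j(\tilde{x})^2$. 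The matrix entries of $\rho(g)^{\pm1}$ and of $M(g)^{\pm1}$ are regular functions on $G$, hence are dominated by powers of $\|g\|$ by the basic properties of $\|\cdot\|$ (see \cite[2.1.2]{BK14} and \eqref{eqn:norm-group}): there are $C_1,C_2>0$ and $N_1,N_2\in\Z_{\geq 1}$ with $\|\rho(g)\|_{\mathrm{op}}\leq C_1\|g\|^{N_1}$ and $\|M(g)^{-1}\|_{\mathrm{op}}\leq C_2\|g\|^{N_2}$ for all $g\in G(\R)$. Combining these,
\[ w(xg)=\frac{\|\tilde{x}\rho(g)\|^{2d}}{\sum_j F_j(\tilde{x}\rho(g))^2}\leq C_1^{2d}C_2^{2}\,\|g\|^{2dN_1+2N_2}\,w(x), \]
which is the asserted bound with $C:=C_1^{2d}C_2^{2}$ and $N:=2dN_1+2N_2\in\Z_{\geq 1}$, a fortiori for $g\in G(\R)^\circ$.

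The only genuine subtlety lies in the setup: one must secure the compactification, the very ample line bundle, and the equations cutting out the boundary all $G$-equivariantly and over $\R$ — which is exactly what Sumihiro's theorem and the $\R$-rationality of $D$ provide — and one must resist the temptation to take a generic adapted $p$ (say a distance function to the boundary), for which no transformation law under $G$ is visible. Once $p$ is in the homogeneous-rational form above, the transformation law, and hence the growth estimate, reduces to the elementary fact underlying the norm of \cite{BK14} that matrix coefficients of algebraic representations of $G$ are dominated by powers of $\|g\|$.
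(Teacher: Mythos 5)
Your proof is correct and takes a genuinely different route. The paper works component by component in $X(\R) = \bigsqcup_i x_i G(\R)^\circ$, sets $\hat w(x_i g) := \exp(d(H_i,g))$ for a left-invariant Riemannian metric $d$ on $G(\R)^\circ$, and gets the growth bound from the triangle inequality together with the power-equivalence of $\|\cdot\|$ and $\|\cdot\|_{\max}$ from \cite{BK14}; subanalyticity of $w$ then rests on Tamm's theorem \cite[Theorem 3.5.2]{Ta81} for the Riemannian distance. You instead build a global, manifestly $G$-covariant $p$ from a $G$-equivariant projective compactification and polynomial equations for the boundary; the resulting $w = p^{-1}$ is semialgebraic (indeed real-analytic on $X(\R)$), sublevel compactness is immediate from compactness of $\overline{X}(\R)$, and the growth bound comes out for all $g \in G(\R)$ rather than merely $G(\R)^\circ$. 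The paper's route avoids any algebraic-geometric setup (no compactification, no boundary equations) at the price of non-trivial subanalytic input and of gluing over components; your route front-loads the algebraic geometry and is then entirely elementary, with the transformation law visible by design. Two small notes. First, for a homogeneous space $H\backslash G$ the equivariant projective embedding is available more cheaply than via Sumihiro: Chevalley's theorem gives a $G$-module $V$ over $\R$ and a line $\ell\in\mathbb{P}(V)$ with $\Stab_G(\ell)=H$, the orbit closure is a $G$-stable projective completion, and $\mathcal{O}(1)$ is automatically $G$-linearized, which sidesteps the separate discussion of linearizing $\mathcal{L}$. Second, your closing remark about resisting a distance function to the boundary is aimed at a real pitfall, but the paper in fact does not take that route either — it uses a left-invariant distance on the group, which carries its own transformation law via the triangle inequality — so the two arguments are closer in spirit than your final sentence suggests.
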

\begin{proof}
	Choose points $x_1, \ldots, x_n \in X(\R)$ so that $X(\R)$ decomposes into connected components
	\[ X(\R) = \bigsqcup_{i=1}^n x_i G(\R)^\circ \simeq \bigsqcup_{i=1}^n H_i \backslash G(\R)^\circ , \quad H_i := \Stab_{G(\R)^\circ}(x_i). \]
	Set $X_i := H_i \backslash G(\R)^\circ$. It suffices to fix $1 \leq i \leq n$ and define a function $w: X_i \to \R_{> 0}$ with the required properties.
	
	Consider $\hat{w}(x_i g) := \exp(d(H_i, g))$. It is continuous and subanalytic in $g \in G(\R)^\circ$ since $d(H_i, g)$ is. Indeed, $d(\cdot, \cdot)$ is subanalytic on $G(\R)^\circ \times G(\R)^\circ$ by a general result \cite[Theorem 3.5.2]{Ta81}; as for $d(H_i, \cdot)$, repeat the arguments in \cite[Remarks 3.11]{BM88}.

	The function $\hat{w}$ factors through $w: X_i \to \R_{> 0}$. Lemma \ref{prop:subanalytic-descent} (ii) implies $w$ is subanalytic, and $w$ is clearly continuous. Recall that $d(\cdot, \cdot)$ is left invariant. For any $B$, the closed subset $\{ x \in X_i: w(x) \leq B \}$ is compact since it is contained in the image of the compact $\{ g \in G(\R)^\circ : \|g\|_{\max} \leq 2B \}$ under $g \mapsto x_i g$.
	
	Suppose $g, t \in G(\R)^\circ$. From $d(H_i, gt) \leq d(H_i, g) + d(g, gt)$ we obtain $\hat{w}(x_i gt) \leq \|t\|_{\max} \hat{w}(x_i g)$. The required estimate on $w(xg)$ follows from the power-equivalence between $\|\cdot\|$ and $\|\cdot\|_{\max}$.
\end{proof}

Observe that if $w$ satisfies the requirements of Lemma \ref{prop:weight}, so does $w^\alpha$ for any $\alpha > 0$.

\section{Growth of regular holonomic solutions}\label{sec:Deligne}
We apply the formalism of \S\ref{sec:growth} to the solutions of regular holonomic systems. As the first step, we relate the notion of $p$-bounded growth to the following growth condition taken from \cite{Del70,KS16} that appears frequently in microlocal analysis.

\begin{definition}\label{def:growth}
	Let $M$ be a real analytic manifold and $V \subset M$ be an open subset. We say a continuous function $f$ on $V$ has \emph{polynomial growth} at $x \in M$ if for any sufficiently small compact neighborhood $K \ni x$ in $M$, there exists $N \in \Z_{\geq 1}$ such that
	\begin{equation}\label{eqn:tempered}
		\sup_{y \in K \cap V} d(y, K \smallsetminus V)^N |f(y)| < +\infty;
	\end{equation}
	here $d$ is the Euclidean distance relative to an analytic coordinate chart on $K$, and the $\sup := 0$ when $K \cap V = \emptyset$ or $K \subset V$. We say $f$ has polynomial growth relative to $M$ if it so at every $x$.
\end{definition}

It follows from Łojasiewicz's inequality (Theorem \ref{prop:inequality}, and also \cite[Remark 6.5]{BM88}) that the foregoing definition is independent of local coordinate charts. Besides, only the behavior of $f$ around the boundary $\partial V$ matters. Its relation to $p$-bounded growth is explicated as follows.

\begin{proposition}\label{prop:polynomial-to-p}
	Let $V$ be a subanalytic open subset of a compact real analytic manifold $M$, and $p: M \to \R_{\geq 0}$ be a continuous subanalytic function. Suppose that $V \subset \left\{x \in M: p(x) > 0 \right\}$. If a continuous function $f: V \to \CC$ has polynomial growth relative to $M$, then $f$ has $p$-bounded growth.
\end{proposition}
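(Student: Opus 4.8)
The plan is to trade the intrinsic weight $p$ for the chart-dependent distance functions $d(\cdot,K\smallsetminus V)$ that govern polynomial growth in Definition \ref{def:growth}, compare the two by Łojasiewicz's inequality (Theorem \ref{prop:inequality}), and use compactness of $M$ to make the argument finite. First we may replace $p$ by $\min(p,1)$: this is still continuous and subanalytic, has the same positivity locus as $p$, and $f$ has $p$-bounded growth relative to $M$ if and only if it has $\min(p,1)$-bounded growth, since on $\{p>1\}\cap V$ the function $f$ is bounded (there $\min(p,1)\equiv1$) while $p\le\max_M p<\infty$ on the compact $M$. So assume $0\le p\le1$.

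\textbf{Reduction to a finite cover.} For each $x\in M$, Definition \ref{def:growth} provides a compact neighbourhood $K_x\ni x$ contained in an analytic coordinate chart, an integer $N_x\ge1$ and $C_x>0$ with $d(y,K_x\smallsetminus V)^{N_x}|f(y)|\le C_x$ for $y\in K_x\cap V$. The interiors $\mathrm{int}(K_x)$ cover the compact $M$, so finitely many $K_1,\dots,K_r$ do, with data $N_i,C_i$. For an index with $K_i\subseteq V$ there is nothing to do: $f$, hence $p^a|f|$, is bounded on the compact $K_i$ (using $p\le1$). For the remaining indices put $\delta_i:=d(\cdot,K_i\smallsetminus V)$; since $\delta_i$ is bounded on the compact $K_i$, after replacing the $N_i$ by $N:=\max_iN_i$ and enlarging constants we obtain a uniform bound $\delta_i(y)^{N}|f(y)|\le C$ valid for every such $i$ and every $y\in K_i\cap V$.

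\textbf{Comparison and conclusion.} Fix such an $i$. On the compact $K_i$ the functions $p$ and $\delta_i$ are continuous, non-negative and subanalytic ($\delta_i$ because $K_i\smallsetminus V$ is subanalytic and the Euclidean distance to a subanalytic set is subanalytic, \S\ref{sec:subanalytic}), hence have compact graphs. The closed set $\delta_i^{-1}(0)=K_i\smallsetminus V$ is contained in $M\smallsetminus V$, on which $p$ vanishes (the positivity locus of $p$ being $V$), so $\delta_i^{-1}(0)\subseteq p^{-1}(0)$ and Theorem \ref{prop:inequality} yields $b_i,A_i>0$ with $p\le A_i\delta_i^{\,b_i}$ on $K_i$, hence $p^{\,N/b_i}\le A_i^{\,N/b_i}\delta_i^{\,N}$ there. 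Set $a:=\max_iN/b_i$ and $A:=\max_iA_i^{\,N/b_i}$. For $y\in K_i\cap V$, using $0\le p\le1$ and $a\ge N/b_i$,
\[
 p(y)^{a}|f(y)|=p(y)^{\,a-N/b_i}\,p(y)^{\,N/b_i}|f(y)|\le A_i^{\,N/b_i}\delta_i(y)^{N}|f(y)|\le AC .
\]
Since $\bigcup_iK_i\supseteq\bigcup_i\mathrm{int}(K_i)=M\supseteq V$, the sets $K_i\cap V$ cover $V$; combining this bound with the trivial bound on the indices having $K_i\subseteq V$ shows $p^a|f|$ is bounded on $V$, i.e.\ $f$ has $p$-bounded growth relative to $M$.

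\textbf{Main obstacle.} The one substantive step is the comparison: polynomial growth only bounds $f$ against the extrinsic distances $\delta_i$, and the Łojasiewicz comparison $p\preccurlyeq\delta_i$ is legitimate precisely because $p$ vanishes on $M\smallsetminus V$ (so that $\delta_i^{-1}(0)\subseteq p^{-1}(0)$ on each chart); this is where the hypothesis on the zero locus of $p$ is used. The remaining ingredients — extracting a finite subcover, uniformizing the exponent, and manipulating powers — are routine.
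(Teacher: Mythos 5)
Your argument is logically sound, but it quietly replaces the stated hypothesis with a different (and in fact necessary) one, and this deserves to be flagged. The proposition says $V\subset\{p>0\}$; in the comparison step you invoke the opposite containment, writing that $p$ vanishes on $M\smallsetminus V$ \emph{``(the positivity locus of $p$ being $V$)''}, i.e.\ $\{p>0\}\subset V$. That is exactly what lets you conclude $\delta_i^{-1}(0)=K_i\smallsetminus V\subset p^{-1}(0)$ and hence $p\preccurlyeq\delta_i$ via Theorem \ref{prop:inequality}, and it is not a consequence of $V\subset\{p>0\}$. Under the stated hypothesis alone the conclusion actually fails: take $p\equiv 1$ (so $V\subset\{p>0\}=M$ trivially) and any $V$ on which some continuous $f$ is unbounded but of polynomial growth; then $p^{a}|f|=|f|$ is unbounded for every $a$. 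So the statement needs a hypothesis controlling $p$ near $\partial V$, and you have (perhaps inadvertently) supplied one.

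For comparison, the paper's own proof records the implication chain $p(y)=0\implies y\notin V\implies d_i(y,K_i\smallsetminus V)=0$, i.e.\ $p^{-1}(0)\subset d_i^{-1}(0)$, which by Łojasiewicz only gives $d_i\preccurlyeq p$ --- the inequality $p^{r_i}\ge c_i\,d_i^{N_i}$ it then writes down. This is the unhelpful direction: to pass from $d_i^{N_i}|f|\le C_i$ to a bound on $p^{r_i}|f|$ one needs $p^{r_i}\le c\,d_i^{N_i}$, i.e.\ precisely your $p\preccurlyeq\delta_i$. So your version of the comparison is the one that actually closes the argument. Two further remarks. First, for the application in Theorem \ref{prop:growth-estimate} the relevant $p$ (adapted to $U\hookrightarrow X$) has $\{p>0\}=U(\R)$, which contains $V$ but is generally strictly larger; what one does have there is $\partial V\subset p^{-1}(0)$, and to exploit this weaker hypothesis one should run Łojasiewicz on the compact subanalytic set $\overline{K_i\cap V}$ rather than on all of $K_i$, since $\delta_i^{-1}(0)\cap\overline{K_i\cap V}=\partial_{K_i}(K_i\cap V)\subset\partial V$. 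Second, once the hypothesis is corrected your bookkeeping (finite subcover, uniformization of the exponent via $N=\max_i N_i$, and the final power manipulation using $0\le p\le 1$) is all correct, and the preliminary reduction to $p\le 1$ is fine since $p\sim\min(p,1)$ on the compact $M$.
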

\begin{proof}
	Cover $\partial V$ by finitely many compact neighborhoods $K_1, \ldots, K_m$ as above in Definition \ref{def:growth}; for each $i$ we have chosen an analytic coordinate chart $K_i \hookrightarrow \R^n$ where $n = \dim M$, with the corresponding distance function $d_i$ and the exponent $N_i$ in \eqref{eqn:tempered}; we may also assume $d_i(y, K_i \smallsetminus V) \leq 1$ and $p(y) \leq 1$ for all $y \in K_i \cap V$.
	
	Since $p(y) = 0 \implies y \notin V \implies d_i(y, K_i \smallsetminus V) = 0$ for all $y \in K_i$, Theorem \ref{prop:inequality} (see also \cite[Remark 6.5]{BM88}) then implies $p(y)^{r_i} \geq c_i d_i(y, K_i)^{N_i}$ for some constants $c_i, r_i > 0$, for all $1 \leq i \leq m$ and $y \in K_i \cap V$. Taking $r := \max\{r_1, \ldots, r_m\} $, we see $p^r |f|$ is bounded on $V$.
\end{proof}

\begin{corollary}
	Let $X$ be a smooth $\R$-variety and $V$ be a connected component of $X(\R)$. Every continuous function $f: V \to \CC$ of polynomial growth automatically has moderate growth at infinity.
\end{corollary}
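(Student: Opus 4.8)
The plan is to reduce the statement directly to Proposition \ref{prop:polynomial-to-p} after passing to a compactification as in Definition--Proposition \ref{def:moderate}. First I would fix an open immersion $j : X \hookrightarrow \overline{X}$ with Zariski-dense image such that $\overline{X}$ is smooth and $\overline{X}(\R)$ is compact, together with a continuous subanalytic function $p : \overline{X}(\R) \to \R_{\geq 0}$ adapted to $j$, so that $j(X)(\R) = \{ p > 0 \}$. Using $j$ to regard $X(\R)$ as the open subanalytic subset $j(X)(\R)$ of the compact real analytic manifold $\overline{X}(\R)$, its connected components are finitely many, open, and subanalytic by the basic facts recalled in \S\ref{sec:subanalytic}; hence $V$ becomes an open subanalytic subset of $\overline{X}(\R)$, and $V \subset j(X)(\R) = \{ p > 0 \}$. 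Here ``$f$ of polynomial growth'' is understood in the sense of Definition \ref{def:growth} relative to $\overline{X}(\R)$.

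Next I would note that this hypothesis is a genuine condition only along $\partial X(\R) := \overline{X}(\R) \smallsetminus X(\R)$. Indeed, since $V$ is clopen in $X(\R)$ one has $\overline{V} \cap X(\R) = V$; thus for a point $x \in V$ the estimate \eqref{eqn:tempered} is vacuous upon shrinking the neighbourhood $K$ inside $V$, and for $x \notin \overline{V}$ it is vacuous upon choosing $K$ disjoint from $V$. So the polynomial-growth hypothesis is exactly the input called for by Proposition \ref{prop:polynomial-to-p}.

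Finally, Proposition \ref{prop:polynomial-to-p} applies with $M := \overline{X}(\R)$, the open subanalytic subset $V$, the continuous subanalytic function $p$, and the continuous function $f : V \to \CC$; it produces a constant $a > 0$ with $p^a |f|$ bounded on $V$, i.e.\ $f$ has $p$-bounded growth relative to $\overline{X}(\R)$. By the definition of moderate growth at infinity (Definition--Proposition \ref{def:moderate}) this is precisely the assertion, and the independence of the choices of $j$ and $p$ is already built into that definition.

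There is no real obstacle here beyond unwinding definitions: the only points worth checking carefully are the subanalyticity of the connected component $V$ inside $\overline{X}(\R)$ and the inclusion $V \subset \{ p > 0 \}$, both immediate from \S\ref{sec:subanalytic} and Definition--Proposition \ref{def:moderate}. The analytic substance is entirely absorbed into Proposition \ref{prop:polynomial-to-p}, hence ultimately into Łojasiewicz's inequality.
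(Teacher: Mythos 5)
Your proof is correct and is precisely the argument the paper leaves implicit: choose a compactification $j : X \hookrightarrow \overline{X}$ with adapted $p$, observe that the connected component $V$ is an open subanalytic subset of $\overline{X}(\R)$ contained in $\{p > 0\}$, and invoke Proposition \ref{prop:polynomial-to-p} together with the well-definedness built into Definition--Proposition \ref{def:moderate}. The paper states the corollary without proof because the reduction is immediate, and your unwinding of the definitions matches that intent.
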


Suppose $U$ is a smooth $\R$-variety, and let $\mathcal{M}$ be a $\mathscr{D}_{U_{\CC}}$-module generated by some global section $\mu$. Let $V$ be an open subset of $U(\R)$ and $u: V \to \CC$ be an analytic function. Therefore $u$ extends holomorphically to an open subset $\mathcal{V} \subset U^{\ana}$ containing $V$. We say $u$ is an \emph{analytic solution} to $\mathcal{M}$, if $\mu \mapsto u$ induces a homomorphism of $\mathscr{D}_{\mathcal{V}}$-modules for some $\mathcal{V}$ as above; here we also employ the language of analytic $\mathscr{D}$-modules on complex manifolds.

\begin{theorem}\label{prop:growth-estimate}
	Let $U$ be a smooth $\R$-variety and $\mathcal{M}$ be a regular holonomic $\mathscr{D}_{U_{\CC}}$-module generated by some $\mu \in \Gamma(U, \mathcal{M})$. Let $V$ be a connected component of $U(\R)$. Then every analytic solution $u: V \to \CC$ to $\mathcal{M}$ has moderate growth at infinity in the sense of Definition--Proposition \ref{def:moderate}.
\end{theorem}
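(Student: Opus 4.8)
The plan is to reduce the statement to Deligne's classical estimates on solutions of regular connections near a normal crossings divisor, and then to convert the resulting bound into moderate growth at infinity by feeding it through the subanalytic formalism of \S\ref{sec:subanalytic}--\S\ref{sec:growth}.

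\emph{Choosing a model and reducing to polynomial growth.} By the independence clause of Definition--Proposition~\ref{def:moderate} I am free to pick a convenient compactification: using Nagata's theorem and Hironaka's resolution, fix an open immersion $j : U \hookrightarrow \overline{U}$ with dense image, $\overline{U}$ smooth, $\overline{U}(\R)$ compact and $\partial U := \overline{U} \smallsetminus U$ a simple normal crossings divisor. Since $\mathcal{M}$ is holonomic there is a Zariski-open dense $U_0 \subseteq U$ on which $\mathcal{M}$ restricts to an integrable connection (\cite[Proposition~3.1.6]{HTT08}); after a further blow-up I may also assume that $\overline{U} \smallsetminus U_0$ is a simple normal crossings divisor, which then contains $\partial U$. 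Regular holonomicity of $\mathcal{M}$ forces $\mathcal{M}|_{U_0}$ to be a \emph{regular} integrable connection, and near the real locus $u$ is a holomorphic solution of $\mathcal{M}^{\ana}|_{U_0^{\ana}}$. By the Corollary following Proposition~\ref{prop:polynomial-to-p} it now suffices to show that $u$, extended by zero to $U(\R)$, has polynomial growth relative to the compact manifold $\overline{U}(\R)$ in the sense of Definition~\ref{def:growth}. At a point of $U(\R)$ this is immediate: $V$ is a connected component of the manifold $U(\R)$, hence open in $\overline{U}(\R)$, and $u$ is analytic on $V$, so its extension by zero is locally bounded there and \eqref{eqn:tempered} holds with $N = 1$. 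Everything therefore rests on the points of $\partial U(\R)$.

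\emph{The boundary estimate.} Fix $x_0 \in \partial U(\R)$ and choose real-analytic coordinates $(z_1, \dots, z_n)$ on $\overline{U}(\R)$ at $x_0$ with $\partial U = \{ z_1 \cdots z_k = 0 \}$. The real punctured neighbourhood $\{ |z| < \varepsilon,\ z_1 \cdots z_k \neq 0 \}$ breaks into finitely many ``quadrants'', each contained in a simply connected open subset of $\overline{U}^{\ana}_{\CC}$ that misses the complexified divisor and the proper closed subset $U \smallsetminus U_0$; on each such piece $u$ is a holomorphic solution of the regular connection $\mathcal{M}^{\ana}|_{U_0^{\ana}}$, so Deligne's estimates for regular connections along a normal crossings divisor (\cite{Del70}; pass to the Deligne lattice, where the connection acquires logarithmic poles, so that solutions are constant linear combinations of the columns of a fundamental matrix built from matrix powers $z_j^{-R_j}$) give $|u| = O\bigl( |z_1 \cdots z_k|^{-N_0} \bigr)$ for some $N_0$. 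Since $U_0(\R)$ is dense in $U(\R)$ --- its complement being a proper closed subvariety, nowhere dense by the Zariski-density of real points --- and $d(\cdot, \partial U(\R))^{N_0} |u|$ is continuous on $V$ near $x_0$, this bound propagates from $V \cap U_0(\R)$ to all of $V$ near $x_0$. Finally, on a small compact neighbourhood $K \ni x_0$ the continuous subanalytic functions $d(\cdot, \partial U(\R))$ and $d(\cdot, K \smallsetminus V)$ --- here $V$, being a connected component of a subanalytic set, is subanalytic --- satisfy $\{ d(\cdot, \partial U(\R)) = 0 \} \cap K = \partial U(\R) \cap K \subseteq K \smallsetminus V \subseteq \{ d(\cdot, K \smallsetminus V) = 0 \}$, so Łojasiewicz's inequality (Theorem~\ref{prop:inequality}) gives $d(\cdot, K \smallsetminus V) \preccurlyeq d(\cdot, \partial U(\R))$ on $K$. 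Combining this with the boundedness of $d(\cdot, \partial U(\R))$ on $\overline{U}(\R)$, one obtains $\sup_{y \in K \cap V} d(y, K \smallsetminus V)^N |u(y)| < \infty$ for $N$ large, which is \eqref{eqn:tempered} at $x_0$.

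Putting the two cases together, $u$ has polynomial growth relative to $\overline{U}(\R)$, hence moderate growth at infinity, and this is independent of $j$ by Definition--Proposition~\ref{def:moderate}. I expect the boundary estimate to be the only genuine obstacle: one must invoke Deligne's moderate-growth theorem in a form that applies to a \emph{single} holomorphic solution defined only on a complex neighbourhood of the real locus --- rather than on a full punctured polydisc --- and then translate ``moderate growth along the complex normal crossings divisor'' into ``polynomial growth in the real Euclidean distance to $\partial U(\R)$'', which is exactly what the subanalytic vocabulary and Łojasiewicz's inequality set up in \S\ref{sec:subanalytic}--\S\ref{sec:growth} are meant to handle. (Alternatively, with $p$ adapted to $j$, one can skip Proposition~\ref{prop:polynomial-to-p} and bound $p^a |u|$ on $V$ directly, using $p \sim d(\cdot, \partial U(\R))$ and Deligne's estimate.)
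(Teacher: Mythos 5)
Your overall strategy matches the paper's: compactify to make the boundary a normal crossings divisor, apply Deligne's growth estimate for solutions of regular holonomic systems near that divisor, and convert to moderate growth at infinity via Łojasiewicz and Proposition~\ref{prop:polynomial-to-p}. (The paper prefers de Jong's alterations to keep $X$ proper, and handles the reduction via a general pullback lemma; your direct Hironaka reduction is an acceptable variant.) However, there is a genuine gap in your boundary estimate, and it is precisely the point the paper takes care to address.

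At $x_0 \in \partial U(\R)$, the solution $u$ is a solution of the connection $\mathcal{M}|_{U_0}$, whose singular locus is the \emph{full} divisor $\overline{U} \smallsetminus U_0 = \{z_1 \cdots z_m = 0\}$, which may have components $\{z_{k+1}=0\},\ldots,\{z_m=0\}$ through $x_0$ in addition to $\partial U = \{z_1\cdots z_k = 0\}$. Deligne's estimate on a quadrant away from the singular locus therefore gives, a priori, only $|u| = O\bigl(|z_1\cdots z_m|^{-N_0}\bigr)$; it does \emph{not} immediately give the bound $O\bigl(|z_1\cdots z_k|^{-N_0}\bigr)$ that you write. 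The distinction is fatal for your argument, because the naive bound blows up along $\{z_{k+1}\cdots z_m = 0\}\cap U(\R)$ — a set that lies \emph{inside} $U(\R)$ and meets every real neighbourhood of $x_0$ — so neither your ``quadrant misses $U\smallsetminus U_0$'' claim (a quadrant in the $\partial U$-coordinates necessarily meets the extra components) nor the ``propagate by continuity from $V\cap U_0(\R)$'' step can recover a bound that is purely in terms of $d(\cdot,\partial U(\R))$. What is missing is the observation that $u$ is analytic, hence single-valued and bounded, across each extra component $\{z_i = 0\}$ ($k<i\leq m$) inside $V$; therefore the local monodromy of $u$ around those components is trivial, and in the Nilsson-class expansion \eqref{eqn:Nilsson} only trivial exponents and no $\log z_i$ occur in those variables. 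Only after this monodromy argument does the bound collapse to $O\bigl(|z_1\cdots z_k|^{-N_0}\bigr)$, from which your Łojasiewicz step then correctly finishes. This is exactly the content of the paragraph in the paper's proof around \eqref{eqn:monodromy}, and your final paragraph's worry about ``a single holomorphic solution on a complex neighbourhood of the real locus'' gestures at a related subtlety without isolating this one.
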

\begin{proof}
	Since $\mathcal{M}$ is holonomic, there exists an open $U_0 \subset U$ such that $\mathcal{M}$ is an integrable connection on $U_0$. Our aim is to show that $u$ is of $p$-bounded growth relative to $X(\R)$, for any data $(X, U, U_0, V, \mathcal{M}, u, p)$ where
	\begin{compactitem}
		\item $X$ is a smooth proper $\R$-variety, together with an open dense immersion $U \hookrightarrow X$;
		\item $U_0 \subset U$ is open dense;
		\item $V$ is a connected component of $U(\R)$;
		\item $\mathcal{M}$ is a regular holonomic $\mathscr{D}_U$-module, generated by some global section $\mu$ and $\mathcal{M}|_{U_0}$ is an integrable connection;
		\item $u$ is an analytic solution to $\mathcal{M}$ on $V$;
		\item $p: X(\R) \to \R_{\geq 0}$ is adapted to $U \hookrightarrow X$ in the sense of Definition--Proposition \ref{def:moderate}.
	\end{compactitem}
	Here we require $X$ to be a proper $\R$-scheme, which is stronger than the compactness of $X(\R)$.
	
	Consider a proper surjective morphism $\pi: X' \to X$ between $\R$-varieties. Set
	\[ U' := \alpha^{-1}(U), \quad U'_0 := \alpha^{-1}(U_0), \quad u' := u \circ \pi, \quad p' := p \circ \pi. \]
	Let $\mathcal{M}'$ be the $\mathscr{D}_{U'}$-module $\Lder^0 \pi^* \mathcal{M}$. It is still regular holonomic, generated by $\mu' := 1 \otimes \mu$, and is an integrable connection on $U'_0$; then $u'$ is an analytic solution to $\mathcal{M'}|_{U'}$. To estimate $u'$, we restrict it to a connected component $V'$ of $\alpha^{-1}(V)$. Lemma \ref{prop:p-growth-pullback} and Definition--Proposition \ref{def:moderate} entail that the case for $(X', U', U'_0, V', \mathcal{M}', u', p')$, for various connected components $V'$, will imply the case for $(X, U, U_0, V, \mathcal{M}, u, p)$. Some preliminary reductions are in order.
	
	\begin{enumerate}
		\item First, we reduce to the case where $X \smallsetminus U_0$ and its closed subset $X \smallsetminus U$ are both divisors. This is easily achieved by blowing up.
		\item Next, we take $\pi: X' \to X$ so that $\pi^{-1}(X \smallsetminus U_0)$ is a divisor with normal crossings. This can be done by Hironaka's theorem, but de Jong's alteration \cite[Theorem 4.1]{dJ96} suffices for our purpose as $X$ is proper. Then $\pi^{-1}(X \smallsetminus U)$ is also a divisor, as any preimage of a divisor does. 
	\end{enumerate}
	
	Now study the behavior of $u$ around some $x \in \partial V$ in $X(\R)$. Let $\mathcal{D}$ denote the unit open disc in $\CC$. We may choose local coordinates $z_1, \ldots, z_n$ on an open neighborhood $O \ni x$ in $X(\CC)$, such that
	\begin{gather*}
	(z_1, \ldots, z_n): O \rightiso \mathcal{D}^n, \quad x \mapsto (0, \ldots, 0) \\
	O \cap (X \smallsetminus U_0) = \left\{ z_1 \cdots z_n = 0 \right\}, \quad O \cap (X \smallsetminus U) = \left\{ z_1 \cdots z_a = 0 \right\},
	\end{gather*}
	for some $0 \leq a \leq n$. Therefore $O \cap U = \{ z_1 \cdots z_a \neq 0 \}$ and $O \cap V$ is a union of connected components of $O \cap U$.
	
	The section $u|_{O \cap V \cap U_0(\R)}$ of the local system associated with $\mathcal{M}|_{U_0}$ extends to a multi-valued section on $O \cap (X \smallsetminus U_0)(\CC)$, i.e.\ a section on the universal covering. It is a well-known virtue of regular holonomic systems (see eg.\ \cite[III.1]{Del70}, \cite[IX.2.2]{Ph11}) that the analytically continued $u$ can be expressed as a finite sum
	\begin{equation}\label{eqn:Nilsson}
		u = \sum_{\mathbf{s}, \mathbf{m}} \Phi_{\mathbf{s}, \mathbf{m}}(z) z^{\mathbf{s}} \log^{\mathbf{m}}(z),
	\end{equation}
	with
	\begin{gather*}
		\mathbf{s} = (s_1, \ldots, s_n) \in \CC^n, \quad \mathbf{m} = (m_1, \ldots, m_n) \in \Z_{\geq 0}^n, \\
		z^{\mathbf{s}} := \prod_{i=1}^n z_i^{s_i}, \quad \log^{\mathbf{m}}(z) := \prod_{i=1}^n (\log z_i)^{m_i}, \\
	\Phi_{\mathbf{s}, \mathbf{m}}: \;\text{holomorphic functions on } \mathcal{D}^n,
	\end{gather*}
	where we take the usual branches of $\log$ and $z^s$. To ensure uniqueness, we may assume $\mathbf{s}$ ranges over representatives of $\CC^n/\Z^n$ (see the Remark after the cited result in \cite{Ph11}). The standard generators $g_1, \ldots, g_n$ of $\pi_1(O \cap (X \smallsetminus U_0)(\CC), x) \simeq \Z^n$ act as
	\begin{equation}\label{eqn:monodromy}
		z_i^{s_i} \xmapsto{g_i} \exp\left(2\pi \sqrt{-1} s_i \right) z_i^{s_i}, \quad \log z_i \xmapsto{g_i} \log z_i + 2\pi \sqrt{-1}, \quad i = 1, \ldots, n.
	\end{equation}
	
	The $g_i$-action on $u$ for $a < i \leq n$ is realized by analytic continuation along the loop $z_i = \epsilon \exp(2\pi \sqrt{-1}\theta)$ where $0 < \epsilon \ll 1$ and $\theta \in [0, 2\pi]$; the other coordinates $z_j$ are nonzero constants. But $u$ is analytic on $V$, hence holomorphic in some open neighborhood of $(z_1, \ldots, \underbracket{0}_{i}, \ldots, z_n) \in O \cap V$ inside $X(\CC)$. The monodromic action $g_i$ is thus trivial on $u$ when $a < i \leq n$.
	
	By comparison with \eqref{eqn:monodromy}, we conclude that \eqref{eqn:Nilsson} involves only terms with
	\[ \mathbf{s} = (s_1, \ldots, s_a, 0, \ldots, 0), \quad \mathbf{m} = (m_1, \ldots, m_a, 0, \ldots, 0). \]
	Therefore $u|_{V \cap O}$ has polynomial growth relative to $X(\R)$ by \eqref{eqn:Nilsson}; multi-valuedness is not an issue since $V \cap O$ has contractible connected components. Apply Proposition \ref{prop:polynomial-to-p} to deduce $p$-bounded growth.
\end{proof}

\begin{remark}
	The case $U = U_0$ of Theorem \ref{prop:growth-estimate} (see the proof) is recorded in \cite[Théorème II.4.1]{Del70}.
\end{remark}

\section{Applications to admissible distributions}\label{sec:app}
Throughout this section, the connected reductive group $G$, its subgroups and homogeneous spaces are all defined over $\R$, but the $\mathscr{D}$-modules will live over $\CC$. It is thus convenient to adopt the classical viewpoint that the groups and varieties are over $\CC$, but also carry $\R$-structures. In particular, the $\mathscr{D}$-modules in question live on $\CC$-varieties. We write $\mathscr{D}_{Z^{\ana}}$ for the sheaf of analytic differential operators on $Z^{\ana}$, the $\CC$-analytic variety associated with $Z$.

For any smooth variety $Z$ defined over $\R$, we view $Z(\R)$ as a real analytic manifold. For a $D_Z$-module $M$, we denote the $\mathscr{D}_Z$-module it generates as $\mathcal{M}$ as usual. Hereafter, $Z$ will be a homogeneous $G$-variety and $K \subset G$ will be a subgroup.

\begin{definition}\label{def:admissible-distribution}
	Let $u$ be distribution, or more generally a hyperfunction on $Z(\R)$ in Sato's sense. We say $u$ is $K$-admissible (resp.\ $\mathfrak{k}$-admissible) if there exist
	\begin{compactitem}
		\item a $K$-admissible (resp.\ $\mathfrak{k}$-admissible) $D_Z$-module $M$,
		\item a subquotient $\mathcal{N}$ of $\mathcal{M}$ in $\mathscr{D}_Z \dcate{Mod}$ and an isomorphism $\mathscr{D}_Z \cdot u \simeq \mathcal{N}$.
	\end{compactitem}
\end{definition}

When $Z$ is spherical and $K$ is a spherical subgroup, Corollary \ref{prop:admissible-regular} implies that every $K$-admissible hyperfunction generates a regular holonomic $\mathscr{D}_Z$-module, and Corollary \ref{prop:holonomic} implies that every $\mathfrak{k}$-admissible hyperfunction generates a holonomic $\mathscr{D}_Z$-module.

We shall study the $K$-admissible hyperfunctions through the solution complexes of $\mathscr{D}_Z$-modules. Following \cite[XI]{KS90}, let
\[ \mathscr{B}_{Z(\R)} \supset \mathfrak{Db}_{Z(\R)} \supset \mathscr{C}^\infty_{Z(\R)} \supset \mathscr{A}_{Z(\R)} \]
denote the sheaves of hyperfunctions, distributions, $C^\infty$-functions, and analytic functions on $Z(\R)$, respectively. Extending by zero, they are also viewed as sheaves on $Z^{\ana}$; in fact they are $\mathscr{D}_{Z^{\ana}}$-modules. Note that
\[ \mathscr{A}_{Z(\R)} = \mathscr{O}_{Z^{\ana}}|_{Z(\R)}, \quad  \mathscr{B}_{Z(\R)} = \mathrm{R}^{\dim Z}\Gamma_{Z(\R)}\left(\mathscr{O}_{Z^{\ana}}\right) \otimes \mathrm{or}_{Z(\R)}, \]
where $\mathrm{or}_{Z(\R)}$ is the orientation sheaf.

Hereafter, assume $Z$ is a spherical homogeneous $G$-variety and $K \subset G$ is a spherical subgroup. For every regular holonomic $\mathscr{D}_Z$-module $\mathcal{M}$, we obtain from \cite[Corollary 8.3 and 8.5]{Ka84} the quasi-isomorphisms
\begin{gather*}
	\iHom_{\mathscr{D}_{Z^{\ana}}} \left(\mathcal{M}^{\ana}, \mathfrak{Db}_{Z(\R)}\right) \rightiso \iHom_{\mathscr{D}_{Z^{\ana}}} \left(\mathcal{M}^{\ana}, \mathscr{B}_{Z(\R)}\right), \\
	\iHom_{\mathscr{D}_{Z^{\ana}}} \left(\mathcal{M}^{\ana}, \mathscr{A}_{Z(\R)}\right) \rightiso \iHom_{\mathscr{D}_{Z^{\ana}}} \left(\mathcal{M}^{\ana}, \mathscr{C}^\infty_{Z(\R)}\right).
\end{gather*}
These $\iHom_{\mathscr{D}_{Z^{\ana}}}(\mathcal{M}^{\ana}, \cdot)$ are the \emph{solution complexes} of $\mathcal{M}$ valued in various function spaces.

\begin{theorem}\label{prop:hyperfcn-dist}
	Assume $Z$ is spherical homogeneous and $K \subset G$ is a spherical subgroup. Every $K$-admissible hyperfunction on $Z(\R)$ is a distribution, and every $K$-admissible $C^\infty$-function on $Z(\R)$ is analytic.
\end{theorem}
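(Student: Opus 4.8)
The plan is to set up a reduction that makes both statements formal consequences of the two Kashiwara quasi-isomorphisms recalled just above. Let $u$ be a $K$-admissible hyperfunction on $Z(\R)$. By Definition \ref{def:admissible-distribution} there is an isomorphism $\mathscr{D}_Z \cdot u \simeq \mathcal{N}$, with $\mathcal{N}$ a subquotient of the regular holonomic $\mathscr{D}_Z$-module generated by some $K$-admissible $D_Z$-module; as observed right after Definition \ref{def:admissible-distribution}, this makes $\mathcal{N}$, and hence $\mathscr{D}_Z \cdot u$, regular holonomic. Since the cyclic generator of $\mathscr{D}_Z \cdot u$ is $u$ itself, it corresponds under the isomorphism to a generator $\mu$ of $\mathcal{N}$, and the assignment $D \mapsto D u$ furnishes a $\mathscr{D}_{Z^{\ana}}$-linear map $\varphi\colon \mathcal{N}^{\ana} \to \mathscr{B}_{Z(\R)}$ with $\varphi(\mu) = u$; in other words $\varphi \in \Hom_{\mathscr{D}_{Z^{\ana}}}(\mathcal{N}^{\ana}, \mathscr{B}_{Z(\R)})$, and its value on $\mu$ recovers $u$.

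Next I would invoke the quasi-isomorphism $\iHom_{\mathscr{D}_{Z^{\ana}}}(\mathcal{N}^{\ana}, \mathfrak{Db}_{Z(\R)}) \rightiso \iHom_{\mathscr{D}_{Z^{\ana}}}(\mathcal{N}^{\ana}, \mathscr{B}_{Z(\R)})$, which holds because $\mathcal{N}$ is regular holonomic and is induced by the inclusion $\iota\colon \mathfrak{Db}_{Z(\R)} \hookrightarrow \mathscr{B}_{Z(\R)}$. Passing to $0$-th cohomology sheaves and then to global sections turns it into a bijection $\Hom_{\mathscr{D}_{Z^{\ana}}}(\mathcal{N}^{\ana}, \mathfrak{Db}_{Z(\R)}) \rightiso \Hom_{\mathscr{D}_{Z^{\ana}}}(\mathcal{N}^{\ana}, \mathscr{B}_{Z(\R)})$, still induced by $\iota$. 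Hence $\varphi = \iota \circ \psi$ for some $\psi\colon \mathcal{N}^{\ana} \to \mathfrak{Db}_{Z(\R)}$; then $\psi(\mu)$ is a distribution whose image in $\mathscr{B}_{Z(\R)}$ equals $\varphi(\mu) = u$, so $u$ (a priori only a hyperfunction) is a distribution. For the second assertion I would start from a $K$-admissible $C^\infty$-function $u$, which is in particular a $K$-admissible hyperfunction, so the above applies; as $\mathscr{C}^\infty_{Z(\R)}$ is a $\mathscr{D}_{Z^{\ana}}$-submodule of $\mathscr{B}_{Z(\R)}$ containing $u$, the map $\varphi$ already lands in $\mathscr{C}^\infty_{Z(\R)}$, and running the same argument with the second Kashiwara quasi-isomorphism $\iHom_{\mathscr{D}_{Z^{\ana}}}(\mathcal{N}^{\ana}, \mathscr{A}_{Z(\R)}) \rightiso \iHom_{\mathscr{D}_{Z^{\ana}}}(\mathcal{N}^{\ana}, \mathscr{C}^\infty_{Z(\R)})$ shows that $\varphi$ factors through $\mathscr{A}_{Z(\R)} = \mathscr{O}_{Z^{\ana}}|_{Z(\R)}$, whence $u$ is analytic.

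I do not expect a genuine obstacle: the statement is close to being a formal corollary of Corollary \ref{prop:admissible-regular} together with Kashiwara's theorem. The points deserving a little care are the bookkeeping between the algebraic $\mathscr{D}_Z$-module $\mathscr{D}_Z \cdot u$ and its analytification acting on the analytic sheaves $\mathscr{B}_{Z(\R)}$, $\mathfrak{Db}_{Z(\R)}$, $\mathscr{C}^\infty_{Z(\R)}$, $\mathscr{A}_{Z(\R)}$, and the verification that taking $H^0$ and global sections of the Kashiwara quasi-isomorphism of solution complexes genuinely produces the bijections of $\Hom$-spaces used above.
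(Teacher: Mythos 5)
Your proposal is correct and follows the same route as the paper: identify $u$ with an element of $\Hom_{\mathscr{D}_{Z^{\ana}}}(\mathcal{N}^{\ana}, \mathscr{B}_{Z(\R)})$, note $\mathcal{N}$ is regular holonomic by Corollary \ref{prop:admissible-regular}, and apply $\mathrm{H}^0$ of the two Kashiwara quasi-isomorphisms. The extra care you take in tracking the generator $\mu$, the factorization $\varphi = \iota\circ\psi$, and the passage from the sheaf-level quasi-isomorphism to a bijection of $\Hom$-spaces is exactly the bookkeeping the paper compresses into ``it remains to take $\mathrm{H}^0$.''
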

\begin{proof}
	Consider a hyperfunction $u$ on $Z(\R)$, a $K$-admissible $D_Z$-module $M$ and a subquotient $\mathcal{N}$ of $\mathcal{M}$ such that $\mathscr{D}_Z \cdot u \simeq \mathcal{N}$. Since $\mathcal{M}$ is regular holonomic by Corollary \ref{prop:admissible-regular}, so is $\mathcal{N}$. On the other hand $u$ can be identified as an element of $\Hom_{\mathscr{D}_{Z^{\ana}}} \left(\mathcal{N}^{\ana}, \mathscr{B}_{Z(\R)}\right)$. The same holds for distributions, $C^\infty$-functions and analytic functions. It remains to take $\mathrm{H}^0$ in the quasi-isomorphisms above.
\end{proof}

In the next two examples, the group acting on homogeneous spaces is always $G^{\mathrm{op}} \times G$, and the action is written as $\gamma (a,b) = a\gamma b$.

\begin{example}[Twisted characters]\label{eg:HC-character}
	Take $\tilde{G}$ to be a twisted space under $G$ (Example \ref{eg:group-case-U} with $\Bbbk = \R$) and take $K = \{ (g^{-1}, g): g \in G \}$.  We also fix a smooth character $\omega: G(\R) \to \CC^\times$ and consider the distributions $\Theta$ on $\tilde{G}(\R)$ satisfying
	\begin{equation}\label{eqn:omega-character}
		\Theta({}^g f) = \omega(g^{-1}) \Theta(f), \quad {}^g f(\gamma) = f(g^{-1}\gamma g)
	\end{equation}
	for all $g \in G(\R)$. A typical source of such distributions on $\tilde{G}(\R)$ is the \emph{$\omega$-twisted character}. We follow \cite[I.2.6]{HL17} to define them. First, we define a smooth $\omega$-representation to be a pair $(\pi, \tilde{\pi})$ where $\pi$ is an SAF representation of $G(\R)$ (see \cite[p.46]{BK14}) with underlying Fréchet space $V_\pi$, and $\tilde{\pi}: \tilde{G}(\R) \to \Aut_{\CC}(V_\pi)$ is such that
	\begin{compactitem}
		\item $\tilde{\pi}(a\gamma b) = \pi(a) \tilde{\pi}(\gamma)\pi(b) \cdot \omega(b)$ for all $a, b \in G(\R)$ and $\gamma \in \tilde{G}(\R)$,
		\item for some $\gamma$ (equivalently, for any $\gamma$) in $\tilde{G}(\R)$, the endomorphism $\tilde{\pi}(\gamma)$ of $V_\pi$ is invertible and continuous.
	\end{compactitem}

	For every $f \in C^\infty_c(\tilde{G}(\R))$, set
	\[ \tilde{\pi}(f) := \int_{\tilde{G}(\R)} f(\gamma) \tilde{\pi}(\gamma) \dd_\ell \gamma \; \in \End_{\CC}(V_\pi) \]
	by fixing a left $G(\R)$-invariant measure $\dd_\ell \gamma$ on $\tilde{G}(\R)$. If we fix $\gamma_0 \in \tilde{G}(\R)$ and set
	\[ f_0(g) := f(g\gamma_0), \quad A := \tilde{\pi}(\gamma_0) \]
	so that $f_0 \in C^\infty_c(G(\R))$, then
	\[ \tilde{\pi}(f) = \pi(f_0) \circ A. \]
	Note that $A$ is an intertwining operator from $\omega \otimes \pi$ to $\pi \circ \Ad(\gamma_0)$. This will allow us to define the $\omega$-twisted character of $\tilde{\pi}$ as the distribution
	\[ \Theta_{\tilde{\pi}}: f \mapsto \Tr\left( \tilde{\pi}(f) \right) = \Tr\left( \pi(f_0) \circ A : V_\pi \to V_\pi \right). \]
	To be precise, one has to embed $V_\pi$ into a Hilbert globalizations of the associated Harish-Chandra module in order to talk about the trace; see \cite[\S 5.1]{BK14}.

	The distribution $\Theta_{\tilde{\pi}}$ is $K$-admissible. Indeed, it satisfies the equivariance \eqref{eqn:omega-character} under $G(\R) \simeq K(\R)$, and is clearly $\mathcal{Z}(\mathfrak{g}^{\mathrm{op}} \times \mathfrak{g})$-finite. When $\gamma_0$ can be chosen with $\Ad(\gamma_0) = \identity$, we revert to the Harish-Chandra characters.
\end{example}

\begin{example}[Relative characters]\label{eg:relative-characters}
	For $i = 1, 2$, let $H_i \subset G$ be a spherical subgroup and $\chi_i: \mathfrak{h}_i \to \CC$ be a reductive character. Let $\check{\pi}$ be the contragredient of an SAF representation $\pi$ of $G(\R)$. Consider continuous linear functionals that are equivariant under the Lie algebras $\mathfrak{h}_1$ and $\mathfrak{h}_2$:
	\[ \phi_1 \in \Hom_{\mathfrak{h}_1}(V_\pi, \chi_1), \quad \phi_2 \in \Hom_{\mathfrak{h}_2}(V_{\check{\pi}}, -\chi_2). \]

	Noting that $\pi^{\vee\vee} = \pi$, the corresponding \emph{relative character} is the distribution on $G(\R)$ (cf.\ \S\ref{sec:intro})
	\[ \Theta_{\phi_1, \phi_2}: f \mapsto \lrangle{\phi_1, \underbracket{\pi(f) \phi_2}_{\in V_\pi} }. \]
	These distributions are studied thoroughly in \cite{AGM16}, and the holonomicity has been established there; in \textit{loc.\ cit.}, $\Theta_{\phi_1, \phi_2}(f)$ is extended to all Schwartz functions $f$ on $G(\R)$.
	
	The conditions on $\phi_1, \phi_2$ and $\chi_1, \chi_2$ imply that $\Theta_{\phi_1, \phi_2}$ is an $H_1^{\mathrm{op}} \times H_2$-admissible distribution on $G(\R)$; in fact $D_G \cdot \Theta_{\phi_1, \phi_2}$ is an $H_1^{\mathrm{op}} \times H_2$-admissible $D_G$-module by Example \ref{eg:K-admissible} (ii).
	
	If the reductivity assumption on $\chi_1, \chi_2$ is dropped, $D_G \cdot \Theta_{\phi_1, \phi_2}$ is only $\mathfrak{h}_1^{\mathrm{op}} \times \mathfrak{h}_2$-admissible. Suppose for instance that $\chi_2$ is non-reductive, then $\Theta := \Theta_{\phi_1, \phi_2}$ is an irregular holonomic $D_G$-module. To see this, note that there exists an $H_1^{\mathrm{op}} \times H_2$-invariant open dense $U \subset G$ on which $\Theta$ is analytic (see below). There exists a copy of $\Ga$ in $R_u(H_2)$ on which $\chi_2$ is nontrivial. Were $D_G \cdot \Theta$ regular holonomic, so would be its pullback to any $\Ga$-orbit in $U$. However, $\Theta$ restricts to an exponential function on $\Ga$, whose $D$-module is irregular at $\infty$.
\end{example}

For the next result, we return to general $G$, $K$ and $Z$.

\begin{theorem}\label{prop:growth-estimate-admissible}
	Assume $Z$ is spherical homogeneous and $K \subset G$ is a spherical subgroup. Let $u$ be a $\mathfrak{k}$-admissible distribution on $Z(\R)$. There is a $K$-invariant open dense subset $U \subset Z$ such that $u$ is analytic on $U(\R)$. When $u$ is $K$-admissible, $P \cdot u|_{U(\R)}$ has moderate growth at infinity in the sense of Definition--Proposition \ref{def:moderate}, for all $P \in D_U$.
\end{theorem}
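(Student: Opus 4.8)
The plan is to assemble the statement from Corollary \ref{prop:holonomic}, Corollary \ref{prop:admissible-regular}, and Theorem \ref{prop:growth-estimate}. First, unwinding Definition \ref{def:admissible-distribution}, I fix a $\mathfrak{k}$-admissible $D_Z$-module $M$ with $\mathcal{M} := \mathscr{D}_Z \cdot M$, a subquotient $\mathcal{N}$ of $\mathcal{M}$, and an isomorphism $\mathscr{D}_Z \cdot u \rightiso \mathcal{N}$; let $\bar\nu \in \Gamma(Z, \mathcal{N})$ be the image of $u$, so that $\mathcal{N} = \mathscr{D}_Z \cdot \bar\nu$ and the isomorphism amounts to a $\mathscr{D}_{Z^{\ana}}$-linear map $\phi\colon \mathcal{N}^{\ana} \to \mathfrak{Db}_{Z(\R)}$ with $\phi(\bar\nu) = u$. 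Applying Corollary \ref{prop:holonomic} to $M$ together with its last clause (with $\Bbbk_0 = \R$, using $Z(\R) \neq \emptyset$), I obtain a $K$-invariant Zariski-open dense $U \subset Z$, defined over $\R$, on which $\mathcal{M}$ is an integrable connection. Then so is every subquotient of $\mathcal{M}|_U$, because a coherent $\mathscr{D}_U$-module that is $\mathscr{O}_U$-coherent is automatically $\mathscr{O}_U$-locally free; in particular $\mathcal{N}|_U$ is an integrable connection. Since a $\mathscr{D}$-linear image of an integrable connection inside $\mathfrak{Db}_{U(\R)}$ consists of analytic functions — locally, in a flat frame the equations read $\partial_i(\cdot) = 0$, whose distributional solutions are constants — restricting $\phi$ to $U^{\ana}$ shows $u|_{U(\R)}$ is analytic. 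This yields the first assertion, and the same $U$ is kept for the second.

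For the growth estimate, I now assume $u$ is $K$-admissible, so that $M$ may be chosen $K$-admissible; then $\mathcal{M}$, hence its subquotient $\mathcal{N}$, is regular holonomic by Corollary \ref{prop:admissible-regular}. Restriction along the open immersion $U \hookrightarrow Z$ preserves regular holonomicity, so $\mathcal{N}|_U$ is a regular holonomic $\mathscr{D}_U$-module which is moreover an integrable connection. Put $\nu := \bar\nu|_U \in \Gamma(U, \mathcal{N}|_U)$. For a given $P \in D_U$, I consider the cyclic $\mathscr{D}_U$-submodule $\mathcal{P} := \mathscr{D}_U \cdot (P\nu) \subseteq \mathcal{N}|_U$; being a submodule of a regular holonomic module it is again regular holonomic, and it is generated by the global section $P\nu$. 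Fixing a connected component $V$ of $U(\R)$ and a small complex neighbourhood $\mathcal{V} \subset U^{\ana}$ of $V$ on which (by the integrable-connection remark) $\phi$ takes values in $\mathscr{O}_{\mathcal{V}}$, I note that $P \cdot u|_V = P\cdot\phi(\nu) = \phi(P\nu)$ is an analytic solution of $\mathcal{P}$ in the sense preceding Theorem \ref{prop:growth-estimate}, with distinguished generator $P\nu$. Theorem \ref{prop:growth-estimate} then gives that $P \cdot u|_V$ has moderate growth at infinity, and, since $U(\R)$ has finitely many connected components, so does $P \cdot u|_{U(\R)}$, in the sense of Definition--Proposition \ref{def:moderate}, for every $P \in D_U$.

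The step I expect to require the most care is pinpointing where regularity is actually used: an integrable connection on $U$ need not be regular holonomic on its own — for instance $\mathscr{O}_{\Ga}$ with connection $\nabla = \dd/\dd t - 1$ is irregular at infinity — so it is essential that $\mathcal{N}|_U$ is obtained by restriction from the regular holonomic module $\mathcal{N}$ on the ambient $Z$, which is precisely what Corollary \ref{prop:admissible-regular} provides, and only under $K$-admissibility rather than bare $\mathfrak{k}$-admissibility. Everything else reduces to bookkeeping with the solution homomorphism $\phi$ and with the permanence of regular holonomicity under passing to submodules and to open subvarieties.
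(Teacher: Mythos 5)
Your proof is correct and follows essentially the same route as the paper's (very terse) argument: use Corollary \ref{prop:holonomic} to produce the $K$-invariant open dense $U$ on which $\mathcal{M}$ is an integrable connection, observe analyticity there, then under $K$-admissibility invoke Corollary \ref{prop:admissible-regular}, restrict to $U$, and feed the cyclic submodule $\mathscr{D}_U \cdot (Pu|_{U(\R)})$ into Theorem \ref{prop:growth-estimate}. What you add, usefully, is the justification that the paper leaves implicit: that a subquotient of an integrable connection is again one (via $\mathscr{O}$-coherence implying local freeness), that distributional solutions of an integrable connection are analytic, and that $Pu|_V = \phi(P\nu)$ is an analytic solution in the precise sense preceding Theorem \ref{prop:growth-estimate}. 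Your closing paragraph correctly isolates where regularity — as opposed to mere holonomicity — enters; the example of $\mathscr{O}_{\Ga}$ with $\nabla = \dd/\dd t - 1$ is exactly the caution the paper's own Remark \ref{rem:reductive-character} and Example \ref{eg:relative-characters} raise.
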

\begin{proof}
	Take a $\mathfrak{k}$-admissible module $\tilde{M}$ that contains $u$ in a subquotient. Corollary \ref{prop:holonomic} implies the existence of $U$.

	When $u$ is $K$-admissible, Theorem \ref{prop:admissible-regular} implies $\tilde{\mathcal{M}}$ is regular; so is its restriction to $U$, hence $\mathcal{M}_P := \mathscr{D}_U \cdot P u|_{U(\R)}$ are also regular holonomic, for any $P \in D_U$. Apply Theorem \ref{prop:growth-estimate} to $\mathcal{M}_P$, $\mu := P u|_{U(\R)}$, its analytic solution $P u|_{U(\R)}$ and to each connected component $V$ of $U(\R)$ to deduce the moderate growth at infinity.
\end{proof}

\begin{example}
	Consider the twisted character $\Theta_{\tilde{\pi}}$ (Example \ref{eg:HC-character}) for instance. As seen in Example \ref{eg:group-case-U} (with the notations therein), one can take $U := \tilde{G}_{\mathrm{reg}}$ inside $\tilde{G}$. Let us re-define $\Theta_{\tilde{\pi}}$ to be zero on $(\tilde{G} \smallsetminus U)(\R)$. We claim that Theorem \ref{prop:growth-estimate-admissible} implies that $\left| D^{\tilde{G}} \right|^a \Theta_{\tilde{\pi}}$ is locally bounded on $\tilde{G}(\R)$ for some $a > 0$. To see this, start with any smooth compactification $\tilde{G} \hookrightarrow \mathcal{G}$ and any $p: \mathcal{G}(\R) \to \R_{\geq 0}$ adapted to $U \hookrightarrow \mathcal{G}$ as in Definition--Proposition \ref{def:moderate}. For each $g \in G(\R)$, take a compact subanalytic neighborhood $E \ni g$ inside $\tilde{G}(\R)$. Since $E \cap (\mathcal{G} \smallsetminus U)(\R) = E \cap (\tilde{G} \smallsetminus U)(\R)$, we have $p(\gamma) = 0 \iff |D^{\tilde{G}}(\gamma)| = 0$ for all $\gamma \in E$. Theorem \ref{prop:inequality} implies that $p$ and $|D^{\tilde{G}}|$ are power-equivalent over $E$. Therefore $\Theta_{\tilde{\pi}} |_E$ is of $|D^{\tilde{G}}|$-bounded growth. This is considerably weaker than Harish-Chandra's result \cite[Theorem 3]{HC65} which attains $a = \frac{1}{2}$. The same estimates works for $P \cdot \Theta_{\tilde{\pi}}$ for any $P \in D_U$.
\end{example}

Let $\theta$ be a Cartan involution of $G$. When $K = G^\theta$ so that $K(\R) \subset G(\R)$ is a maximal compact subgroup. The classical technique of elliptic regularity applies. We rephrase it in the language of $\mathscr{D}$-modules as follows. It does not require sphericity of $Z$.

\begin{proposition}\label{prop:elliptic-regularity}
	Let $M$ be a $\mathfrak{k}$-admissible $D_Z$-module, then $\mathcal{M}^{\ana}$ is elliptic in the sense of \cite[Definition 11.5.5]{KS90}. The same is true for all $\mathscr{D}_Z$-submodules $\mathcal{N}$ of $\mathcal{M}$ Consequently,
	\[ \iHom_{\mathscr{D}_{Z^{\ana}}} \left(\mathcal{N}^{\ana}, \mathscr{A}_{Z(\R)}\right) \to \iHom_{\mathscr{D}_{Z^{\ana}}} \left(\mathcal{N}^{\ana}, \mathscr{B}_{Z(\R)}\right) \]
	is a quasi-isomorphism. Consequently, $\mathfrak{k}$-admissible hyperfunctions on $Z(\R)$ are analytic.
\end{proposition}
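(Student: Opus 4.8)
The plan is to verify the ellipticity hypothesis of \cite[Definition 11.5.5]{KS90} by hand, using the characteristic variety estimate of Proposition \ref{prop:Ginzburg-bound}, and then to quote the elliptic regularity theorem. Recall that $\mathcal{M}^{\ana}$ is elliptic precisely when $\mathrm{Ch}(\mathcal{M}^{\ana})$ meets the conormal bundle $T^*_{Z(\R)}Z^{\ana}$ of the real form only along the zero section. Now $\mathcal{M}$ is coherent, being finitely generated over $D_Z$, and Proposition \ref{prop:Ginzburg-bound} gives $\mathrm{Ch}(\mathcal{M}) \subseteq \bm{\mu}^{-1}(\mathcal{N}\cap\mathfrak{k}^\perp)$, where $\mathcal{N}\subset\mathfrak{g}^*$ denotes the nilpotent cone and $\bm{\mu}: T^*Z \to \mathfrak{g}^*$ the moment map. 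Any $\mathscr{D}_Z$-submodule of $\mathcal{M}$ --- and, more generally, any subquotient --- is again coherent, since $\mathscr{D}_Z$ is Noetherian on the smooth variety $Z$, and its characteristic variety is contained in $\mathrm{Ch}(\mathcal{M})$; so it is enough to prove that the analytification of $\bm{\mu}^{-1}(\mathcal{N}\cap\mathfrak{k}^\perp)$ meets $T^*_{Z(\R)}Z^{\ana}$ only in the zero section. Note that no sphericity of $Z$ is used here, only coherence.

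The heart of the matter is a linear-algebra statement, and this is where the compactness of $K(\R)$ (equivalently, the fact that $\theta$ is a Cartan involution) enters. Fix $x \in Z(\R)$ and identify $Z \simeq H\backslash G$ with $H := \Stab_G(x)$, a subgroup defined over $\R$; then $T^*_x Z_{\CC} \simeq \mathfrak{h}_{\CC}^\perp = \mathfrak{h}(\R)^\perp\otimes_{\R}\CC$, with $T^*_x Z(\R) = \mathfrak{h}(\R)^\perp$, and $\bm{\mu}$ restricts over $x$ to the inclusion $\mathfrak{h}_{\CC}^\perp \hookrightarrow \mathfrak{g}^*_{\CC}$, which is injective. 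A covector $\omega$ over $x$ lies in $T^*_{Z(\R)}Z^{\ana}$ iff it is purely imaginary in this real structure, i.e.\ $\omega = \sqrt{-1}\,\mu$ with $\mu \in \mathfrak{h}(\R)^\perp$. Suppose moreover $(x,\omega)$ lies in $\bm{\mu}^{-1}(\mathcal{N}\cap\mathfrak{k}^\perp)$: then $\bm{\mu}(x,\omega) = \sqrt{-1}\,\mu$ lies in the cone $\mathcal{N}$ and in $\mathfrak{k}_{\CC}^\perp$, so $\mu$ is a nilpotent element of $\mathfrak{g}(\R)^*$ lying in $\mathfrak{k}(\R)^\perp$. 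Using the Cartan decomposition $\mathfrak{g}(\R) = \mathfrak{k}(\R)\oplus\mathfrak{p}(\R)$ and a $\theta$-invariant, $\Ad$-invariant, nondegenerate symmetric form $B$ on $\mathfrak{g}(\R)$ that is negative definite on $\mathfrak{k}(\R)$ and positive definite on $\mathfrak{p}(\R)$ (such $B$ exists because $\theta$ is a Cartan involution), identify $\mathfrak{g}(\R)^* \simeq \mathfrak{g}(\R)$ equivariantly; then $\mathfrak{k}(\R)^\perp$ becomes $\mathfrak{p}(\R)$, so $\mu$ corresponds to a nilpotent $X \in \mathfrak{p}(\R)$. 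But $\mathrm{ad}(X)$ is self-adjoint for the positive definite inner product $(Y,Z)\mapsto -B(Y,\theta Z)$ --- a one-line check using $\theta X = -X$ and the invariance of $B$ --- and it is nilpotent, hence $\mathrm{ad}(X) = 0$; thus $X$ is central as well as nilpotent, so $X = 0$. Therefore $\mu = 0$, and by the injectivity recorded above $\omega = 0$. This shows $\mathcal{M}^{\ana}$ is elliptic, and the same bound on the characteristic variety yields ellipticity for every $\mathscr{D}_Z$-submodule and subquotient of $\mathcal{M}$.

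It remains to deduce the two corollaries, which is now formal. By the elliptic regularity theorem \cite[\S 11.5]{KS90} (compare also \cite{Ka84}), for any coherent elliptic $\mathscr{D}_{Z^{\ana}}$-module $\mathcal{N}$ the canonical morphism $\iHom_{\mathscr{D}_{Z^{\ana}}}(\mathcal{N}^{\ana},\mathscr{A}_{Z(\R)}) \to \iHom_{\mathscr{D}_{Z^{\ana}}}(\mathcal{N}^{\ana},\mathscr{B}_{Z(\R)})$ is a quasi-isomorphism; applied to the submodules $\mathcal{N}$ of $\mathcal{M}$, this is the asserted quasi-isomorphism. Finally, if $u$ is a $\mathfrak{k}$-admissible hyperfunction on $Z(\R)$, then by Definition \ref{def:admissible-distribution} the module $\mathscr{D}_Z\cdot u$ is isomorphic to a subquotient of some $\mathcal{M}$ as above, hence is elliptic; regarding $u$ as a section of $\mathrm{H}^0\iHom_{\mathscr{D}_{Z^{\ana}}}((\mathscr{D}_Z\cdot u)^{\ana},\mathscr{B}_{Z(\R)})$ and transporting it through the quasi-isomorphism exhibits $u$ as a section of $\mathrm{H}^0\iHom_{\mathscr{D}_{Z^{\ana}}}((\mathscr{D}_Z\cdot u)^{\ana},\mathscr{A}_{Z(\R)})$, i.e.\ $u$ is analytic. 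I expect the only real subtlety to be the precise description of the conormal bundle $T^*_{Z(\R)}Z^{\ana}$ --- its identification with the purely imaginary covectors, and hence the fact that $\bm{\mu}$ carries it into $\sqrt{-1}\,\mathfrak{g}(\R)^*$ --- together with the classical but essential nilpotency argument above; invoking elliptic regularity from \cite{KS90} is then routine.
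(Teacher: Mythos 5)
Your proof is correct, and the overall strategy matches the paper's: bound $\mathrm{Ch}(\mathcal{M})$ by Proposition~\ref{prop:Ginzburg-bound}, show that this bound forces $\mathrm{Ch}(\mathcal{M}) \cap T^*_{Z(\R)}Z^{\ana}$ to be the zero section, observe that the same holds for any submodule or subquotient since these are coherent with smaller characteristic variety, and then quote elliptic regularity from \cite{KS90}. The place where you diverge from the paper is the mechanism for ruling out nonzero purely imaginary covectors in $\bm{\mu}^{-1}(\mathcal{N}\cap\mathfrak{k}^\perp)$. The paper introduces a single auxiliary operator $\Delta = \Omega - 2\Omega_K$ (Casimirs for $\beta$ and $\beta|_{\mathfrak{k}}$), notes from the proof of Proposition~\ref{prop:Ginzburg-bound} that its principal symbol annihilates $\gr_F\mathcal{M}$, and then observes this symbol is a negative-definite quadratic form on $\sqrt{-1}\cdot\mathfrak{g}(\R)^*$; one does not need the full nilpotency constraint. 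You instead use the full conclusion $\mathrm{Ch}(\mathcal{M})\subset\bm{\mu}^{-1}(\mathcal{N}\cap\mathfrak{k}^\perp)$, reduce to showing that the only nilpotent element of $\mathfrak{p}(\R)$ is zero, and prove this by the self-adjointness of $\mathrm{ad}(X)$ for $X\in\mathfrak{p}(\R)$ with respect to the positive definite form $(Y,Z)\mapsto -B(Y,\theta Z)$. Both are standard incarnations of the same Cartan-involution input. Yours is perhaps more conceptual (it is exactly the classical ``no real nilpotents orthogonal to a maximal compact'' fact), while the paper's $\Delta$-argument is closer in spirit to Harish-Chandra's original Laplacian technique and is a bit lighter since it only needs the degree-two parts of $\mathcal{Z}(\mathfrak{g})$ and $U(\mathfrak{k})$. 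One small slip in wording: from $\mathrm{ad}(X)=0$ you should conclude $X$ lies in the center of $\mathfrak{g}(\R)$, and then use that a nilpotent element of a reductive Lie algebra has vanishing central component, rather than saying ``central as well as nilpotent, so zero''; the conclusion is correct but the phrasing elides a step specific to the reductive (non-semisimple) case.
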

\begin{proof}
	To show the ellipticity of $\mathcal{M}$, we have to show that
	\[ T^*_{Z(\R)} Z(\CC) \cap \mathrm{Ch}(\mathcal{M}) = T^*_{Z(\R)} Z(\R). \]
	Here $Z(\CC)$ is viewed as a real manifold, $T^* Z(\CC)$ denotes the real cotangent bundle of $Z(\CC)$, containing the conormal bundle $T^*_{Z(\R)} Z(\CC)$ to $Z(\R)$. We have $T^*(Z^{\ana}) \simeq T^* Z(\CC)$ as real analytic manifolds by forgetting complex structures. Hence the intersection above makes sense.
	
	Let $\mathfrak{g} = \mathfrak{k} \oplus \mathfrak{p}$ be the decomposition into $\pm 1$-eigenspaces of $\theta$, where all vector spaces are over $\R$. There exists a non-degenerate bilinear form $\beta: \mathfrak{g} \times \mathfrak{g} \to \R$ which is negative definite (resp.\ positive definite) on $\mathfrak{k}$ (resp.\ on $\mathfrak{p}$). Let $\Omega \in \mathcal{Z}(\mathfrak{g})$ and $\Omega_K \in \mathcal{Z}(\mathfrak{k})$ be the Casimir elements corresponding to $\beta$ and $\beta|_{\mathfrak{k}}$. Let $\Delta := \Omega - 2\Omega_K$. It is homogeneous of degree $2$ in $U(\mathfrak{g})$, and Proposition \ref{prop:Ginzburg-bound} (or its proof) gives $\mathrm{Ch}(\mathcal{M}) \subset \bm{\mu}^{-1}(\Delta = 0)$.
	
	Take a basis $X_1, \ldots, X_a$ of $\mathfrak{k}$ and $X_{a+1}, \ldots, X_b$ of $\mathfrak{p}$ under which $\beta$ becomes
	\[ \text{diag}(\underbracket{-1, \ldots, -1}_{a\;\text{terms}}, \underbracket{1, \ldots, 1}_{b-a\;\text{terms}}). \]
	Then $\Delta = \sum_{i=1}^b X_i^2$ is negative definite on $\sqrt{-1} \cdot \mathfrak{g}^* \subset \mathfrak{g} \dotimes{\R} \CC$. Therefore the image of $T^*_{Z(\R)} Z(\CC)$ under $\bm{\mu}$ lies in $(\sqrt{-1} \cdot \mathfrak{g}^*) \cap \{ \Delta = 0 \} = \{0\}$. Upon recalling the definition of $\bm{\mu}$, we deduce $T^*_{Z(\R)} Z(\CC) \cap \mathrm{Ch}(\mathcal{M}) = T^*_{Z(\R)} Z(\R)$.
	
	For any subquotient $\mathcal{N}$ of $\mathcal{M}$, we have $\mathrm{Ch}(\mathcal{N}) \subset \mathrm{Ch}(\mathcal{M})$ thus $\mathcal{N}$ is elliptic as well. The quasi-isomorphism for solution complexes for $\mathcal{N}$ follows from \cite[p.468]{KS90}. By taking $\mathcal{N}$ to be the module generated by a $\mathfrak{k}$-admissible hyperfunction $u$ on $Z(\R)$, we infer that $u$ is analytic.
\end{proof}

\section{The case of generalized matrix coefficients}\label{sec:coeff}
The conventions from \S\ref{sec:app} remain in force. Moreover, we assume:
\begin{compactitem}
	\item $G$ is a connected reductive $\R$-group,
	\item $Z$ is a spherical homogeneous $G$-variety with $Z(\R) \neq \emptyset$,
	\item $K = G^\theta$ for some Cartan involution $\theta$ of $G$.
\end{compactitem}

We may choose $x_0 \in Z(\R)$ to identify $Z \simeq H \backslash G$ where $H$ is a spherical subgroup. As symmetric subgroups are spherical \cite[Theorem 26.14]{Ti11}, $K$-admissible distributions or hyperfunctions on $Z(\R)$ generate regular holonomic $\mathscr{D}_Z$-modules (Definition \ref{def:admissible-distribution}).

Next, we fix an SAF representation $\pi$ of $G(\R)$ (see \cite{BK14}). Set
\[ \EuScript{N}_\pi := \Hom_{G(\R)}\left(\pi, C^\infty(Z(\R))\right) \]
where we take the continuous $\Hom$ of continuous $G(\R)$-representations, and $C^\infty(Z(\R))$ is topologized as in \cite[\S 4.1]{Li18}. Let $V_\pi^{K\text{-fini}}$ denote the Harish-Chandra module of $K$-finite vectors in $V_\pi$.

It is well-known that $\dim_{\CC} \EuScript{N}_\pi$ is finite as $Z$ is spherical, see \cite[Theorem E]{AGM16} and the references therein, where stronger versions are obtained. We are going to show that the finiteness is an outright consequence of regularity, thereby giving a somewhat more geometric proof of this result. First, recall the localization functor $\Loc_Z$ from Example \ref{eg:localization}.

\begin{lemma}\label{prop:formal-fcn}
	Write $\widehat{\mathscr{O}}_{Z, x_0}$ for the formal completion of $\mathscr{O}_{Z, x_0}$. For any $U(\mathfrak{g})$-module $V$, we have an isomorphism of $\CC$-vector spaces
	\begin{align*}
		\Hom_{\mathscr{D}_{Z^{\ana}, x_0}}\left( \Loc_Z(V)^{\ana}, \widehat{\mathscr{O}}_{Z, x_0} \right) & \rightiso \Hom_{\CC}\left( V/\mathfrak{h}V, \CC \right) \\
		\Phi & \mapsto \left[ v + \mathfrak{h}V \mapsto \Phi(1 \otimes v)(x_0) \right]
	\end{align*}
	where $\Phi(1 \otimes v)(x_0)$ means the evaluation at $x_0$ of the formal function $\Phi(1 \otimes v)$.
\end{lemma}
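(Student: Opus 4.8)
\medskip
\noindent\textbf{Plan of proof.}
The plan is to remove the $\mathscr{D}$-module decoration by a tensor--hom adjunction and reduce to a purely algebraic statement about $\widehat{\mathscr{O}}_{Z,x_0}$ regarded as a $\mathfrak{g}$-module, where $\mathfrak{g}$ acts through the vector fields induced by the $G$-action. Taking the stalk at $x_0$ and analytifying, $\Loc_Z(V)$ becomes $\mathscr{D}_{Z^{\ana},x_0}\otimes_{U(\mathfrak{g})}V$, and since $\widehat{\mathscr{O}}_{Z,x_0}$ is a module over $\mathscr{D}_{Z^{\ana},x_0}$ (indeed over the formal completion $\widehat{\mathscr{D}}_{Z,x_0}$), the restriction/extension-of-scalars adjunction gives a functorial isomorphism
\[ \Hom_{\mathscr{D}_{Z^{\ana},x_0}}\!\left(\Loc_Z(V)^{\ana},\,\widehat{\mathscr{O}}_{Z,x_0}\right)\;\rightiso\;\Hom_{U(\mathfrak{g})}\!\left(V,\,\widehat{\mathscr{O}}_{Z,x_0}\right),\qquad\Phi\mapsto\bigl(v\mapsto\Phi(1\otimes v)\bigr). \]
Post-composing with evaluation at $x_0$ lands in $\Hom_{\CC}(V,\CC)$, and the image annihilates $\mathfrak{h}V$ because for $\xi\in\mathfrak{h}$ the vector field $\xi$ vanishes at the $H$-fixed point $x_0$, so $\Phi(1\otimes\xi v)(x_0)=\bigl(\xi\cdot\Phi(1\otimes v)\bigr)(x_0)=0$. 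It therefore remains to show that evaluation at $x_0$ induces a bijection $\Hom_{U(\mathfrak{g})}(V,\widehat{\mathscr{O}}_{Z,x_0})\rightiso\Hom_{\CC}(V/\mathfrak{h}V,\CC)$; I note that no sphericity or reductivity hypothesis enters here.

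The clean way to conclude is to identify $\widehat{\mathscr{O}}_{Z,x_0}$, as a $\mathfrak{g}$-module, with the coinduced module $\operatorname{Coind}_{U(\mathfrak{h})}^{U(\mathfrak{g})}\CC=\Hom_{U(\mathfrak{h})}(U(\mathfrak{g}),\CC)$ in such a way that evaluation at $x_0$ becomes evaluation at $1\in U(\mathfrak{g})$, i.e.\ the counit of the adjunction; then Frobenius reciprocity yields
\[ \Hom_{U(\mathfrak{g})}\!\left(V,\,\Hom_{U(\mathfrak{h})}(U(\mathfrak{g}),\CC)\right)=\Hom_{U(\mathfrak{h})}(V,\CC)=\Hom_{\CC}(V/\mathfrak{h}V,\CC), \]
the last step because $\CC$ is the trivial $\mathfrak{h}$-module. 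To establish the identification I would fix a vector-space complement $\mathfrak{m}$ of $\mathfrak{h}$ in $\mathfrak{g}$: the orbit map $X\mapsto x_0\exp(X)$ is étale at $0\in\mathfrak{m}$ and hence identifies $\widehat{\mathscr{O}}_{Z,x_0}$ with the formal power series ring $\prod_{n\ge 0}\Sym^n(\mathfrak{m}^\vee)\cong\Hom_{\CC}(\Sym(\mathfrak{m}),\CC)$, compatibly with evaluation at the base point; one then checks that the velocity vector fields of the $G$-action go over, under the PBW decomposition $U(\mathfrak{g})\cong\Sym(\mathfrak{m})\otimes_{\CC}U(\mathfrak{h})$ of right $U(\mathfrak{h})$-modules, to the right-regular $U(\mathfrak{g})$-action on $\Hom_{U(\mathfrak{h})}(U(\mathfrak{g}),\CC)$.

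Injectivity of the bijection to be proved is easy and can be seen directly: if $\Phi(1\otimes v)(x_0)=0$ for all $v$, then $\bigl(\bar u\cdot\Phi(1\otimes v)\bigr)(x_0)=\Phi(1\otimes uv)(x_0)=0$ for every $u\in U(\mathfrak{g})$, and since $\mathfrak{g}\twoheadrightarrow T_{x_0}Z$ forces $\Sym^n\mathfrak{g}\twoheadrightarrow\Sym^n T_{x_0}Z$, the principal symbols of the operators $\bar u$ exhaust $\Sym^\bullet T_{x_0}Z$; an induction on $n$ using the perfect pairing between $\Sym^n T_{x_0}Z$ and $\mathfrak{m}_{x_0}^n/\mathfrak{m}_{x_0}^{n+1}$ (with $\mathfrak{m}_{x_0}$ the maximal ideal of $\mathscr{O}_{Z,x_0}$) then shows $\Phi(1\otimes v)\in\bigcap_n\mathfrak{m}_{x_0}^n=0$. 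The real content, and the step I expect to be the main obstacle, is surjectivity --- equivalently, the coinduction identification above, whose crux is the comparison of the geometric $\mathfrak{g}$-action on $\widehat{\mathscr{O}}_{Z,x_0}$ with the algebraic coaction, a standard but somewhat fiddly Baker--Campbell--Hausdorff / symmetrization computation. A reader willing to invoke the folklore isomorphism $\widehat{\mathscr{O}}_{G/H,\,eH}\cong\operatorname{Coind}_{U(\mathfrak{h})}^{U(\mathfrak{g})}\CC$ (untwisted, since we deal with functions and not densities) may take it for granted; everything else --- the adjunction, the vanishing on $\mathfrak{h}V$, Frobenius reciprocity, and the jet and symbol manipulations --- is routine.
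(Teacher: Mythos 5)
Your argument follows the same route as the paper's: both reduce via the extension-of-scalars adjunction to $\Hom_{U(\mathfrak{g})}\bigl(V,\widehat{\mathscr{O}}_{Z,x_0}\bigr)$, identify $\widehat{\mathscr{O}}_{Z,x_0}$ as a $\mathfrak{g}$-module with the coinduced module $\Hom_{U(\mathfrak{h})}(U(\mathfrak{g}),\CC)$ (the paper realizes it concretely as linear functionals on $U(\mathfrak{g})$ vanishing on $\mathfrak{h}U(\mathfrak{g})$), and conclude by Frobenius reciprocity, which the paper simply spells out as a pair of mutually inverse maps. The only nit is your PBW decomposition: since coinduction uses the left-multiplication action of $U(\mathfrak{h})$ on $U(\mathfrak{g})$, the relevant identification is $U(\mathfrak{g})\cong U(\mathfrak{h})\otimes_{\CC}\Sym(\mathfrak{m})$ as left $U(\mathfrak{h})$-modules, rather than $\Sym(\mathfrak{m})\otimes_{\CC}U(\mathfrak{h})$ as right ones.
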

\begin{proof}
	An element of the left hand side is the same as a $U(\mathfrak{g})$-homomorphism $V \to \widehat{\mathscr{O}}_{Z, x_0}$. Realize $\widehat{\mathscr{O}}_{G, 1}$ as the $\CC$-algebra of linear functions $U(\mathfrak{g}) \to \CC$. Observing that $Z \simeq H \backslash G$, we may identify $\widehat{\mathscr{O}}_{Z, x_0}$ with the $\CC$-subalgebra $\mathcal{F}$ of $\widehat{\mathscr{O}}_{G, 1}$ consisting of linear functions which are zero on $U(\mathfrak{h})U(\mathfrak{g})$.
	
	Note that the left $U(\mathfrak{g})$-action on $\widehat{\mathscr{O}}_{Z, x_0}$ transcribes to $(X f)(Y + U(\mathfrak{h})U(\mathfrak{g})) = f(YX + U(\mathfrak{h})U(\mathfrak{g}))$, where $X, Y \in U(\mathfrak{g})$ and $f \in \mathcal{F}$. Define:
	\[\begin{tikzcd}[row sep=small]
	\Hom_{U(\mathfrak{g})}(V, \mathcal{F}) \arrow[r, yshift=0.3em] & \Hom_{\CC}\left( V/\mathfrak{h}V, \CC \right) \arrow[l, yshift=-0.3em] \\
	\Phi \arrow[mapsto, r] & \Phi(\cdot)\left( 1 + U(\mathfrak{h}) U(\mathfrak{g}) \right) \\
	{[v \mapsto \Psi((\cdot)v) \in \mathcal{F} ]} & \Psi \arrow[mapsto, l]
	\end{tikzcd}\]
	It is routine to check that both arrows are well-defined, $\CC$-linear and and mutually inverse. The assertion follows.
\end{proof}

\begin{proposition}\label{prop:Hom-vs-Loc}
	Let $V$ be a Harish-Chandra module of $G$ and $K$. Then $\Hom_{\CC}\left( V / \mathfrak{h}V, \CC \right)$ is finite-dimensional. In fact it is isomorphic to $\Hom_{\mathscr{D}_{Z^{\ana}, x_0}}\left( \Loc_Z(V)^{\ana}, \mathscr{O}_{Z^{\ana}, x_0} \right)$.
\end{proposition}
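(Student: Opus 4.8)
The plan is to reduce the statement to the algebraic identification of Lemma \ref{prop:formal-fcn} by means of regularity, and then to extract finiteness from the constructibility of solution complexes of holonomic modules. The first step is to observe that $\Loc_Z(V)$ is regular holonomic. Since $V$ is a Harish-Chandra module, it is finitely generated over $U(\mathfrak{g})$ and locally $\mathcal{Z}(\mathfrak{g})$-finite; hence, by Example \ref{eg:K-admissible} (iii), the $D_Z$-module $D_Z \dotimes{U(\mathfrak{g})} V$ is $K$-admissible with trivial monodromic character, and the $\mathscr{D}_Z$-module it generates is $\Loc_Z(V)$. As $Z$ is a spherical homogeneous $G$-variety and $K = G^\theta$ is a symmetric, hence (by \cite[Theorem 26.14]{Ti11}) spherical, subgroup of $G$, Corollary \ref{prop:admissible-regular} shows that $\Loc_Z(V)$ is a regular holonomic $\mathscr{D}_Z$-module.

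Next I would invoke the comparison between formal and convergent solutions for regular holonomic systems: the inclusion $\mathscr{O}_{Z^{\ana}, x_0} \hookrightarrow \widehat{\mathscr{O}}_{Z, x_0}$ of germs of holomorphic functions into formal ones induces an isomorphism
\[ \Hom_{\mathscr{D}_{Z^{\ana}, x_0}}\left( \Loc_Z(V)^{\ana}, \mathscr{O}_{Z^{\ana}, x_0} \right) \rightiso \Hom_{\mathscr{D}_{Z^{\ana}, x_0}}\left( \Loc_Z(V)^{\ana}, \widehat{\mathscr{O}}_{Z, x_0} \right), \]
because every formal solution germ of a regular holonomic module at a point is automatically convergent; see e.g.\ \cite[VIII]{Bo87}, \cite[\S 7.2]{HTT08}, or the comparison theorems of Kashiwara and Mebkhout. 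Composing this isomorphism with the one furnished by Lemma \ref{prop:formal-fcn} yields the asserted isomorphism $\Hom_{\CC}(V/\mathfrak{h}V, \CC) \simeq \Hom_{\mathscr{D}_{Z^{\ana}, x_0}}(\Loc_Z(V)^{\ana}, \mathscr{O}_{Z^{\ana}, x_0})$.

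It then remains to note finite-dimensionality. Because $\Loc_Z(V)^{\ana}$ is $\mathscr{D}_{Z^{\ana}}$-coherent, the space $\Hom_{\mathscr{D}_{Z^{\ana}, x_0}}(\Loc_Z(V)^{\ana}, \mathscr{O}_{Z^{\ana}, x_0})$ is the stalk at $x_0$ of the sheaf $\iiHom_{\mathscr{D}_{Z^{\ana}}}(\Loc_Z(V)^{\ana}, \mathscr{O}_{Z^{\ana}})$, which is the degree-zero cohomology sheaf of the solution complex $\iHom_{\mathscr{D}_{Z^{\ana}}}(\Loc_Z(V)^{\ana}, \mathscr{O}_{Z^{\ana}})$. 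Since $\Loc_Z(V)$ is holonomic, Kashiwara's constructibility theorem (\cite[Theorem 4.6.6]{HTT08}; cf.\ \cite{Ka84}) ensures that this solution complex has $\CC$-constructible cohomology, so each of its stalks, in particular the one at $x_0$, is finite-dimensional over $\CC$. The step I expect to require the most care is the second one: the formal-to-convergent comparison must be applied in precisely the form above, for solution spaces at a single closed point, which is where the regularity of $\Loc_Z(V)$ --- hence the sphericity of both $Z$ and $K$ --- is genuinely used; the other steps are formal, resting only on Lemma \ref{prop:formal-fcn} and the standard holonomicity and constructibility results.
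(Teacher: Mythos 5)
Your proof is correct and follows essentially the same route as the paper: establish regular holonomicity of $\Loc_Z(V)$ via Example \ref{eg:K-admissible}~(iii) and Corollary \ref{prop:admissible-regular}, apply the formal-versus-convergent comparison theorem for regular holonomic modules (the paper cites \cite[Proposition 7.3.1]{HTT08}) to the inclusion $\mathscr{O}_{Z^{\ana},x_0}\hookrightarrow\widehat{\mathscr{O}}_{Z,x_0}$, identify the formal-solution space via Lemma \ref{prop:formal-fcn}, and conclude finite-dimensionality from Kashiwara's constructibility theorem. The only difference is expository: you spell out that the $\Hom$ at $x_0$ is the stalk of the solution complex, which the paper leaves implicit.
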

\begin{proof}
	Let $\mathcal{M} := \Loc_Z(V)$. Example \ref{eg:K-admissible} (iii) together Corollary \ref{prop:admissible-regular} imply that $\mathcal{M}$ is regular holonomic. There is a natural homomorphism $\nu_{x_0}$ from the analytic local ring $\mathscr{O}_{Z^{\ana}, x_0}$ to $\widehat{\mathscr{O}}_{Z, x_0}$. The comparison theorem \cite[Proposition 7.3.1]{HTT08} implies that $\nu_{x_0}$ induces
	\[ \Hom_{\mathscr{D}_{Z^{\ana}, x_0}}\left( \mathcal{M}^{\ana}, \mathscr{O}_{Z^{\ana}, x_0} \right) \simeq \Hom_{\mathscr{D}_{Z^{\ana}, x_0}} \left( \mathcal{M}^{\ana}, \widehat{\mathscr{O}}_{Z, x_0} \right); \]
	the left hand side is finite-dimensional over $\CC$ by Kashiwara's constructibility theorem \cite[\S 4.6]{HTT08}, whilst the right hand side is $\Hom_{\CC}(V/\mathfrak{h}V, \CC)$ by Lemma \ref{prop:formal-fcn}.
\end{proof}

Note that the \emph{automatic continuity} for $\mathfrak{h} \subset \mathfrak{g}$ discussed in \cite[\S 11.2]{BK14} is still unreachable by these results.

\begin{corollary}
	For every SAF representation $\pi$ of $G(\R)$ we have $\dim_{\CC} \EuScript{N}_\pi < +\infty$, and $\mathfrak{h}V_\pi$ is of finite codimension in $V_\pi$.
\end{corollary}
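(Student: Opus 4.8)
The plan is to deduce both assertions from Proposition~\ref{prop:Hom-vs-Loc}, applied to the Harish-Chandra module $V := V_\pi^{K\text{-fini}}$ of $K$-finite vectors; since $\pi$ is SAF, $V$ is indeed a Harish-Chandra module of $G$ and $K$ (admissible, finitely generated over $U(\mathfrak{g})$, locally $\mathcal{Z}(\mathfrak{g})$-finite; cf.\ \cite{BK14} and Example~\ref{eg:K-admissible}~(iii)). First I would note that the Proposition gives $\dim_{\CC}\Hom_{\CC}(V/\mathfrak{h}V, \CC) < \infty$, and that a vector space whose full algebraic dual is finite-dimensional is itself finite-dimensional; hence $\dim_{\CC} V/\mathfrak{h}V < \infty$. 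More generally, for any point $x\in Z(\R)$ the stabilizer $\Stab_G(x)$ is $G(\CC)$-conjugate to $H$, hence spherical, so the same Proposition yields $\dim_{\CC} V/(\Lie\Stab_G(x)\otimes_\R\CC)\,V < \infty$ for every such $x$.

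For the finiteness of $\EuScript{N}_\pi$ I would use Frobenius reciprocity. The real points $Z(\R)$ have finitely many connected components, each of which is a single $G(\R)$-orbit because the orbit map $G(\R)\to Z(\R)$ is a submersion ($Z$ being homogeneous, already over $\R$). Choose base points $x_1,\dots,x_r$, one per component, and set $\mathfrak{h}_j := \Lie\Stab_G(x_j)\otimes_\R\CC$. For $\eta\in\EuScript{N}_\pi$ and $v\in V$, the function $\eta(v)\in C^\infty(Z(\R))$ satisfies $\eta(Xv)=X\cdot\eta(v)$ for $X\in\mathfrak{g}$, so evaluation at $x_j$ annihilates $\mathfrak{h}_j V$ and defines a functional on $V/\mathfrak{h}_j V$. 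The resulting map
\[
	\EuScript{N}_\pi \longrightarrow \bigoplus_{j=1}^{r}\Hom_{\CC}\!\left(V/\mathfrak{h}_j V,\ \CC\right),\qquad \eta \longmapsto \left(\, v+\mathfrak{h}_j V\mapsto \eta(v)(x_j)\,\right)_{j}
\]
is injective: if all $\eta(v)(x_j)$ vanish, then by $G(\R)$-equivariance $\eta(v)$ vanishes on $x_j G(\R)$, hence on the whole component, for every $v\in V$; as $V$ is dense in $V_\pi$ and $\eta$ is continuous, $\eta=0$. Combined with the first paragraph this gives $\dim_{\CC}\EuScript{N}_\pi<\infty$.

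For the last assertion I would pass from $V$ to $V_\pi$ by density. Since $\mathfrak{h}V\subseteq\mathfrak{h}V_\pi$ and $\dim V/\mathfrak{h}V<\infty$, the image of $V$ in the Hausdorff quotient $V_\pi/\overline{\mathfrak{h}V_\pi}$ is simultaneously dense and contained in a subspace of dimension $\leq\dim V/\mathfrak{h}V$, so $\overline{\mathfrak{h}V_\pi}$ has finite codimension in $V_\pi$. The main obstacle, which I expect to be the only nontrivial point, is to remove the closure, that is, to see that $\mathfrak{h}V_\pi$ is already closed (equivalently $V+\mathfrak{h}V_\pi=V_\pi$): this is precisely the automatic-continuity phenomenon which, as remarked just above, is not accessible by the $\mathscr{D}$-module methods of this paper, and I would instead invoke Casselman--Wallach globalization theory in the form of \cite{AGKL16} to conclude that $\mathfrak{h}V_\pi=\overline{\mathfrak{h}V_\pi}$ and hence has finite codimension in $V_\pi$.
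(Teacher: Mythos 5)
Your argument for $\dim_{\CC}\EuScript{N}_\pi<\infty$ follows the paper's route: the injection $\EuScript{N}_\pi\hookrightarrow\bigoplus_j\Hom_{\CC}(V/\mathfrak{h}_j V,\CC)$ via evaluation at the base points $x_j$ is exactly Frobenius reciprocity $\EuScript{N}_\pi\simeq\bigoplus_j\Hom_{H_j(\R)}(\pi,\CC)$ (which is what the paper's proof writes out) followed by restriction to the dense subspace $V$ of $K$-finite vectors, and the target is finite-dimensional by Proposition~\ref{prop:Hom-vs-Loc}. Be aware, when you compare with the printed proof, that the inclusion one actually uses is $\Hom_{H_j(\R)}(\pi,\CC)\hookrightarrow\Hom_{\CC}(V/\mathfrak{h}_j V,\CC)$ (the easy direction, injective by density and continuity), and that the citation of Lemma~\ref{prop:formal-fcn} there is evidently meant to be Proposition~\ref{prop:Hom-vs-Loc}.

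For the second clause you are more thorough than the paper: the printed proof only visibly treats $\EuScript{N}_\pi$ and says nothing about why $\mathfrak{h}V_\pi$ has finite codimension, while the remark immediately preceding the corollary concedes that automatic continuity is out of reach for the paper's $\mathscr{D}$-module methods. Your reduction is correct: first that $\overline{\mathfrak{h}V_\pi}$ has finite codimension (the image of $V$ in the Hausdorff quotient $V_\pi/\overline{\mathfrak{h}V_\pi}$ is dense and lies in a finite-dimensional, hence closed, subspace, so equals the whole quotient), and then that $\mathfrak{h}V_\pi=\overline{\mathfrak{h}V_\pi}$ is precisely the automatic-continuity/Hausdorffness statement, for which one must import \cite{AGKL16} or \cite[\S 11.2]{BK14}. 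Making that external input explicit is an improvement on the paper's terse proof; it is consistent with the introduction's acknowledgment that these results ``have been proven in stronger forms'' in \cite{AGKL16}.
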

\begin{proof}
	Put $V := V_\pi^{K\text{-fini}}$. We have $\Hom_{\CC}(V/\mathfrak{h}V, \CC) \subset \Hom_{H(\R)}(\pi, \CC)$ where the right hand side indicates the continuous $\Hom$. By Frobenius reciprocity in this setting, $\Hom_{H(\R)}(\pi , \CC) \simeq \Hom_{G(\R)}(\pi, C^\infty(H(\R) \backslash G(\R)))$. Write $Z(\R) = \bigsqcup_{i=1}^r x_i G(\R)$ with $H_i := \Stab_G(x_i)$, then
	\[ C^\infty(Z(\R)) = \bigoplus_{i=1}^r C^\infty(H_i(\R) \backslash G(\R)). \]
	
	Lemma \ref{prop:formal-fcn} applied to $H = H_1, \ldots, H_r$ gives $\dim \EuScript{N}_\pi < +\infty$.
\end{proof}

Let us turn to the functions $\eta(v) \in C^\infty(Z(\R))$ for $\eta \in \EuScript{N}_\pi$ and $v \in V_\pi$. They are called the \emph{generalized matrix coefficients} of $\pi$.


\begin{proposition}\label{prop:gen-coeff-admissible}
	For every $v \in V_\pi^{K\mathrm{-fini}}$ and $\eta \in \EuScript{N}_\pi$, the function $\eta(v)$ on $Z(\R)$ is $K$-admissible.
\end{proposition}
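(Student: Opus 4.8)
The plan is to realize $\eta(v)$ as a section of (indeed, a generator of a $\mathscr{D}_Z$-submodule of the $\mathscr{D}_Z$-module generated by) a $K$-admissible $D_Z$-module, and then appeal directly to Definition~\ref{def:admissible-distribution}. Put $V := V_\pi^{K\mathrm{-fini}}$, which is a Harish-Chandra module for the pair $(\mathfrak{g}, K)$; in particular $V$ is finitely generated over $U(\mathfrak{g})$ and locally $\mathcal{Z}(\mathfrak{g})$-finite. Recall from \S\ref{sec:monodromic} that $C^\infty(Z(\R))$ is a $D_Z$-module via differential operators, and that the composite $U(\mathfrak{g}) \to D_Z \to \End_{\CC}\bigl(C^\infty(Z(\R))\bigr)$ is precisely the differentiated regular representation; hence the restriction $\eta|_V \colon V \to C^\infty(Z(\R))$ is a homomorphism of $(\mathfrak{g}, K)$-modules for the regular action on the target, and $\eta(v)$ makes sense for every $v \in V$.

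Let $W := \eta(V) \subset C^\infty(Z(\R))$ be the image. As a $(\mathfrak{g},K)$-quotient of the Harish-Chandra module $V$, it is again a Harish-Chandra module; as a submodule of $C^\infty(Z(\R))$ it is a $D_Z$-submodule, finitely generated over $D_Z$ because $V$ is finitely generated over $U(\mathfrak{g})$ and $U(\mathfrak{g}) \to D_Z$, and locally $\mathcal{Z}(\mathfrak{g})$-finite because $V$ is. By Weyl's unitarian trick (Remark~\ref{rem:unitarian}) the regular $K(\R)$-action on $W$ extends to a locally finite algebraic $K(\CC)$-representation, so Example~\ref{eg:function-space} equips the $\mathscr{D}_{Z_{\CC}}$-module $\mathscr{D}_Z \cdot W$ with a $K$-equivariant structure. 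Thus $W$ is a $K$-admissible $D_Z$-module in the sense of Definition~\ref{def:K-admissible}, with trivial character $\chi$; equivalently, this is Example~\ref{eg:K-admissible}(i), or one may take $M := D_Z \dotimes{U(\mathfrak{g})} V$, invoke Example~\ref{eg:K-admissible}(iii), and use the surjection $\Loc_Z(V) \twoheadrightarrow \mathscr{D}_Z \cdot W$ induced by $\eta|_V$.

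To conclude: $\eta(v)$ is a $C^\infty$-function on $Z(\R)$ lying in $W$, so $\mathscr{D}_Z \cdot \eta(v)$ is a $\mathscr{D}_Z$-submodule of $\mathscr{D}_Z \cdot W$, the $\mathscr{D}_Z$-module generated by the $K$-admissible $D_Z$-module $W$. Taking $M := W$ and $\mathcal{N} := \mathscr{D}_Z \cdot \eta(v)$ together with the tautological isomorphism fulfils all requirements of Definition~\ref{def:admissible-distribution}, so $\eta(v)$ is $K$-admissible. Note that we neither claim nor need that $\mathcal{N}$ is $K$-stable --- Definition~\ref{def:admissible-distribution} only asks that $\mathscr{D}_Z \cdot \eta(v)$ be a subquotient of $\mathcal{M}$ in $\mathscr{D}_Z\dcate{Mod}$.

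The argument is essentially bookkeeping, so there is no serious obstacle; the one point that must be handled with care is the compatibility of the $U(\mathfrak{g})$-module structure on $V$ (defining the Harish-Chandra module) with the $D_Z$-module structure on $C^\infty(Z(\R))$ through $U(\mathfrak{g}) \to D_Z$. This is exactly what guarantees that $W = \eta(V)$ is a genuine $D_Z$-submodule of $C^\infty(Z(\R))$ and hence that $\mathscr{D}_Z \cdot \eta(v) \subseteq \mathscr{D}_Z \cdot W$; the finite generation of $W$ over $D_Z$ is then secured by routing it through $U(\mathfrak{g})$-finite generation of $V$, and the remaining properties come verbatim from the cited examples.
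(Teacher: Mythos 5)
Your overall strategy --- realize $\eta(v)$ inside a subquotient of a $K$-admissible module, either via Examples~\ref{eg:function-space}, \ref{eg:K-admissible}(i) or via Examples~\ref{eg:localization}, \ref{eg:K-admissible}(iii) --- matches the paper's proof, which sketches exactly these two routes. But the first route, which you develop in detail and on which your final two paragraphs rest, contains a genuine error: you assert that $W := \eta(V)$ is a $D_Z$-submodule of $C^\infty(Z(\R))$ and take $M := W$ in Definition~\ref{def:admissible-distribution}. This is false. Since $\eta|_V$ is $U(\mathfrak{g})$-equivariant and the infinitesimal regular action on $C^\infty(Z(\R))$ factors through $U(\mathfrak{g}) \to D_Z$, the image $W$ is only a $U(\mathfrak{g})$-submodule; $D_Z$ is in general strictly larger than the image of $U(\mathfrak{g})$ (it contains, e.g., multiplication by arbitrary regular functions on $Z$), and $W$ is not closed under these. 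Hence $W$ is not a $D_Z$-module and cannot play the role of the $K$-admissible $D_Z$-module $M$ in Definition~\ref{def:K-admissible} or~\ref{def:admissible-distribution}; the remark at the end that compatibility of $U(\mathfrak{g}) \to D_Z$ ``guarantees that $W = \eta(V)$ is a genuine $D_Z$-submodule'' is a misreading of what that compatibility actually buys.

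The second alternative you mention in passing is the paper's argument and does work: take $M := D_Z \dotimes{U(\mathfrak{g})} V$, a genuine $D_Z$-module which is $K$-admissible by Example~\ref{eg:K-admissible}(iii); then $P \otimes v \mapsto P\,\eta(v)$ is a well-defined $\mathscr{D}_Z$-module homomorphism $\Loc_Z(V) \to \mathscr{C}^\infty_{Z(\R)}$, so $\mathscr{D}_Z \cdot \eta(v)$ is a submodule of its image and therefore a subquotient of $\Loc_Z(V)$. To repair the first route, one should apply Example~\ref{eg:K-admissible}(i) not to $W$ but to a finite-dimensional $K(\R)$- and $\mathcal{Z}(\mathfrak{g})$-stable subspace of $C^\infty(Z(\R))$ containing $\eta(v)$ (which exists since $v$ is $K$-finite and $V$ is locally $\mathcal{Z}(\mathfrak{g})$-finite); the $K$-admissible $D_Z$-module is then the one generated by that subspace under $D_Z$. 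The conclusion is correct, but only the localization alternative, as you state it, is sound as written.
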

\begin{proof}
	This can be seen in two ways. Either apply Example \ref{eg:K-admissible} (i) together with Remark \ref{rem:unitarian} to see that $\eta(v)$ generates a $K$-admissible $D_Z$-module, or apply (iii) to see that $\mathscr{D}_Z \dotimes{U(\mathfrak{g})} V_\pi^{K\text{-fini}}$ is a $K$-admissible $\mathscr{D}_Z$-module and note that each $\eta \in \EuScript{N}_\pi$ induces a well-defined homomorphism
	\[ \mathscr{D}_Z \dotimes{U(\mathfrak{g})} V_\pi^{K\text{-fini}} \to \mathscr{C}^\infty_{Z(\R)}, \quad P \otimes v \mapsto P \eta(v) \]
	between $\mathscr{D}_Z$-modules.
\end{proof}

By combining Theorem \ref{prop:growth-estimate} and Proposition \ref{prop:gen-coeff-admissible}, it is possible to deduce the estimate below for generalized matrix coefficients. Recall from \cite[p.51]{BK14} that a continuous semi-norm $q: V_\pi \to \R_{\geq 0}$ is called \emph{$G$-continuous} if $G(\R) \times V_\pi \to V_\pi$ is continuous with respect to the $q$-topology on $V_\pi$.

\begin{theorem}\label{prop:coeff-estimate}
	Let $\eta \in \EuScript{N}_\pi$. There exist
	\begin{compactitem}
		\item a function $w: Z(\R) \to \R_{\geq 1}$ as in Lemma \ref{prop:weight},
		\item a $G$-continuous semi-norm $q: V_\pi \to \R_{\geq 0}$,
	\end{compactitem}
	such that $|\eta(v)(x)| \leq w(x) q(v)$ for all $v \in V_\pi$ and $x \in Z(\R)$. They depend on $\pi$ and $\eta$.
\end{theorem}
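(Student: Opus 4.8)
The plan is to derive the estimate directly from the moderate growth property that is part of the definition of an SAF representation, rather than from the $\mathscr{D}$-module machinery. Combining Theorem~\ref{prop:growth-estimate} with Proposition~\ref{prop:gen-coeff-admissible} does show that for $K$-finite $v$ the coefficient $\eta(v)$ has moderate growth at infinity, but the implicit exponent there depends on $v$, so that route does not produce the single semi-norm $q$ valid for all $v \in V_\pi$ that the statement requires. Note also that neither sphericity of $Z$ nor the hypothesis $K = G^\theta$ will be used.

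First I would replace $Z(\R)$ by a compact set of base points. Being a real algebraic variety, $Z(\R)$ has finitely many connected components; each is open under $G(\R)^\circ$, hence a single $G(\R)^\circ$-orbit. Choose representatives $x_1, \dots, x_r$ and put $H_i := \Stab_{G(\R)^\circ}(x_i)$. The finite set $\{x_1, \dots, x_r\}$ is compact, so $f \mapsto \max_i |f(x_i)|$ is a continuous semi-norm on $C^\infty(Z(\R))$ (with the topology of \cite[\S 4.1]{Li18}); by continuity of $\eta\colon V_\pi \to C^\infty(Z(\R))$ there is a continuous semi-norm $p_0$ on $V_\pi$ with $\max_i |\eta(v)(x_i)| \le p_0(v)$ for all $v$.

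Next I would invoke equivariance: since $\eta(v)(xg) = \eta(\pi(g)v)(x)$, writing an arbitrary $x \in Z(\R)$ as $x = x_i g$ with $g \in G(\R)^\circ$ gives $|\eta(v)(x)| = |\eta(\pi(g)v)(x_i)| \le p_0(\pi(g)v)$. Applying the moderate growth of $\pi$ \cite{BK14} to $p_0$ furnishes a $G$-continuous semi-norm $q$ and an integer $N \ge 0$ with $p_0(\pi(g)v) \le \|g\|^N q(v)$ for all $g, v$ (if one only obtains a merely continuous $q$, enlarge it to a $G$-continuous one dominating it). Taking the infimum over all $g \in G(\R)^\circ$ with $x_i g = x$, i.e.\ over a fixed coset $H_i g_0$, yields
\[ |\eta(v)(x)| \;\le\; \Bigl( \inf_{h \in H_i} \|h g_0\| \Bigr)^{N} q(v). \]

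It remains to compare $\inf_{h \in H_i} \|h g_0\|$ with the weight function $w$ of Lemma~\ref{prop:weight}. By the construction in that proof, $w(x_i g_0) = \exp\bigl(d(H_i, g_0)\bigr) = \inf_{h \in H_i}\|h^{-1} g_0\|_{\max} = \inf_{h \in H_i}\|h g_0\|_{\max}$, using left-invariance of the metric and that $H_i$ is a group; by the power-equivalence of $\|\cdot\|$ and $\|\cdot\|_{\max}$ on $G(\R)^\circ$ \cite[Lemma~2.1]{BK14}, with constants uniform over the finitely many components, we obtain $\inf_{h \in H_i}\|h g_0\| \sim w(x)$ in the sense of Definition~\ref{def:p-equivalent}. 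Hence $|\eta(v)(x)| \le C\, w(x)^{aN} q(v)$ for suitable $C, a > 0$; replacing $w$ by $C\, w^{aN}$ — still a weight function in the sense of Lemma~\ref{prop:weight} by the remark following it, and still valued in $\R_{\ge 1}$ once $C$ is taken $\ge 1$ — gives exactly $|\eta(v)(x)| \le w(x) q(v)$. I expect no serious difficulty here: the argument is bookkeeping of power-equivalences, and the only delicate points are that the semi-norm from moderate growth may be chosen $G$-continuous and the final comparison of the intrinsic Riemannian weight on $H_i \backslash G(\R)^\circ$ with the infimum of $\|\cdot\|$ over a fibre of $G(\R)^\circ \to Z(\R)$.
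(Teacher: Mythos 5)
Your proposal is correct, and it follows precisely the route the paper itself indicates in the discussion after Corollary \ref{prop:polar-estimate}: the paper says the estimate ``can be deduced more directly from the notion of SAF representations: they are of moderate growth,'' and then omits the proof, so your argument is essentially the intended (but unwritten) proof. The structure is sound: reduce to finitely many base points $x_1, \dots, x_r$ and use continuity of $\eta$ to get a continuous semi-norm $p_0$ at those points; transfer to arbitrary $x = x_i g$ via equivariance; invoke moderate growth to get $p_0(\pi(g)v) \le \|g\|^N q(v)$ with $q$ $G$-continuous (the cofinality of $G$-continuous semi-norms for SAF representations justifies the enlargement you allude to); take the infimum over the fibre $H_i g_0$; and compare $\inf_{h \in H_i} \|hg_0\|$ with the intrinsic weight $w(x) = \inf_{h\in H_i}\|hg_0\|_{\max}$ via the power-equivalence $\|\cdot\|\sim\|\cdot\|_{\max}$ of \cite[Lemma 2.1]{BK14}, which passes through infima over the fibre since $t \mapsto t^a$ is increasing. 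The final replacement of $w$ by a constant multiple of a positive power preserves properties (i), (ii) and the $\|g\|^{N}$-submultiplicativity of Lemma \ref{prop:weight}, so the conclusion stands. Your opening observation is also well taken: combining Theorem \ref{prop:growth-estimate} with Proposition \ref{prop:gen-coeff-admissible} controls only $K$-finite vectors, with an exponent that a priori depends on the vector, so that route alone does not yield the uniform semi-norm $q$ on all of $V_\pi$; the moderate-growth route is the one that gives the Fréchet-uniform statement.
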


Using the theory of toroidal embeddings \cite[\S 7]{KK16} together with Łojasiewicz's inequality, this can be upgraded to an estimate in terms of the weak polar decomposition \cite[\S 13]{KK16}. Some further definitions are in order.

Fix a minimal parabolic $\R$-subgroup $P \subset G$, a Levi component $M_P$ of $P$, and let $A$ be the maximal split central torus in $M_P$. Using the local structure theorems for $Z$ (see \cite[4.6 Corollary and (4.15)]{KK16}), one attaches to $Z$ an affine smooth subvariety $Z_{\mathrm{el}} \subset Z$ (the \emph{elementary kernel}), $Z_{\mathrm{el}}(\R) \neq \emptyset$, on which $A$ acts with kernel $A_0$. Set $A_Z := A/A_0$ which acts freely on $Z_{\mathrm{el}}$. We may take $x_0 \in Z_{\mathrm{ell}}(\R)$.

In \cite[(10.9)]{KK16} is defined the set of simple restricted spherical roots $\Sigma_{\R}(Z) \subset X^*(A)$. Following \cite[(13.1), (13.5)]{KK16} we define
\begin{align*}
	A_Z(\R)^- & := \left\{ a \in A_Z(\R) : \forall \sigma \in \Sigma_{\R}(Z), \; |\sigma(a)| \leq 1 \right\}, \\
	\mathfrak{a}_Z^* & := X^*(A_Z) \otimes \R.
\end{align*}

For all $\lambda = \sum_i \lambda_i \otimes t_i \in \mathfrak{a}_Z^*$ and $a \in A_Z(\R)$, we write $|a|^\lambda := \prod_i |\lambda_i(a)|^{t_i}$, which is well-defined.


Also needed is the affine group of $\R$-central automorphisms $\mathfrak{A} := \mathfrak{A}_{\R}(Z)$ acting on the \emph{left} of $Z$; see \cite[(8.5)]{KK16}. Its identity connected component $\mathfrak{A}^\circ$ is a split torus embedded in $A_Z$. Given an SAF representation $\pi$, note that $\mathfrak{A}(\R)$ acts linearly and continuously on $\EuScript{N}_\pi$ by
\[ (a \eta)(v)(x) = \eta(v)(a^{-1}x), \quad a \in \mathfrak{A}(\R), \; v \in V_\pi, \; x \in Z(\R). \]
The eigen-embeddings in $\EuScript{N}_\pi$ are defined to be the eigenvectors under $\mathfrak{A}^\circ(\R)$.

\begin{corollary}\label{prop:polar-estimate}
	Let $\pi$ be an SAF representation and $\eta \in \EuScript{N}_\pi$ be an eigen-embedding. For any closed subanalytic subset $\Omega \subset G(\R)$, there exist $\lambda \in \mathfrak{a}_Z^*$ and a $G$-continuous semi-norm $q: V_\pi \to \R_{\geq 0}$, both depending on $(\pi, \eta)$, such that
	\[ \left| \eta(v)(x_0 a\omega) \right| \leq |a|^{\lambda} q(v), \quad a \in A_Z(\R)^-, \; \omega \in \Omega. \]
\end{corollary}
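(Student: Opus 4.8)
The plan is to deduce this from the weight–function estimate of Theorem~\ref{prop:coeff-estimate} by controlling the growth of a weight function along the polar directions $x_0 A_Z(\R)^-$, using the toroidal compactification of $Z$ and Łojasiewicz's inequality. First I would fix, as furnished by Theorem~\ref{prop:coeff-estimate}, a function $w\colon Z(\R)\to\R_{\geq 1}$ satisfying the conclusions of Lemma~\ref{prop:weight} together with a $G$-continuous semi-norm $q_0$ such that $|\eta(v)(x)|\leq w(x)\,q_0(v)$ for all $v$ and $x$. It then suffices to produce $\mu\in\mathfrak{a}_Z^*$ and $C>0$ with $w(x_0 a\omega)\leq C\,|a|^{\mu}$ for all $a\in A_Z(\R)^-$ and $\omega\in\Omega$, since $Cq_0$ is absorbed into a new $G$-continuous semi-norm and then $\lambda:=\mu$ works. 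The $\omega$-variable is removed first: either by the equivariance $w(xg)\leq C'\|g\|^N w(x)$ of Lemma~\ref{prop:weight} together with the boundedness of $\|\cdot\|$ on $\Omega$, or by writing $\eta(v)(x_0 a\omega)=\eta(\pi(\tilde a\,\omega)v)(x_0)$ with $\tilde a\in A(\R)$ a lift of $a$ and invoking the moderate growth of $\pi$ to bound $q_0(\pi(\omega)v)$ uniformly for $\omega\in\Omega$ by a $G$-continuous semi-norm in $v$. Either way one reduces to bounding $w(x_0 a)$ for $a\in A_Z(\R)^-$.

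Before treating the cone $A_Z(\R)^-$ geometrically, I would split off its edge, which contains the image of $\mathfrak{A}^\circ(\R)$ under $\mathfrak{A}^\circ\hookrightarrow A_Z$. By hypothesis $\eta$ is an eigen-embedding, so $\eta(v)(b^{-1}x)=\chi_\eta(b)\,\eta(v)(x)$ for $b\in\mathfrak{A}^\circ(\R)$ and a fixed character $\chi_\eta$; regarding $\chi_\eta$ (up to sign) as an element of $\mathfrak{a}_Z^*$, the $\mathfrak{A}^\circ(\R)$-component of $a$ contributes exactly a factor of the form $|a|^{\chi_\eta}$ to $|\eta(v)(x_0 a)|$ and may be pulled out, reducing matters to $w(x_0 a')$ with $a'$ ranging over a complementary pointed subcone. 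This is the only place the eigen-embedding assumption is used, and it is genuinely needed, since $w$ itself carries no decay in the $\mathfrak{A}^\circ$-direction.

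The main step is to bound $w(x_0 a')$ over the pointed cone. I would pass to a smooth toroidal compactification $Z\hookrightarrow\overline{Z}$ in the sense of \cite[\S 7]{KK16}, with $\overline{Z}(\R)$ compact, whose boundary divisors are indexed by the simple restricted spherical roots $\Sigma_{\R}(Z)$. By Proposition~\ref{prop:w}, the inverse weight $p:=w^{-1}$ extends to a continuous subanalytic function on $\overline{Z}(\R)$ adapted to $Z\hookrightarrow\overline{Z}$, hence vanishes precisely on $(\overline{Z}\smallsetminus Z)(\R)$. Covering the closure of $x_0 A_Z(\R)^-$ in $\overline{Z}(\R)$ by finitely many charts in which the normal–crossing boundary is a coordinate hyperplane arrangement and in which, by the local structure theorem of \cite{KK16}, the distance to the $\sigma$-boundary divisor along $x_0 A_Z(\R)^-$ is power-equivalent to $|\sigma(a)|$, Łojasiewicz's inequality (Theorem~\ref{prop:inequality}, applied on the compact subanalytic manifold $\overline{Z}(\R)$) yields $p(x_0 a)\geq c\prod_{\sigma\in\Sigma_{\R}(Z)}|\sigma(a)|^{n_\sigma}$ for suitable $n_\sigma\geq 0$, i.e.\ $w(x_0 a)\leq C\,|a|^{-\lambda_0}$ with $\lambda_0=\sum_\sigma n_\sigma\sigma$ in the cone spanned by $\Sigma_{\R}(Z)$. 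Combining with the preceding paragraph gives the corollary with $\lambda:=\chi_\eta-\lambda_0\in\mathfrak{a}_Z^*$, and with $q$ the $G$-continuous semi-norm assembled along the way.

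The hardest part is the geometric input of this last step: one must use the local structure theorem and the toroidal–embedding theory of \cite{KK16} to ensure that \emph{one and the same} compactification $\overline{Z}$ records all directions in which $x_0 A_Z(\R)^-$ escapes to infinity, so that a single application of Łojasiewicz controls the whole cone uniformly rather than only near an individual boundary stratum, and to make the comparison between boundary distances and the $|\sigma(a)|$ precise and uniform. The accompanying bookkeeping between the edge of $A_Z(\R)^-$, the image of $\mathfrak{A}^\circ$, and the complementary pointed cone must also be carried out with care so that the resulting $\lambda$ genuinely lies in $\mathfrak{a}_Z^*$.
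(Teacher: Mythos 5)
The paper actually \emph{omits} the proofs of both Theorem~\ref{prop:coeff-estimate} and Corollary~\ref{prop:polar-estimate}, remarking only that the crude estimate is a direct consequence of the moderate growth of SAF representations and pointing to \cite{KKS17,BDKS17} for sharper results; the one proof hint offered is exactly the route you take, namely toroidal embeddings plus Łojasiewicz. So there is no printed argument to compare against, and your proposal is a reasonable reconstruction of what the author had in mind, routed through the weight function of Theorem~\ref{prop:coeff-estimate} rather than directly through moderate growth.

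Two substantive cautions. First, the hypothesis ``closed subanalytic $\Omega \subset G(\R)$'' must be read as ``compact.'' With $\Omega = G(\R)$ and $a = 1$ the claimed inequality would force $\eta(v)$ to be bounded on the orbit $x_0 G(\R)$, which is false in general. Both of your proposed mechanisms for removing the $\omega$-variable --- boundedness of $\|\cdot\|$ on $\Omega$, and uniform control of $q_0(\pi(\omega)v)$ via moderate growth --- already presuppose compactness of $\Omega$, so you should say so explicitly rather than leave it implicit. Second, the order of operations in the last step is not optional but forced, and this deserves to be front and centre. Applying Theorem~\ref{prop:inequality} in the form $\prod_\sigma |\sigma(a)|^{n_\sigma} \preccurlyeq p(x_0 a)$ on the closure $E$ of $x_0 A_Z(\R)^-$ in $\overline{Z}(\R)$ requires the zero-set inclusion $p^{-1}(0)\cap E \subset \bigl(\prod_\sigma |\sigma|\bigr)^{-1}(0)\cap E$. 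Along the lineality part of the cone $A_Z(\R)^-$ --- which is, up to a compact factor, the image of $\mathfrak{A}^\circ(\R)$ --- every $|\sigma(a)|$ stays equal to $1$ while $p(x_0 a)\to 0$ as $a \to \infty$, so the inclusion fails on the full cone. Splitting off the $\mathfrak{A}^\circ$-factor and passing to a complementary pointed subcone is therefore a \emph{prerequisite} for Łojasiewicz, not merely a convenience; the eigen-embedding hypothesis is precisely what lets the discarded factor be absorbed into the exponent $\lambda$. You acknowledge this, but it should be stated as the reason the split is needed rather than as an afterthought.

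One smaller gap: in the alternative route that writes $\eta(v)(x_0 a\omega) = \eta(\pi(\tilde a\omega)v)(x_0)$, the lift $\tilde a \in A(\R)$ of $a \in A_Z(\R) = (A/A_0)(\R)$ is not canonical, and $\|\tilde a\|$ is not controlled by $a$ alone when $A_0$ has positive dimension; one must fix a bounded (or suitably subanalytic) section of $A(\R) \twoheadrightarrow A_Z(\R)$, or simply stick with the weight-function route of Lemma~\ref{prop:weight}, which sidesteps the issue. With these points tightened, the argument is sound and consistent with the paper's remarks.
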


This is comparable to \cite[Theorem 7.2]{KKS17}. However, the crude estimate above can be deduced more directly from the notion of SAF representations: they are of \emph{moderate growth}. In \textit{loc.\ cit.}, one obtains the optimal exponent $\lambda$, and the approach thereof can be extended to all real spherical homogeneous spaces; see \cite{BDKS17}. For this reason, the proofs of both Theorem \ref{prop:coeff-estimate} and Corollary \ref{prop:polar-estimate} are omitted here.

\bibliographystyle{abbrv}
\bibliography{Regularity}


\vspace{1em}
\begin{flushleft} \small
	Beijing International Center for Mathematical Research / School of Mathematical Sciences, Peking University. No.\ 5 Yiheyuan Road, Beijing 100871, People's Republic of China. \\
	E-mail address: \href{mailto:wwli@bicmr.pku.edu.cn}{\texttt{wwli@bicmr.pku.edu.cn}}
\end{flushleft}

\end{document}